\theoremstyle{definition}
\newtheorem{theorem}{Theorem}[section]
\newtheorem{corollary}[theorem]{Corollary}
\newtheorem{proposition}[theorem]{Proposition}
\newtheorem{lemma}[theorem]{Lemma}
\newtheorem{remark}[theorem]{Remark}
\newtheorem{example}[theorem]{Example}
\numberwithin{equation}{section}
\newcommand\R{\mathbb{R}}
\newcommand\Z{\mathbb{Z}}
\renewcommand\S{\mathbb{S}}
\newcommand{\id}{\operatorname{id}}
\newcommand{\Id}{{\mathbb{1}}}
\newcommand{\Sing}{\operatorname{sing}}
\newcommand{\Dist}{\operatorname{dist}}
\newcommand{\diam}{\operatorname{diam}}
\newcommand{\M}{\mathcal{M}}
\newcommand{\D}{\mathcal{D}}
\renewcommand{\P}{\text{P}}
\newcommand{\Q}{\text{Q}}
\newcommand{\DiOs}{{\mathcal{\tilde D}}_{tg}}
\newcommand{\Di}[1]{\mathcal{D}_{\text{#1}}}
\newcommand{\SO}{\text{SO}}
\renewcommand{\O}{\text{O}}
\newcommand{\SPD}{\text{SPD}}
\newcommand{\dist}{{\bf{d}}}
\newcommand{\n}{{\bf{n}}}
\renewcommand{\v}{{\bf{v}}}
\newcommand{\tr}{{\mathrm{tr}}}
\newcommand{\W}{W}
\newcommand{\Cof}{{\operatorname{Cof}}}
\newcommand{\vol}{\operatorname{vol}}
\newcommand{\supp}{\operatorname{supp}}
\newcommand{\im}{\operatorname{im}}
\newcommand{\dd}{{\operatorname{d}}}
\newcommand{\cof}{\textrm{Cof\,}}
\newcommand\restr[2]{{
\left.\kern-\nulldelimiterspace #1 \vphantom{\big|} \right|_{#2} 
}}
\newcommand{\eg}{e.g.,\ }
\newcommand{\ie}{i.e.,\ }
\newcommand{\etal}{et al.\ }
\newcommand{\notinclude}[1]{}
\newcommand{\added}[2][]{%
  \ifthenelse{\equal{#1}{}}{{#2}}{{#2}}%
}
\begin{document}

\title{Shape Aware Matching of Implicit Surfaces based on Thin Shell Energies}

\author{Jos\'{e} A. Iglesias}
\address[Jos\'{e} A. Iglesias]{Computational Science Center, University of Vienna\\
Oskar-Morgenstern-Platz 1, 1090 Vienna, Austria}
\email{jose.iglesias@univie.ac.at}

\author{Martin Rumpf}
\address[Martin Rumpf]{Institute for Numerical Simulation, Universit\"{a}t Bonn\\
Endenicher Allee 60, 53115 Bonn, Germany}
\email{martin.rumpf@uni-bonn.de}

\author{Otmar Scherzer}
\address[Otmar Scherzer]{Computational Science Center, University of Vienna\\
Oskar-Morgenstern-Platz 1, 1090 Vienna, Austria; RICAM, Austrian Academy of Sciences\\
Altenberger Str. 69, 4040 Linz, Austria}
\email{otmar.scherzer@univie.ac.at}

\begin{abstract}
A shape sensitive, variational approach for the matching of surfaces considered as thin elastic shells is investigated.
The elasticity functional to be minimized takes into account two different types 
of nonlinear energies: 
a membrane energy measuring the rate of tangential distortion
when deforming the reference shell into the template shell, 
and a bending energy measuring the bending under the deformation in terms of the change of the shape 
operators from the undeformed into the deformed configuration.
The variational method applies to surfaces described as level sets.
It is mathematically well-posed and an existence proof of an optimal matching deformation is given.
The variational model is implemented using a finite element discretization combined with a narrow band approach on an efficient hierarchical grid structure.  For the optimization a regularized nonlinear conjugate gradient scheme and a cascadic multilevel strategy are used. 
The features of the proposed approach are studied for synthetic test cases and a collection of geometry processing examples.
\end{abstract}

\subjclass[2000]{Primary 65D18, %(Computer graphics, image analysis, and computational geometry)
Secondary 49J45, %(Methods involving semicontinuity and convergence; relaxation)
74K25% (Shells)
%, 68U05(Computer graphics; computational geometry)
}

\keywords{variational shape matching; implicit surfaces; thin shells; weak lower semicontinuity}

\maketitle

%%%%%%%%%%%%%%%%%%%%%%%%%%%%%%%%%%%%%%%%%%%%%
%%%%%%%%%%%%%%%%%%%%%%%%%%%%%%%%%%%%%%%%%%%%%
%%%%%%%%%%%%%%%%%%%%%%%%%%%%%%%%%%%%%%%%%%%%%
\section{Introduction}
\label{sec:intro}
We present a variational model for the matching of surfaces implicitly represented as level sets. The approach is inspired by the mathematical theory of nonlinear elasticity
of thin shells. The model consists in an energy functional, which is to be minimized among deformations of a computational domain in which two given surfaces are embedded. A minimizer of this functional is a deformation that closely maps one (reference) surface onto the other (template) surface. As the underlying model we consider the reference surface as a thin elastic shell, \ie a layer of an elastic material embedded in a volume of another several orders of magnitude softer isotropic elastic material. Subject to matching forces the volume is deformed in such a way that the thin shell is mapped onto the template surface.
The functional reflects desired phenomena like resistance to compression and expansion of the surface, resistance to bending, and rotational invariance, while solely involving  the deformation and the Jacobian of the deformation.  
The model is formulated in terms of projected derivatives from the tangent space of the reference surface onto the expected tangent space of the template surface.
Taking into account a suitable factorization of the natural pullback under a deformation of shape operators enables us to formulate a model with appropriate convexity properties. The actual surface matching constraint is handled through a penalty, allowing for efficient numerical computation.

Through arguments of compensated compactness, we are able to show weak lower semicontinuity of the energy and consequently existence of minimizing deformations. 
We present a numerical approach based on a multilinear finite element ansatz for the deformation implemented on adaptive octree grids. The resulting discrete energy is minimized in a multiscale fashion applying a regularized gradient descent.

In the conference article \cite{IgBeRuSc13} a preliminary version of this approach was presented. For the functional in that paper lower semicontinuity could not be ensured for either the membrane or bending energies. This lack of lower semicontinuity manifests itself in applications, where compression of the surface is expected, and leads to undesired oscillations in almost-minimizing deformations, which we explore in the present work through explicit examples and computations. Additionally, to increase the efficiency the computational meshes are in the present paper adapted to the surfaces. Consequently the number of degrees of freedom scales asymptotically almost like that of a surface problem.

The main pillar of our modelling is the use of polyconvex energy densities, first introduced in \cite{Bal77}. Energies of this type allow for geometric consistency properties like rotation invariance and the ability to measure area and volume changes. The core insight of this theory is that integrands consisting of convex functions of subdeterminants of the Jacobian give rise to integral functionals that are weakly lower semicontinuous in suitable Sobolev spaces. Indeed, this can be seen as an instance of compensated compactness \cite{Mur87}. A generic polyconvex isotropic energy density of the type used in this work is\begin{equation}\label{eq:genericDensity}
\alpha_p \|A\|^p + \beta_q \|\cof A\|^q + \Gamma(\det A),
\end{equation}
for $\cof A \added{:= \det A\, A^{-T}}$ the cofactor matrix of $A$. Here, the coefficients and the function $\Gamma$ are such that \eqref{eq:genericDensity} attains \added{a local} minimum for $A \in \SO(n)$, \added{that is, for rigid motions. Such an example is provided below in \eqref{def:What}}. Often in the modelling of nonlinear elasticity the condition 
\begin{equation}\label{eq:nonDegCondition}
\lim_{\det A \to 0^+ }\Gamma(A) = +\infty
\end{equation}
is added, to reflect the non-interpenetration of matter \cite{Bal81}. In our model we make use of densities both with and without this property.

%%%%%%%%%%%%%%%%%%%%%%%%%%%%%%
%%%%%%%%%%%%%%%%%%%%%%%%%%%%%%
\paragraph*{\bf Related work.}
Linear elasticity has been extensively used in computer vision and in graphics. Prominent applications are image registration \cite{Mod04, KybUns03, RisPetSam10, KabFraFis06, KabLor10}, optical flow extraction \cite{KeRi05}, and shape modeling \cite{FucJueSchYan09a}. Recently, theories of nonlinear elasticity have been applied in many computer vision and graphics problems such as mesh deformation \cite{ChaPinSanSch10}, shape averaging  \cite{RuWi08},  registration of medical images \cite{BurModRut13}. 
The advantage of nonlinear models is that they allow for intuitive deformations when the displacements are large.
%%%%%%%%%%%%%%%%%%%%%%%%%%%%%%

In this paper, we present a model for nonlinear elastic matching of thin shells.
A finite element method for the discretization of bending energies of biological membranes has been introduced in \cite{BoNoPa10}. Their approach uses quadratic isoparametric finite elements to approximate the interface on which the gradient flow of an elastic energy of Helfrich type is considered. The papers \cite{BrePocWir13, BrePocWir15} discuss accurate convex relaxation of higher order variational problems on curves described as jump sets of functions of bounded variation. In particular, it enables the numerical treatment of elastic energies on such curves.
%%%%%%%%%%%%%%%%%%%%%%%%%%%%%%

One challenge in polyhedral surface processing is to provide consistent notions of curvatures and second fundamental forms, \ie notions that converge (in an appropriate topology or in a measure theoretic sense) to their smooth counterparts, given a smooth limit surface.
One computationally popular model for discretizing the second fundamental form is Grinspun's \etal \emph{discrete shells} model~\cite{GriHirDesSch03}.
Another efficient, and robust method for nonlinear surface deformation and shape matching is PriMo~\cite{BotPauGroKob06}. This approach is based on replacing the triangles of a polyhedral surfaces by thin prisms. During a deformation, these prisms are required to stay rigid, while nonlinear elastic forces are acting between neighboring prisms to account for bending, twisting, and stretching of the surface. We refer to Botsch and Sorkine~\cite{BotSor08} for a discussion of pros and cons for various such methods. In comparison with methods based on polyhedral surfaces, level set approaches like ours are not dependent on specific triangulations of the shapes.
%%%%%%%%%%%%%%%%%%%%%%%%%%%%%%

The matching of surfaces with elastic energies has recently been studied in \cite{WiScSc11}.
Their energy contains a membrane energy depending on the Cauchy-Green strain tensor and a bending-type energy comparing the mean curvatures on the surfaces.
The matching problem is formulated in terms of a binary linear program in the product space of sets of surface patches. For computations, a relaxation approach is used.

A different direction is the use of parametric approaches to reduce shape matching problems to the matching of functions on a fixed domain. For example, the methods presented in \cite{ZhaHer99} and \cite{WanEtal06} are based on conformal maps from the unit disk. A more general variant using conformal maps on surfaces with arbitrary topology is presented in \cite{LiEtal08}. Within the family of parametric methods, a surface matching approach related to ours is presented in \cite{DrLiRuSc05}, where nonlinear elastic energies are used for matching parametrized surface patches. In comparison to all these methods, our level set approach is non-parametric and allows surfaces of any topology, which does not need to be fixed in advance.

%%%%%%%%%%%%%%%%%%%%%%%%%%%%%%
In \cite{SrSaJo09}, face matching based on a matching of corresponding level set curves on the facial surfaces is investigated. To match pairs of curves an optimal deformation between them is computed using an elastic shape analysis of curves. Compared to our approach, this model does not take into account bending dissipation of the curves.

A different direction in shape recognition and matching is exploiting the intrinsic geometry of the surfaces only, thereby producing isometry-invariant methods based on the first fundamental form, like those in \cite{ElaKim03, BroBroKim08}. In comparison, bending is penalized in our model and we use all curvatures of the surfaces and their directions to be able to better match regions of edges and creases correctly.

A method for matching and blending of curves represented by level sets has been presented in \cite{MuRa12}. Thereby, a level set evolution generates an interpolating family of curves, where the associated propagation speed of the level sets depends on differences of level set curvatures. In this class of approaches, geometric evolution problems are formulated, whereas here we focus on variational models for matching deformations. Variational registration of implicit surfaces was also considered in \cite{LeeLai08}, but only through volume elasticity, in contrast to our shell terms.

To summarize, the main novelty of our contribution is the combination of independence of mesh topologies arising from the use of level sets, penalization of tangential distortion in a rotationally-invariant framework, and awareness both of curvatures and curvature directions of the surfaces in the matching. We are not aware of any other methods possessing all of these features simultaneously.

Our approach is inspired by the articles \cite{DelZol94, DelZol95} in which surface PDE models are derived in terms of the signed distance function. Shape warping based on the framework of \cite{DelZol94} has been discussed from a geometric perspective in \cite{CharFauFer05}.
%%%%%%%%%%%%%%%%%%%%%%%%%%%%%%%%%%%%%%%%%%%%%
\paragraph*{\bf Outline.}
The paper is organized as follows. In Section 2, we review the required preliminaries about distance functions and formulate the geometric non-distortion and matching conditions that inspire our model. In Section 3, we present the different contributions to our energy. Section 4 is devoted to proving the existence of minimizing deformations under suitable Dirichlet and Neumann boundary conditions. Furthermore, the strong convergence of solutions for vanishing matching penalty parameter is discussed and counterexamples showing the lack of lower semicontinuity of related simpler models are given. In Section 5 a numerical strategy for minimizing the energy on adaptive octree grids is presented. Finally, Section 6 contains a range of numerical examples demonstrating the behaviour of solutions corresponding to our design criteria, and presents several potential applications.

%%%%%%%%%%%%%%%%%%%%%%%%%%%%%%%%%%%%%%%%%%%%%
\paragraph*{\bf Some useful notation.}
For later usage and the purpose of reference let us collect  some useful notation, mostly introduced in detail in later sections:
\begin{itemize}
\item $|B|$ stands for the Lebesgue measure of $B \subset \R^n$, and $\diam B = \sup_{x,y \in B}|x-y|$ for its diameter.
\item Generic matrices are denoted by $A, B, M, N$.  We use $\Id$ for the identity matrix. 
The set of rotations is denoted by $\O(n)$ and $\SO(n)$ is the set of 
orientation-preserving rotations. The set of all symmetric and positive definite matrices is
$\SPD(n)$. 
\item Components of vectors are denoted with subindices. 
For  $v\in \R^n$, $|v|$ denotes its Euclidean norm. The $(n-1)$-dimensional sphere is $\S^{n-1}$. 
For a matrix $M$, $|M|$ is the Frobenius norm.
\item For two column vectors $v,w \in \R^n$, $v \otimes w$ is the tensor product of $v$ and $w$, that is, the square matrix $v w^T$. 
In particular, if $|w|=1$ we have the identity $(v \otimes w) w = v$.
\item  $\P(e)=\Id - e \otimes e$ is the projection onto vectors orthogonal to $e \in \S^{n-1}$.
\item Deformations on $\R^n$ are denoted by $\phi$, and deformations defined on a \added{hyper}surface $\M \subset \R^n$ by $\varphi$. The identity deformation is \added{denoted by }$\id$.
\item $\Omega \subset \R^n$ denotes the computational domain. Every relevant deformation $\phi$ maps $\Omega$ into $\R^n$.
$\Omega$ has to contain all computationally relevant manifolds $\M$. $\Omega$ has Lipschitz boundary, is open and bounded.
\item We use the notation $\partial_i$ for partial derivatives, $\nabla$ for the gradient of a scalar function, $\D$ for the 
Jacobian matrix of a vector function and $\D^2$ for the Hessian matrix of a scalar function.
\item  $\M_1, \M_2 \subset \Omega$ are $C^{2,1}$ compact hypersurfaces. 
          The inside and outside components of $\Omega \setminus \M_i$ are well defined by the Jordan-Brouwer 
          separation theorem (\cite{GuiPol74}, Chapter 2, Section 5).
          
          The signed distance function to $\M_1, \M_2$ is denoted by $\dist_1, \dist_2$. 
          The sign convention is that $\dist_i$ is negative on the inside of $\M_i$, so that $\dist_i(x)=-\Dist(x, \M_i)$ if $x$ is in the inside component of $\Omega \setminus \M_i$ and $\dist_i(x)=\Dist(x, \M_i)$ otherwise,
          where the distance functions $\Dist(\cdot, \M_i)$, $i=1,2$ are the unique viscosity solutions of $1-|\nabla \Dist(\cdot, \M_i)|=0$ and $\Dist(\cdot, \M_i)=0$ on $\M_i$.
          The normal fields to the offsets of $\M_i$ at a point $x$ are denoted by $\n_i(x):=\nabla \dist_i(x)$.
          A superscript next to $\M_i$ ($i=1,2$), as in $\M_i^c$, denotes that we are talking about a level set of $\dist_i$ with value 
          different from zero, so that $\M_i^c := \dist_i^{-1}(c)$.

          $T_x\M_i^{\dist_i(x)}$ denotes the tangent space to $\M_i^{\dist_i(x)}$ at $x$.
          The outwards normal to $\M_i^{\dist_i(x)}$ is given by $\n_i(x)$, and the set of points where 
          $\dist_i$ is not differentiable is denoted by $\Sing \dist_i$. 

          We use $\mathcal{S}_i = \D^2 \dist_i$ for the Hessian of $\dist_i$, which coincides with an extended shape operator of $\M_i$.
\item $\lambda, \mu$ are the  Lam\'{e} coefficients of an isotropic material in linearized elasticity. 
\item  $C^0(\Omega;\R^n)$ is the space of continuous functions from the domain $\Omega$ to the range $\R^n$, $C^{k,\alpha}$ the H\"{o}lder spaces in which the $k$-th derivative is $\alpha$-H\"{o}lder continuous, including the Lipschitz case $\alpha=1$. The range of the spaces is specified unless it is $\R$. Sobolev spaces are denoted by $W^{1,p}$ and the closure of compactly supported smooth functions in them by $W_0^{1,p}$.
\item The letter $C$ is reserved for a generic positive constant that may have different values in each appearance.
%Similarly, we use the notation e.g. $C(n,Omega)$ to emphasize the dependence of such constants on different parameters.
Sequence indexing is usually denoted by a superscript $k$, and limits by an overline, \eg $\phi^k \to \overline{\phi}$.
\end{itemize}

%%%%%%%%%%%%%%%%%%%%%%%%%%%%%%%%%%%%%%%%%%%%%
%%%%%%%%%%%%%%%%%%%%%%%%%%%%%%%%%%%%%%%%%%%%%
%%%%%%%%%%%%%%%%%%%%%%%%%%%%%%%%%%%%%%%%%%%%%
\section{Deformation and matching of level set \added{hyper}surfaces}
\label{sec:deformation}
We are given two compact, connected embedded hypersurfaces $\M_1,\M_2$ of class $C^{2,1}$, which are diffeomorphic to each other, 
and both of which are contained in a bounded Lipschitz domain $\Omega \subset \R^{n}$. 
In this section we deal with the tangential distortion and the change of the shape operator under a deformation $\phi: \Omega \to \R^n$.

For any $c \in \R$, we denote the $c$-offsets to the \added{hyper}surface $\M_i$ by
$\M_i^c := \{ x \in \Omega \,|\, \dist_i(x) = c \}\,$.
Furthermore, we define the singularity set $\Sing\dist_i$ as the set of points where $\dist_i$ is not \st{twice} differentiable. With the regularity of $\M_i$ that we have assumed, it is well known 
(\eg Theorem 1.1, Corollary 1.3 and Remark 1.4 of \cite{LiNir05}) that $\Sing\dist_i$ has Lebesgue measure zero and $\Dist(\M_i, \Sing \dist_i) > 0$. Furthermore, combining \cite[Theorem 5.6]{DelZol94} and \cite[Proposition 4.6, 7.]{ManMen03} we see that $\dist_i \in C^2(\overline{\Omega} \setminus \overline{\Sing\dist_i})$.

The gradient of the signed distance function $\nabla \dist_i(x)$ is the outward-pointing unit normal $\n_i(x)$ to $\M_i^{\dist_i(x)}$ at a point $x$.
The tangent space to $\M^{\dist_i(x)}_i$ at $x$, denoted by $T_x \M^{\dist_i(x)}_i$, consists of all vectors orthogonal to $\n_i(x)$. 
Then, the corresponding projection matrices onto the tangent spaces are defined by 
\[\P_i(x) := \P(\n_i(x))=\Id - \n_i(x) \otimes \n_i(x).\] 
Note that $\mathcal{S}_i(x):=\D^2 \dist_i(x)=\D \n_i(x) \P_i(x)$ is the shape operator of the immersed \added{hyper}surface $\M^{\dist_i(x)}_i$ at a point $x$.
In fact, from $|\n_i(x)|^2=1$ we deduce by differentiation that $\n_i^T(x)\mathcal{S}_i(x)=0$. This, together with 
the fact that $\n_i \otimes \n_i$ is the projection onto the normal of the \added{hyper}surface $\mathcal{S}_i$ shows that 
\begin{equation*}
\P_i(x) \D \n_i (x) = \D \n_i(x).
\end{equation*}
With our choice of signs for $\dist_i$, the symmetric matrices $\mathcal{S}_i$ are positive semidefinite for convex \added{hyper}surfaces $\M_i$. 
Further information on tangential calculus for level set functions may be found in Chapter 9 of \cite{DelZol11}.

\subsection{Tangential derivative and area and length distortion} \label{sec:tangential}
First, let us assume that $\phi$ exactly maps $\M_1^c$ onto $\M_2^c$, for all $c>0$.
Then, 
$T_x \M_1^{\dist_1(x)}=\im\,\P_1(x)$ and $T_{\phi(x)}\phi(\M_1^{\dist_1(x)})=T_{\phi(x)}\M_2^{\dist_2(\phi(x))}=\im \P_2 ( \phi(x) )$
and we define the tangential derivative induced by the deformation $\phi$ as 
\begin{equation}\label{def:Dtt}
\Di{tg}\phi(x):=\P_2( \phi(x) ) \, \D\phi(x) \, \P_1 (x)\,,
\end{equation}
capturing the tangential variation of $\phi(x)$ on $\M_2$ along tangential directions on $\M_1$. 
In the variational model we consider below an energy term depending on $\Di{tg}\phi(x)$ will reflect the tangential 
distortion of the deformation in the context of a matching of the two \added{hyper}surfaces 
$\M_1$ and $\M_2$ even though  $\phi(\M_1)$ does not necessarily equal $\M_2$.
Indeed, in the case  $\M_2 \neq \phi(\M_1)$  the variation along a tangent direction on 
$\M_1$ is still projected via $\Di{tg}\phi(x)$ onto the tangent space 
$T_{\phi(x)}\M_2^{\dist_2(\phi(x))}$ and not onto the tangent space of the deformed \added{hyper}surface 
$\phi(\M_1)$ (cf. Fig. \ref{fig:Dtt}). Therefore there may exist tangential directions
$v \in T_x \M_1^{\dist_1(x)}$, such that  $\Di{tg}\phi(x) v =0$ even though $\D\phi v \neq 0$.
Thus $\Di{tg}\phi(x)$ can only be considered a measure of tangential distortion if $\phi(\M_1)$ is 
sufficiently close to $\M_2$ in the sense of closeness of tangent bundles.

%ffffffffffffffffffffffffffffffffffffffffffffffffffffffff
%%$$\mathbb{R}^3\xrightarrow{\mathbf{P}_1(x)}T_x\mathcal{M}_1^{\mathbf{d}_1(x)}\xrightarrow{\mathcal{D}\phi(x)}\mathbb{R}^3\xrightarrow{\mathbf{P}_2(\phi(x))}T_{\phi(x)}\mathcal{M}_2^{\mathbf{d}_2(\phi(x))}$$
\begin{figure}[ht]
\begin{center}
 \includegraphics[width=.7\textwidth]{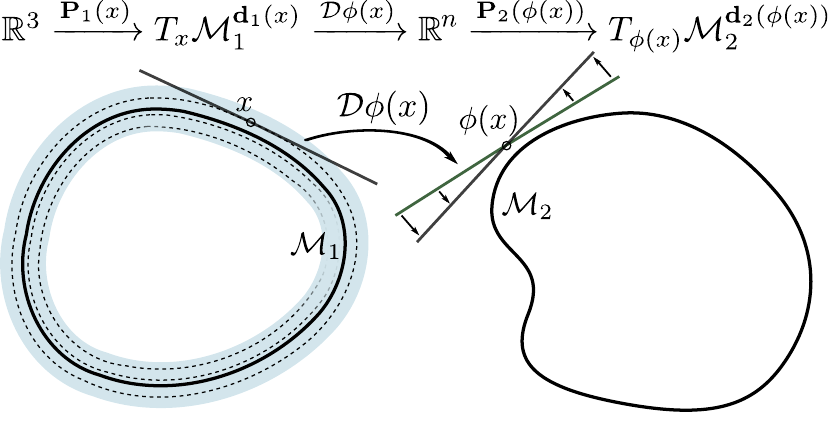}
    \caption{A sketch of the tangential derivative $\Di{tg}\phi$ in the non-exact matching case with $\phi(\M_1) \neq \M_2$.}
    \label{fig:Dtt}
    \end{center}
\end{figure}
%ffffffffffffffffffffffffffffffffffffffffffffffffffffffff

For a general deformation $\psi:\R^n\to \R^n$ the Cauchy-Green strain tensor $\D \psi^T \D \psi$ describes (up to first order) 
the deformation in a frame invariant (with respect to rigid body motions) way. 
Since we are interested in the effect of such a deformation between two \added{hyper}surfaces, for a suitably extended tangential gradient 
$\Di{tg}\phi + \n_2\circ \phi \otimes \n_1$ we define the {\it extended tangential part of the Cauchy-Green strain tensor}, measuring only tangential distortion: 
\begin{equation}\label{eq:Ctt}
\left(\Di{tg}\phi + (\n_2\circ \phi) \otimes \n_1\right)^T \left(\Di{tg}\phi + (\n_2 \circ \phi) \otimes \n_1\right) = 
%=\P\big(\n_1 \big) \D \phi \P\big(\n_2 (\phi )\big) \D \phi \P\big(\n_1 \big) + \n_1 \otimes \n_1 = \\
\Di{tg}\phi^T \Di{tg}\phi + \n_1  \otimes \n_1\,.
\end{equation}
The term $\n_2(\phi(x)) \otimes \n_1(x)$ is used to complement directions that are removed by the projections in the 
definition of the tangential distortion $\Di{tg}\phi$ and can be seen to realize a nonlinear Kirchhoff-Love assumption 
\cite[Page 336]{Cia00}, which postulates that lines normal to the middle surface of a shell remain normal after the deformation, without stretching.

Next, we investigate the area and length distortion due to the tangential derivative $\Di{tg}\phi$.
For a given vector $e \in \R^n$ we denote by $\Q(e)$ any proper rotation such that $\Q(e) e_n = e$, 
where $e_n$ denotes the $n$-th element of the canonical basis of $\R^n$. 
Note that this condition does not specify a unique $\Q(e)$.
Then, for every $B \in \R^{n \times n}$ satisfying $w \in \ker B$ and $\im B \subseteq v^\perp$ for some unit 
vectors $v,w \in \S^{n-1}$, we have
\begin{equation}\label{eq:complement}
\begin{gathered}
\Q(v)^T ( B + v \otimes w ) \Q(w) = \Q(v)^T B \Q(w) + e_n \otimes e_n=\left(
\begin{array}{c|c}
\tilde{B} &  0 \\ \hline
0  &  1
\end{array}
\right),
\end{gathered}
\end{equation}
where $\tilde{B}$ is the upper left $(n-1)\times(n-1)$ submatrix of $\Q(\added{v})^T B \Q(\added{w})$. 
Obviously  \eqref{eq:complement} implies
\begin{eqnarray*}
\det(B + v \otimes w)&=&\det(\tilde{B})\,,\\
|B+ v\otimes w|^2&=&\tr\big( (B + v\otimes w)^T(B+ v\otimes w) \big)= 1+ | \tilde{B} |^2\,.
\end{eqnarray*}
Hence,  for  $\phi(\M_1)=\M_2$ and  $v=n_2(\phi(x))$, $w=n_1(x)$ the area distortion under the \added{hyper}surface matching deformation $\phi$ at some position $x$ 
is described by $\det(\Di{tg}\phi(x) + \n_2(\phi(x)) \otimes \n_1(x))$, which equals the positive square root of the determinant of the above Cauchy-Green strain tensor $\Di{tg}\phi^T \Di{tg}\phi + \n_1  \otimes \n_1$.
The squared tangential length distortion (in the sense of summing all squared distortions with respect to an orthogonal basis)
is described by $|\Di{tg}\phi(x) + \n_2(\phi(x)) \otimes \n_1(x))|^2$ and equals the trace of the Cauchy-Green strain tensor.

%%%%%%%%%%%%%%%%%%%%%%%%%%%%%%%%%%%%%%%%%%%%%
\subsection{\added{Bending} and curvature mismatch}
Now, we quantify the change of curvature directions and magnitudes under the deformation $\phi$.
% Thereby we take into account not only scalar quantity like the mean or the Gaussian curvature
% but we aim at measuring the mismatch of curvature directions.  
Our approach is motivated by models describing bending of elastic shells, because in our application the 
\added{hyper}surfaces are considered as thin shells.

In order to quantify the changes of curvature we first assume that $\phi(\M_1)=\M_2$,
and compute the difference of the pull back of the shape operator $\mathcal{S}_2$ on $\M_2$ onto $\M_1$ 
under the deformation $\phi$ and the shape operator $\mathcal{S}_1$ on $\M_1$, which, for two arbitrary 
directions $v,\,w\in \R^n$, is given by 
$$
\mathcal{S}_2(\phi(x)) \D \phi(x) v \cdot \D \phi(x) w -  \mathcal{S}_1(x)  v \cdot  w = 
\left(\D \phi(x)^T \mathcal{S}_2(\phi(x)) \D \phi(x) - \mathcal{S}_1(x)\right) v\cdot w\;.
$$
If $v,\,w$ are tangent vectors in $T_x \M_1$, this difference describes the {\bf relative shape operator}.

We define the {\bf extended relative shape operator}
%(as a tensor in $\R^{n,n}$)
\begin{equation}\label{eq:extshaperel}
\mathcal{S}_{rel}(x) := \D \phi(x)^T \mathcal{S}_2(\phi(x)) \D \phi(x) - \mathcal{S}_1(x)\,.
\end{equation}

For $n=3$ and when $\phi$ is an isometric deformation between $\M_1$ and $\M_2$ (that is, $\D \phi(x)$ is an 
orthogonal mapping on $T_x \M_1$ for all $x\in \M_1$), $\mathcal{S}_{rel}$ appears in physical models 
for thin elastic shells in the context of the $\Gamma-$limit of $\rm{3D}$ hyperelasticity \cite{FriJamMorMul03}.
Even though we do not necessarily expect our deformations to be tangentially isometric, we use 
this ansatz to compare curvatures of level sets in deformed and undeformed configuration, respectively. 
The following calculations shed some light on the properties of $\mathcal{S}_{rel}$:
\begin{equation}\label{eq:deformedShapeOp}
\begin{aligned}
\D^2(\dist_2 \circ \phi)(x)&=\added{\D\big((\D \phi)^T(\n_2 \circ \phi)\big)(x)}\\
&=\D\phi(x)^T \mathcal{S}_2(\phi(x)) \D\phi(x) + \sum_{k=1}^n \big(\n_2\added{(\phi(x))}\big)_k \D^2\phi^k(x)\,.
\end{aligned}
\end{equation}
The assumption that $\phi(\M_1)=\M_2$ can be rewritten as $\dist_2 \circ \phi(x) = 0$ for $x \in \M_1$.
Let us assume that in addition $\dist_2 \circ \phi$ is a distance function (that is $|\nabla ( \dist_2 \circ \phi )|=1$),
then $\dist_2 \circ \phi$ is again a distance function, and since $\dist_2 \circ \phi = 0$ it follows that 
the left hand side of \eqref{eq:deformedShapeOp} is the shape operator of the \added{hyper}surface $\M_1$.
%$(\dist_2 \circ \phi)^{-1}(0)=\phi^{-1}(\M_2)$ at $x$. 
The first term in the right hand side is the pullback of $\mathcal{S}_2$. 

%The second term 
%%%%resembles the definition of the second fundamental form of an immersion 
%%%% $\psi:U \to \R^3$ with domain $U \subset \R^2$ and image $\M:=\psi(U)$. Hence, it 
%measures the additional curvature induced by $\phi$ that is observed on $\M_2$, 
%but was not present in the preimage $\phi^{-1}(\M_2) = \M_1$. 

Let us remark that the appearance of a second fundamental form is consistent with Koiter's nonlinear thin shell theory 
\cite{Koi66}, \cite[Section 11.1]{Cia00}.
Regardless of whether $\dist_2 \circ \phi$ is a distance function or not, \eqref{eq:deformedShapeOp} implies that 
\begin{equation}\label{eq:bendingExplanation}
\begin{aligned}
 \D \phi(x)^T \mathcal{S}_2(\phi(x)) \D \phi(x) - \mathcal{S}_1(x) &= 
- \sum_{k=1}^n (\n_2\added{(\phi(x))})_k \D^2\phi^k(x)+\D^2(\dist_2\circ \phi - \dist_1)(x) \\
&= 
- \sum_{k=1}^n (\n_2\added{(\phi(x))})_k \D^2\phi^k(x),
\end{aligned}
\end{equation}
in case $\dist_2 \circ \phi = \dist_1$. In the next section, we use the extended relative shape operator to derive a variational model for the mismatch of curvatures.

%%%%%%%%%%%%%%%%%%%%%%%%%%%%%%%%%%%%%%%%%%%%%
%%%%%%%%%%%%%%%%%%%%%%%%%%%%%%%%%%%%%%%%%%%%%
%%%%%%%%%%%%%%%%%%%%%%%%%%%%%%%%%%%%%%%%%%%%%
\section{Energy functional}\label{sec:levelsets}
Given two \added{hyper}surfaces $\M_1$ and $\M_2$ our ultimate goal is to describe best matching deformations $\phi$, which map $\M_1$ onto $\M_2$ as the minimizer of a 
suitable energy. Thereby, different energy terms will reflect a set of matching conditions for a volumetric deformation $\phi:\Omega \to \R^{n}$ and without a hard constraint $\phi(\M_1) = \M_2$:
\begin{itemize}
\item A membrane deformation energy $E_{\text{mem}}$ penalizes the tangential distortion measured through $\Di{tg}\phi$.
\item A bending energy $E_{\text{bend}}$ penalizes bending as reflected by the relative shape operator.
\item A matching penalty $E_{\text{match}}$ ensures a proper matching of the two \added{hyper}surfaces $\M_1$ onto $\M_2$ via a narrow band approach.
\item A volume energy $E_{\text{vol}}$ enforces a regular deformation on the whole computational domain $\Omega$.  
\end{itemize}
Our approach is based on level sets. Hence, we replace the integration over a single \added{hyper}surface, \ie $\M_1$, for the first three energies by a weighted integration over a narrow band of width $\sigma$ with 
$0 < \sigma < \Dist(\M_1, \Sing \dist_1)$. To this end we will make use of a cutoff function $\eta_\sigma \in C^\infty_0(\R)$ with $\int_\R \eta_\sigma(t) \dd t =1$ and 
$\supp\, \eta_\sigma = [-\sigma, \sigma]$. Additionally, $\eta_\sigma$ is assumed to be even and strictly decreasing in $[0,+\infty)$.

In what follows we introduce the four energy contributions separately. 

%%%%%%%%%%%%%%%%%%%%%%%%%%%%%%%%%%%%%%%%%%%%%
%%%%%%%%%%%%%%%%%%%%%%%%%%%%%%%%%%%%%%%%%%%%%
\subsection{Tangential distortion energy}

Picking up the insight gained in Section \ref{sec:tangential} we formulate the membrane energy in terms of the length and area change associated with the tangential distortion $\Di{tg}\phi$: 
\begin{equation}\label{def:Emem}
E_{\text{mem}}[\phi]= \delta \int_\Omega \eta_\sigma (\dist_1(x)) \W\big( \Di{tg}\phi(x) + \n_2(\phi(x)) \otimes \n_1(x) \big) \,\dd x,
\end{equation}
where $\W$ is a nonnegative polyconvex energy density vanishing at $\SO(n)$.
The weight $\delta$ reflects the proper scaling of the tangential distortion energy in case of a thin shell model with shell thickness $\delta$.  

The energy \eqref{def:Emem} vanishes only on deformations $\phi$ whose Jacobian matrix $\D \phi(x)$ maps $T\M_1^{\dist_1(x)}$ isometrically onto $T\M_2^{\dist_2(\phi(x))}$ for every point $x \in \supp{\eta_\sigma \circ \dist_1}$. In consequence, both tangential expansion and compression are penalized. 

Let us remark, that the extension $\Di{tg}\phi + \n_2(\phi(x)) \otimes \n_1(x)$ 
of the tangential derivative $\Di{tg}\phi$ defined in \eqref{def:Dtt} with rank $n-1$ can degenerate or be orientation-reversing depending on the 
local configuration of $\M_1$ and $\M_2$ at $x$ (cf. Figure \ref{fig:detDtt} for examples). 
%\JI{Therefore, under the assumption $\det \D \phi(x) >0$ (which is in fact enforced almost everywhere by our volume regularization term \eqref{def:Evol} introduced below), the fact that 
% $W$ has a minimum only on $\SO(3)$ means that deformations which may be locally isometric but for which $\n_1 (x) \cdot \n_2 (\phi(x)) \leq 0$ are also penalized.}
%ffffffffffffffffffffffffffffffffffffffffffffffff
\begin{figure}[ht]
    \begin{center}
        \includegraphics[width=.9\textwidth]{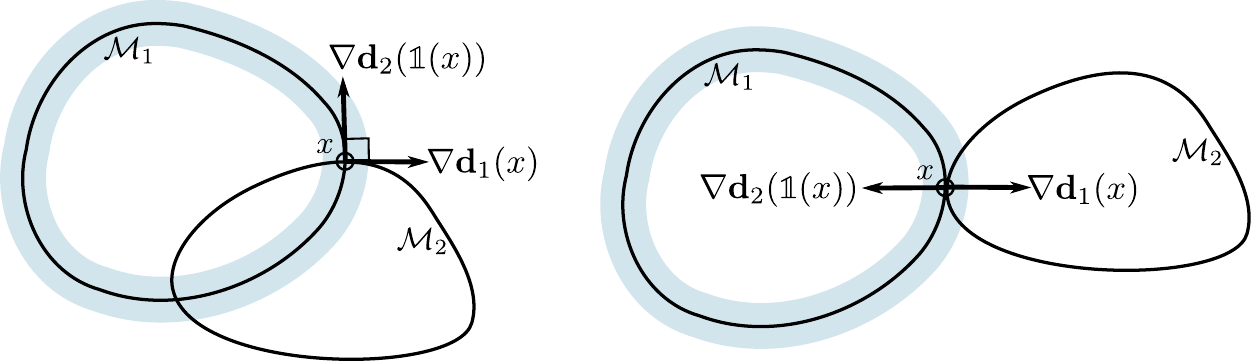}
    \caption{Configurations in which for the (obviously isometric) identity we have
    $\det \left( \Di{tg}\Id(x) + \n_2(\Id x) \otimes \n_1( x ) \right) = 0$ (left) and $\det \left( \Di{tg}\Id(x) + \n_2(\Id x) \otimes \n_1( x ) \right) < 0$ (right) and thus the extended tangential derivative degenerates or reverses orientation.}
    \label{fig:detDtt}
    \end{center}
\end{figure}
%ffffffffffffffffffffffffffffffffffffffffffffffff

Furthermore, the energy density $W$ should not satisfy $\W(B) \to \infty$ for $\det B \to 0$. A straightforward modification of the arguments of Ciarlet and Geymonat (\cite{CiGe82}, \cite{Cia88} Theorem 4.10-2) leads to \added{a smooth} integrand $\W$ \added{which has isometries as local minimizers, } with the correct invariance properties, and with a Hessian for $B=\Id$ which matches the quadratic energy integrand of the Lam\'{e}-Navier model of linearized elasticity.
With given Lam\'{e} coefficients $\lambda, \mu > 0$, we select the energy 
\[\W(A)=\frac{\mu}{2}| A |^2 + \frac{\lambda}{4} (\det A)^2 + 
\left(\mu + \frac{\lambda}{2}\right)e^{- ( \det A - 1 ) } - \frac{(n+2) \mu}{2} - \frac{3\lambda}{4}\,.\]
This density fits into the notation of \eqref{eq:genericDensity}, if we choose $p=q=2$ and $\Gamma(t)=c t^2 + d e^{- (t-1) }\;.$
%%%%%%%%%%%%%%%%%%%%%%%%%%%%%%%%%%%%%%%%%%%%%
%%%%%%%%%%%%%%%%%%%%%%%%%%%%%%%%%%%%%%%%%%%%%
\subsection{Bending energy}

%%%%%%%%%%%%%%%%%%
% curvature matching condition 
Now, we discuss a variational formulation of the curvature matching condition $\D \phi(x)^T \mathcal{S}_2(\phi(x)) \D \phi(x) = \mathcal{S}_1(x)$, which is equivalent to a vanishing relative shape operator  (cf. \eqref{eq:extshaperel}), 
where  $\mathcal{S}_i = \D \n_i \P_i = \P_i \D^2 \dist_i \P_i$ for $i=1,2$ are the shape operators on the \added{hyper}surfaces $\M_1$ and $\M_2$, respectively.
At first sight, it appears natural to formulate a quadratic penalization  and to define a bending energy 
$$
\tilde E_{\text{bend}}[\phi]= \delta^3  \int_\Omega \eta_\sigma (\dist_1(x)) |\D\phi(x)^T \mathcal{S}_2(\phi(x)) \D\phi(x) - \mathcal{S}_1(x)|^2 \dd x\,.
$$
The weight $\delta^3$ reflects the scaling of the bending energy for thin shells of 
thickness $\delta$.
However, this energy is in general not weakly lower semicontinuous. \added{Indeed, consider a situation in which $\mathcal{S}_1(x) = \mathcal{S}_2(\phi(x)) = P(e_n)$. A short computation shows that the corresponding density is not convex in its matrix variable along the rank-one segment joining $\Id-\frac{3}{4}e_1 \otimes e_1$ and $\Id-\frac{1}{2}e_1 \otimes e_1$, which precludes lower semicontinuity (cf. also 
Example \ref{ex:norank1conv} below on the lack of rank-one convexity). In fact, this kind of density is closely related to the Saint Venant-Kirchhoff energy, whose quasiconvex envelope is computed in \cite{LedRao95a}.} 

Thus, we are asking for an alternative lower semicontinuous energy functional which gives preference to 
deformations $\phi$ for which $\D\phi^T ( \mathcal{S}_2\circ \phi ) \D\phi$ is close to $\mathcal{S}_1$. 
We show that this can be achieved with the extended shape operators 
$\mathcal{S}^{ext}_i = \P_i \D^2 \dist_i \P_i + \n_i \otimes \n_i$ for $i=1,2$ and factorization. 
For proving this we make use of the following lemma.
%%%%%%%%%%%%%%%%%%
% Lemma
\begin{lemma}[modified curvature matching condition]\label{lemma:MN}
Assume that $M$, $N$ are two symmetric, positive definite matrices satisfying 
\begin{equation*}
M = \P_1 M \P_1 + \n_1 \otimes \n_1 \text{ and } N = \P_2 N \P_2 + \n_2\otimes \n_2\;.
\end{equation*}
Moreover, assume that $A\in \R^{n \times n}$ satisfies 
\begin{equation*}
 A \P_1 = \P_2 A\,,
\end{equation*}
then the following statements are equivalent:
\begin{equation}\label{eq:a}
A^T \P_2 N \P_2 A =  \P_1 M \P_1
\end{equation}
and 
\begin{equation}\label{eq:b}
\Lambda[M,N,A] \added{:}= \P_2 N^{\frac12} \P_2 A \P_1 M^{-\frac12} \P_1 + \n_2 \otimes \n_1 \in \O(n)\,.
\end{equation}
\end{lemma}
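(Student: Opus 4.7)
The strategy is to compute $\Lambda^T\Lambda$ directly and show it equals $\Id$ exactly when \eqref{eq:a} holds. First I would unpack the block structure: the hypothesis $M = \P_1 M \P_1 + \n_1 \otimes \n_1$ says that in any orthonormal basis whose last vector is $\n_1$, $M$ is block-diagonal with a $1$ in the normal slot. This carries over to the positive definite square root, so
\[M^{\pm 1/2} = \P_1 M^{\pm 1/2} \P_1 + \n_1 \otimes \n_1,\]
and in particular $\P_1$ commutes with $M^{\pm 1/2}$, while $\P_1 M^{1/2}\P_1$ and $\P_1 M^{-1/2}\P_1$ are mutual inverses on $\im \P_1$ and square to $\P_1 M^{\pm 1} \P_1$. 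The analogous identities hold for $N$, yielding in particular $(\P_2 N^{1/2} \P_2)^2 = \P_2 N \P_2$. The hypothesis $A\P_1 = \P_2 A$ also transposes to $\P_1 A^T = A^T \P_2$.

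Next, setting $B := \P_2 N^{1/2} \P_2 A \P_1 M^{-1/2} \P_1$, I observe from the projections that $B\n_1 = 0$ and $B^T \n_2 = 0$, while $(\n_1 \otimes \n_2)(\n_2 \otimes \n_1) = \n_1 \otimes \n_1$. Writing $\Lambda = B + \n_2 \otimes \n_1$, all cross terms in $\Lambda^T\Lambda$ vanish and one obtains
\[\Lambda^T \Lambda = B^T B + \n_1 \otimes \n_1.\]
Since $\O(n) = \{Q : Q^TQ = \Id\}$ and $\Id = \P_1 + \n_1 \otimes \n_1$, the condition \eqref{eq:b} is equivalent to $B^TB = \P_1$.

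The final step is to identify this with \eqref{eq:a}. Using $(\P_2 N^{1/2} \P_2)^2 = \P_2 N \P_2$, expansion yields
\[B^T B = \P_1 M^{-1/2}\P_1 \,\bigl(A^T \P_2 N \P_2 A\bigr)\, \P_1 M^{-1/2} \P_1.\]
Assuming \eqref{eq:a} and substituting $\P_1 M \P_1$ for the middle factor, the commutation of $\P_1$ with $M^{\pm 1/2}$ collapses this to $\P_1$. Conversely, multiplying $B^TB = \P_1$ on both sides by $\P_1 M^{1/2}\P_1$ and using $\P_1 A^T \P_2 = A^T \P_2$ together with its transpose recovers $A^T \P_2 N \P_2 A = \P_1 M \P_1$.

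The main obstacle is purely bookkeeping: one must verify carefully that the special block form of $M$ and $N$ is inherited by their square roots, so that all the projection-commutation identities hold as stated. Once these are in place, the equivalence reduces to direct algebraic manipulation.
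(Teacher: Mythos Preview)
Your proposal is correct and follows essentially the same route as the paper: both compute $\Lambda^T\Lambda$, observe the cross terms vanish so that $\Lambda^T\Lambda = B^TB + \n_1\otimes\n_1$, use $(\P_2 N^{1/2}\P_2)^2=\P_2 N\P_2$ to simplify $B^TB$, and then pass between $B^TB=\P_1$ and \eqref{eq:a} by multiplying on both sides with $\P_1 M^{1/2}\P_1$ together with the intertwining relation $A\P_1=\P_2 A$. Your write-up is slightly more explicit than the paper in justifying that $M^{\pm 1/2}$ inherits the block structure of $M$, which the paper uses without comment.
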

\begin{proof}
By definition, the matrix $\Lambda[M,N,A]$ is orthogonal if $\Lambda[M,N,A]^T \Lambda[M,N,A] = \Id$. Therefore, if \eqref{eq:b} holds, then 
\begin{eqnarray*}
\Id &=& (\P_2 N^{\frac12} \P_2 A \P_1 M^{-\frac12} \P_1 + \n_2 \n_1^T)^T (\P_2 N^{\frac12} \P_2 A \P_1 M^{-\frac12} \P_1 + \n_2  \n_1^T) \\
&=& \P_1 M^{-\frac12} \P_1 A^T \P_2 N^{\frac12} \P_2 \P_2 N^{\frac12} \P_2 A \P_1 M^{-\frac12} \P_1 + \n_1  \n_2^T \n_2  \n_1^T \\
&=& \P_1 M^{-\frac12}  A^T \P_2 (\P_2 N^{\frac12} \P_2)^2 \P_2 A  M^{-\frac12} \P_1 + \n_1  \n_1^T \,.
\end{eqnarray*}
If we multiply this equation from left and right by $\P_1 M^{\frac12} \P_1$ and 
take into account that $(\P_2 N^{\frac12} \P_2)^2 = \P_2 N \P_2$ and $(\P_1 M^{\frac12} \P_1)^2 = \P_1 M \P_1$ we see that this is equivalent to 
$$
\P_1 A^T \P_2 N \P_2  A \P_1 = \P_1 M \P_1\,.
$$ 
Applying that $A \P_1 = \P_2 A$ we finally achieve at the equivalent condition 
$$
A^T \P_2 N \P_2  A = \P_1 M \P_1\,.
$$
The proof of the converse follows the same steps in opposite direction.
\end{proof}
%%%%%%%%%%%%%%%%%%
% choices for M and N
If the assumptions of this lemma apply to $M=\mathcal{S}^{ext}_1(x)$, $N=\mathcal{S}^{ext}_2(y)$, and 
$A= \D \phi(x)$ with $y=\phi(x)$, then the curvature matching condition 
\begin{equation}\label{eq:matchingObjective}(\D \phi(x))^T \P_2(y) \mathcal{S}^{ext}_2(y) \P_2(y) \D \phi(x) = \P_1(x) \mathcal{S}^{ext}_1(x) \P_1(x)\end{equation}
is equivalent to  $\Lambda(\mathcal{S}^{ext}_1(x),\mathcal{S}^{ext}_2(\phi(x)) ,\D \phi(x)) \in \O(n)$
and a lower semicontinuous energy functional penalizing deviations of 
$\Lambda(\mathcal{S}^{ext}_1(x),\mathcal{S}^{ext}_2(\phi(x)) ,\D \phi(x))$ from $\O(n)$ would be a proper choice 
for realizing curvature matching. 
Unfortunately, the positive definiteness assumption of Lemma \ref{lemma:MN} is not fulfilled if principal curvatures 
of $\M_1$ or $\M_2$ are negative. 
Hence, we are replacing the extended shape operator matrices $\mathcal{S}_i^{ext}$ by symmetric and positive definite curvature classification matrices $C_i = \mathcal{C}(\mathcal{S}_i^{ext})$, $i=1,2$, respectively.

%%%%%%%%%%%%%%%%%%
% classification operator
We have experimented with two different choices for $\mathcal{C}$:
\begin{itemize}
\item A simple choice is $\mathcal{C}(\mathcal{S}_i^{ext})= \mathcal{S}_i^{ext} + \mu \Id$, where $-\mu$ is a strict lower bound of the principal curvatures.
But in applications surfaces are frequently characterized by strong creases or rather sharp edges, leading to very large $\mu$. As a consequence the relative difference of the eigenvalues 
is significantly reduced when dealing with the resulting curvature classification matrices. Thus, the variational approach is less sensitive to different principal curvatures of the input \added{hyper}surfaces.
\item Another option is to use a truncation of the absolute value function for the eigenvalues of symmetric matrices. For a symmetric matrix $B\in \R^{n,n}$ with eigenvalues 
$\lambda_1,\ldots, \lambda_n$ and a diagonalization $B=Q^T \mathrm{diag}(\lambda_1,\ldots, \lambda_n) Q$ we use the classification operator 
$$\mathcal{C}(B) = Q^T \mathrm{diag}(|\lambda_1|_\tau,\ldots, |\lambda_n|_\tau) Q\,,$$
where $|\lambda|_\tau = \max\{|\lambda|, \tau\}$ for some $\tau >0$.  This approach properly represents the exact shape operator matching objective in case of principal curvatures of equal sign and absolute value larger than $\tau$. A disadvantage of this construction is that it is not able to force the deformation to correctly match curvature directions on the \added{hyper}surface with the same absolute value of the principal curvatures but with different signs. That is, locally a saddle point of the \added{hyper}surface may be mistaken for an elliptical point. However, this effect is usually compensated globally, and in applications the ansatz performs well, in particular in matching regions of edges and creases (see Section \ref{sec:appl}).
\end{itemize}
Like for the membrane energy \eqref{def:Emem}, if $\D \phi(x)$ is ensured to be orientation-preserving ($\det \D \phi >0$) and $\n_1 \cdot ( \n_2 \circ \phi ) > 0$ (cf. Figure \ref{fig:detDtt}), the curvature matching condition is equivalent to $$\Lambda(\mathcal{C}(\mathcal{S}^{ext}_1(x)),\mathcal{C}(\mathcal{S}^{ext}_2(\phi(x))) ,\D \phi(x)) \in \SO(n).$$

%%%%%%%%%%%%%%%%%%
% E_{bend}
Based on these considerations, a suitable choice for the bending energy is
\begin{equation}\label{def:Ebend}
E_{\text{bend}}[\phi]=
\delta^3 \int_\Omega \eta_\sigma (\dist_1(x)) \W \big( \Lambda\left(\mathcal{C}(\mathcal{S}^{ext}_1(x)),\mathcal{C}(\mathcal{S}^{ext}_2(\phi(x))) ,\D \phi(x)\right) \big) \,\dd x,
\end{equation}
where $\W$ can be chosen as the same polyconvex density already used for $E_{\text{mem}}$. 
%%%%%%%%%%%%%%%%%%%%%%%%%%%%%%%%%%%%%%%%%%%%%
\subsection{Mismatch penalty and volumetric regularization energies}
So far, we have defined tangential membrane and bending energies which quantify the appropriateness of 
deformations $\phi:\Omega \to \R^n$ in a narrow band around the \added{hyper}surface $\M_1$.
In the derivation of these energies we assumed the constraint $\phi(\M_1)=\M_2$. 
However, such a constraint would be very hard to enforce numerically. Thus we use a weaker mismatch penalty instead:
\begin{equation}\label{def:Ematch}
E_{\text{match}}[\phi]=\frac{1}{\nu}\int_\Omega ( \eta_\sigma \circ \dist_1 ) \big| \dist_2 \circ \phi - \dist_1 \big|^2\, \dd x\,,
\end{equation}
where $1/\nu$ is a penalization parameter.

Moreover, we aim for a regular deformation on the whole computational domain $\Omega$ which is globally injective.
This, in particular, prevents from self-intersections of the deformed \added{hyper}surface $\phi(\M_1)$. 
To achieve this we introduce the following volume regularization term based on a polyconvex density $\hat{W}$ 
that enforces orientation preservation
\begin{equation}\label{def:Evol}
E_{\text{vol}}[\phi] =\begin{cases}\int_\Omega \hat{W}(\D\phi,\cof \D\phi,\det \D\phi)\,\dd x &\text{if } \det \D \phi(x) > 0 \text{ for a.e. }x, \\
+\infty &\text{otherwise.}
\end{cases}\; \text{, where }
\end{equation}
\begin{equation}\label{def:What}
{\hat  W}(\D\phi,\cof \D\phi,\det \D\phi) = \alpha_p |\D\phi|^{p}+ \beta_q |\cof \D\phi|^{q} + \gamma_s (\det \D\phi)^{-s},
\end{equation}
with $p>n$, $q>n$, $s>(n-1)q/(q-n)$, and with $\alpha_p, \beta_q, \gamma_s >0$ ensuring that the density $\hat{W}$ 
attains \added{a local minimum when } $\D\phi^T \D\phi = \Id$. As mentioned in the introduction, such an energy is weakly lower semicontinuous in $W^{1,p}(\Omega; \R^n)$ when restricted to deformations whose Jacobian determinant is positive almost everywhere, and this condition is closed under weak convergence. 

\added{Choosing $p=q=n+1$, $s=n^2$ and using that because of its symmetries $\hat{W}$ can be expressed in terms of singular values \cite[Proposition 5.31]{Dac08}, elementary but lengthy computations yield the stationarity condition at $\Id$
$$(n+1)\, n^{\frac{n-1}{2}}\big(\alpha_p + (n-1)\beta_q\big)=n^2 \gamma_s,$$
and that the corresponding Hessian is positive definite. For $n=3$ an adequate example is then } $p=q=4$, $s=9$, $\alpha_p=1, \beta_q=1$, and $\gamma_s=4$. For $n=2$, \added{one can use $p=q=3$, $s=4$, $\alpha_p=\beta_q=2$} and $\gamma_s=3 \sqrt{2}$. \added{Notice that in this case, $|\D\phi|=|\cof \D\phi|$.}

%%%%%%%%%%%%%%%%%%%%%%%%%%%%%%%%%%%%%%%%%%%%%
\subsection{Total energy}
Summing the above terms, our energy for shape-aware level set matching reads
\begin{equation}\label{def:energy}
E_\nu[\phi]=E_{\text{match}}[\phi]+E_{\text{mem}}[\phi]+E_{\text{bend}}[\phi]+E_{\text{vol}}[\phi],
\end{equation}
where the different terms depend on the fixed input geometries $\M_1$ and $\M_2$ through $\dist_1$ and $\dist_2$. 
%We keep the dependence on $\nu$ explicit in our notation $E$, since it plays an important role in our analysis.

%%%%%%%%%%%%%%%%%%%%%%%%%%%%%%%%%%%%%%%%%%%%%
%%%%%%%%%%%%%%%%%%%%%%%%%%%%%%%%%%%%%%%%%%%%%
%%%%%%%%%%%%%%%%%%%%%%%%%%%%%%%%%%%%%%%%%%%%%
\section{Existence of optimal matching deformations}
First we prove the following weak continuity lemma, which is a generalization of the classical result 
given in \cite[Theorem 4.1]{Mur87}. Here the coefficients may depend on the deformed configuration.
\begin{lemma}
\label{lem:cont} Let $\phi^k \rightharpoonup \phi \in W^{1,p}(\Omega; \R^n)$ and $p>n$. 
Moreover, let $V_i \in C^0(\overline{\Omega} \times \R^{n}; \S^{n-1})$, $i=1,2$ and we denote
\begin{equation*}
 \added{\v}_i^k(\cdot):=V_i\big(\cdot, \phi^k(\cdot)\big) \text{ and } \added{\v}_i:=V_i\big(\cdot, \phi(\cdot)\big)\,,\quad i=1,2\;.
\end{equation*}
Then 
\begin{equation}\label{eq:first}
 \det\big( \P(\added{\v}_2^k) \D \phi^k \P(\added{\v}_1^k) + \added{\v}_2^k \otimes \added{\v}_1^k \big ) \xrightharpoonup{L^{\frac{p}{n}}} \det\big( \P(\added{\v}_2) \D\phi\, \P(\added{\v}_1) + \added{\v}_2 \otimes \added{\v}_1 \big )\,.
\end{equation}
%W\to V_1 and $V \to V_2$ $M \to M_1$ und $N \to M_2$
Moreover, for every \added{symmetric positive definite} $M_i$, $i=1,2$ with $M_1^{-\frac{1}{2}} \in C^0(\overline{\Omega} \times \R^{n}; \R^{n \times n})$ and $M_2^{\frac{1}{2}} \in C^0(\overline{\Omega} \times \R^{n}; \R^{n \times n})$ and the corresponding compositions 
\begin{equation*}
 M_i^k(\cdot):=M_i\big(\cdot, \phi^k(\cdot)\big) \text{ and } \overline{M}_i:=M_i\big(\cdot, \phi(\cdot)\big)
\end{equation*}
we have
\begin{equation}\label{eq:second}
\begin{aligned}
~ & \det \Lambda(M_1^k,M_2^k,\D\phi_k) = 
 \det\big( \P(\added{\v}_2^k) (M_2^k)^{\frac{1}{2}} \P(\added{\v}_2^k) \D \phi^k \P(\added{\v}_1^k) (M_1^k)^{-\frac{1}{2}} \P(\added{\v}_1^k) + \added{\v}_2^k \otimes \added{\v}_1^k \big )\\
\xrightharpoonup{L^{\frac{p}{n}}} & 
\det\big( \P(\added{\v}_2) (\overline{M}_2)^{\frac{1}{2}} \P(\added{\v}_2) \D\phi\, \P(\added{\v}_1) (\overline{M}_1)^{-\frac{1}{2}} \P(\added{\v}_1) + \added{\v}_2 \otimes \added{\v}_1 \big ) = 
\det \Lambda(\overline{M}_1,\overline{M}_2,\D\phi)\;.
\end{aligned}
\end{equation}
\end{lemma}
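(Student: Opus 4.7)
The classical result \cite[Theorem 4.1]{Mur87} quoted in the statement, together with its standard extension to minors of arbitrary order, tells us that when $\phi^k\rightharpoonup\phi$ in $W^{1,p}(\Omega;\R^n)$ with $p>n-1$, every $(n-1)$-minor of $\D\phi^k$ converges weakly in $L^{p/(n-1)}(\Omega)$ to the corresponding minor of $\D\phi$. My strategy is to reduce both determinants in \eqref{eq:first} and \eqref{eq:second} to linear combinations of such $(n-1)$-minors whose coefficients depend on $\phi^k$ only through uniformly convergent compositions. This is where the hypothesis $p>n$ enters: the compact Sobolev embedding $W^{1,p}\hookrightarrow C^0(\overline\Omega;\R^n)$ gives $\phi^k\to\phi$ uniformly, and composing with the continuous maps $V_i$, $M_2^{1/2}$ and $M_1^{-1/2}$ yields $\v_i^k\to \v_i$, $(M_2^k)^{1/2}\to(\overline{M}_2)^{1/2}$ and $(M_1^k)^{-1/2}\to(\overline{M}_1)^{-1/2}$, all uniformly on $\overline\Omega$.

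The algebraic core of the argument is the identity \eqref{eq:complement}. Fix $x\in\Omega$, write $A=\D\phi^k(x)$ and $v_i=\v_i^k(x)$, and let $R_i$ be the $n\times(n-1)$ matrix consisting of the first $n-1$ columns of some rotation $Q(v_i)\in\SO(n)$ with $Q(v_i)e_n=v_i$. Then $R_i R_i^T=\P(v_i)$, $R_i^T R_i=I_{n-1}$ and $R_i^T v_i=0$. A direct application of \eqref{eq:complement} gives
\[\det\bigl(\P(v_2)A\P(v_1)+v_2\otimes v_1\bigr)=\det\bigl(R_2^T A R_1\bigr),\]
and, substituting $\P(v_i)=R_iR_i^T$ inside the definition of $\Lambda$ and using $R_i^TR_i=I_{n-1}$,
\[\det\Lambda(M,N,A)=\det\bigl(R_2^T N^{1/2} R_2\bigr)\,\det\bigl(R_2^T A R_1\bigr)\,\det\bigl(R_1^T M^{-1/2} R_1\bigr).\]
In both cases the Cauchy--Binet formula expands the $\D\phi^k$-dependent factor $\det(R_2^T A R_1)$ as $\sum_{|\alpha|=|\beta|=n-1} c_{\alpha\beta}(v_1,v_2)\, M_{\alpha\beta}(A)$, where $M_{\alpha\beta}(A)$ runs through the $(n-1)$-minors of $A$ and the coefficients $c_{\alpha\beta}$ are polynomials in the entries of $R_1,R_2$, hence continuous functions of $v_1,v_2$.

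I then close the argument term by term: each $M_{\alpha\beta}(\D\phi^k)$ converges weakly in $L^{p/(n-1)}(\Omega)$ to $M_{\alpha\beta}(\D\phi)$ by the minors theorem, while the coefficient sequences $c_{\alpha\beta}(\v_1^k,\v_2^k)$ and, for \eqref{eq:second}, the additional scalar factors $\det\bigl(R_2^T(M_2^k)^{1/2}R_2\bigr)$ and $\det\bigl(R_1^T(M_1^k)^{-1/2}R_1\bigr)$ converge uniformly on $\overline\Omega$ by the first paragraph. The product of a uniformly convergent bounded sequence with a weakly convergent one converges weakly in the same Lebesgue space, and since $\Omega$ is bounded this weak convergence persists in the larger space $L^{p/n}(\Omega)$, which is exactly \eqref{eq:first} and \eqref{eq:second}. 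The only piece of bookkeeping deserving care is the algebraic step, namely verifying that $\det(R_2^T A R_1)$ does not depend on the particular choice of rotations $Q(v_i)\in\SO(n)$; this holds because two admissible choices differ on the right by an element of the stabilizer of $e_n$ in $\SO(n)$, namely $\SO(n-1)$, whose elements have determinant one. With this in hand, the rest is a clean combination of the classical weak continuity of minors with the uniform-continuity consequences of the embedding $W^{1,p}\hookrightarrow C^0$.
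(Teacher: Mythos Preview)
Your proof is correct and takes a genuinely different route from the paper's. The paper first replaces $\v_i^k$ by $\v_i$ using a determinant perturbation estimate, then freezes $\v_i$ to piecewise constant values on a fine cubical grid; on each cell a fixed orthogonal change of variables reduces the expression to the single minor $\Cof_{nn}\big(\D(Q_2^T\circ\phi^k\circ Q_1)\big)$, to which the classical weak continuity of minors applies. A three-term $\epsilon/\delta$ argument then reassembles the pieces. Your approach bypasses the freezing and change-of-variables steps entirely: via Cauchy--Binet you expand $\det(R_2^T A R_1)$ as a linear combination of all $(n-1)$-minors of $A$ with coefficients depending on $\v_1^k,\v_2^k$, and pass to the limit term by term using that uniformly convergent bounded coefficients times weakly convergent minors converge weakly. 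This is shorter and more direct; the paper's approach has the modest advantage of needing no algebra beyond the block identity \eqref{eq:complement}.

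One point deserves more care in your write-up. You assert that the Cauchy--Binet coefficients $c_{\alpha\beta}$ are ``polynomials in the entries of $R_1,R_2$, hence continuous functions of $v_1,v_2$''. The ``hence'' is not immediate: there is no globally continuous section $v\mapsto Q(v)$ of $\SO(n)\to\S^{n-1}$ (for $n\geq 3$ this would give a continuous orthonormal tangent frame on the sphere), so continuity of the entries of $R_i$ in $v_i$ cannot be invoked. The fix is easy and fits your framework. Since $\det(R_2^T A R_1)=\det\big(\P(v_2)A\P(v_1)+v_2\otimes v_1\big)$ is manifestly a polynomial in the entries of $v_1,v_2,A$, and since the $(n-1)$-minors of $A$ are linearly independent as polynomials in $A$, the coefficients $c_{\alpha\beta}$ are uniquely determined and are therefore themselves polynomials in $v_1,v_2$. (Equivalently, the cofactor identity $\cof Q=Q$ for $Q\in\SO(n)$ gives $c_{\alpha\beta}=(-1)^{i+j}(v_2)_i(v_1)_j$ when $\alpha,\beta$ are the complements of $\{i\},\{j\}$.) The same remark applies to the scalar factors $\det\big(R_2^T(M_2^k)^{1/2}R_2\big)$ and $\det\big(R_1^T(M_1^k)^{-1/2}R_1\big)$ in \eqref{eq:second}; these are well defined because admissible choices of $R_i$ differ by right multiplication by $\SO(n-1)$, and they equal $\det\big(\P(\v_i^k)(\cdot)\P(\v_i^k)+\v_i^k\otimes\v_i^k\big)$, which is visibly continuous in its arguments.
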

\begin{proof} 
To prove \eqref{eq:first} let $\zeta \in L^{\frac{p}{p-n}}(\Omega)$. We show that  
\[I^k:=\!\!\int_\Omega \zeta \det\big( \P(\added{\v}_2^k) \D \phi^k \P(\added{\v}_1^k) + \added{\v}_2^k \otimes \added{\v}_1^k \big )\,\dd x \; \rightarrow \; I:=\!\!\int_\Omega \zeta \det\big( \P(\added{\v}_2) \D \phi\, \P(\added{\v}_1) + \added{\v}_2 \otimes \added{\v}_1 \big )\,\dd x.\]
Moreover, we denote 
\begin{equation*}
 \overline{I}^k:=\int_\Omega \zeta \det\big( \P(\added{\v}_2) \D \phi^k \P(\added{\v}_1) + \added{\v}_2 \otimes \added{\v}_1 \big )\,\dd x\;.
\end{equation*}
Using the inequality (cf. \cite[Theorem 4.7]{Fri82})
$$|\det A - \det B | \leq C |A-B|\max(|A|, |B|)^{n-1}$$ 
and H\"{o}lder's inequality it follows that 
\begin{eqnarray*}
\left| I^k - \overline{I}^k \right| &\leq&
C \int_\Omega |\zeta| \left| \P(\added{\v}_2^k) \D \phi^k \P(\added{\v}_1^k) - \P(\added{\v}_2) \D \phi^k \P(\added{\v}_1) + \added{\v}_2^k \otimes \added{\v}_1^k - \added{\v}_2 \otimes \added{\v}_1 \right|  \\
&&\enskip \cdot \max\bigg(\big|\P(\added{\v}_2^k) \D \phi^k \P(\added{\v}_1^k) + \added{\v}_2^k \otimes \added{\v}_1^k\big|, \big|\P(\added{\v}_2) \D \phi^k \P(\added{\v}_1) + \added{\v}_2 \otimes \added{\v}_1 \big|\bigg)^{n-1} \dd x\\
&\leq& C \,\|\zeta\|_{L^\frac{p}{p-n}} \left\| \big|\D \phi^k\big|^{n-1} + 1 \right\|_{L^{\frac{p}{n-1}}} \\
&&\enskip \cdot \left\| \P(\added{\v}_2^k) \D \phi^k \P(\added{\v}_1^k) - \P(\added{\v}_2) \D \phi^k \P(\added{\v}_1) + \added{\v}_2^k \otimes \added{\v}_1^k - \added{\v}_2 \otimes \added{\v}_1 \right\|_{L^p}\\
&\leq& C \, \|\zeta\|_{L^\frac{p}{p-n}} \Big( \|\D \phi^k\|_{L^p}^{n-1} + 1 \Big) \\ 
&&\enskip \cdot \Big[ \|\D \phi^k\|_{L^p} \left( \|\P(\added{\v}_2^k)\|_{L^\infty}\|\P(\added{\v}_1^k)-\P(\added{\v}_1)\|_{L^\infty} + \|\P(\added{\v}_1)\|_{L^\infty}\|\P(\added{\v}_2^k)-\P(\added{\v}_2)\|_{L^\infty}\right) \\
&&\quad+ \Big( \|\added{\v}_1^k- \added{\v}_1\|_{L^\infty} + \|\added{\v}_2^k- \added{\v}_2\|_{L^\infty} \Big) \Big]\,.
\end{eqnarray*}
Here, we have used that 
$$\big|\P(\added{\v}_2) \D \phi^k \P(\added{\v}_1) + \added{\v}_2 \otimes \added{\v}_1 \big|^{n-1} \leq ( \big|\D \phi^k\big| + 1 )^{n-1} \leq C ( \big|\D \phi^k\big|^{n-1} + 1 )\,.$$
By the Rellich-Kondrakov embedding theorem (\cite{Ada03}, Theorem 6.3 III)  there exist
subsequences of $\added{\v}_i^k$, $i=1,2$, which for simplicity of notation are again denoted by $\added{\v}_i^k$, $i=1,2$, 
that converge uniformly to $\added{\v}_i$, $i=1,2$, respectively. 
Taking into account the Lipschitz continuity estimate $$|\P(e)-\P(f)|=|(e-f)\otimes e + f \otimes (e-f)| \leq 2 \sqrt{n} |e - f|$$ 
and that $\added{\v}_i^k \to \added{\v}_i$, $i=1,2$ in $L^\infty$ we obtain $| I^k - \overline{I}^k | \rightarrow 0$ for $k\to \infty$.
 
Next, we replace $\added{\v}_i$, $i=1,2$ in $\bar I^k$ by a piecewise constant approximation 
on a grid superimposed to the computational domain $\Omega$.
Explicitly, we consider the finitely many non empty intersection $\omega_\delta^z = \delta (z+ [0,1]^n) \cap \Omega$ of cubical cells with 
$\Omega$ for $z \in \Z^n$ and define
$$
\bar I^k_\delta := \sum_{z\in \Z^n} 
\int_{\omega_\delta^z} \zeta \det\big( \P(\added{\v}_2(z_\delta)) \D \phi^k \P(\added{\v}_1(z_\delta)) + \added{\v}_2(z_\delta) \otimes \added{\v}_1(z_\delta) \big )\dd x\,,
$$
where $z_\delta$ is any point in $\bar \Omega \cap \omega_\delta^z$ if this set is nonempty.
Using analogous estimates as above we obtain 
\begin{eqnarray*}
\left|\overline{I}_\delta^k - \overline{I}^k \right| &\leq&
C \|\zeta\|_{L^\frac{p}{p-n}} \Big( \|\D \phi^k\|_{L^p}^{n-1} + 1 \Big)\\ 
&&\enskip\cdot \Big[ \|\D \phi^k\|_{L^p} \left( \|\P(\added{\v}_{1,\delta})\|_{L^\infty}\|\P(\added{\v}_{2,\delta})-\P(\added{\v}_2)\|_{L^\infty} + \|\P(\added{\v}_1)\|_{L^\infty}\|\P(\added{\v}_{1,\delta})-\P(\added{\v}_1)\|_{L^\infty}\right) \\
&&\quad + \Big( \|\added{\v}_{2,\delta}- \added{\v}_2\|_{L^\infty} + \|\added{\v}_{1,\delta}- \added{\v}_1\|_{L^\infty} \Big) \Big]\,,
\end{eqnarray*}
where $\added{\v}_{1,\delta}$ and $\added{\v}_{2,\delta}$ are piecewise constant functions in $L^\infty$ with $\added{\v}_{1,\delta}|_{\omega_\delta^z} = \added{\v}_1(z_\delta)$ and $\added{\v}_{2,\delta}|_{\omega_\delta^z} = \added{\v}_2(z_\delta)$, respectively. 

Using the uniform continuity of $\added{\v}_2$ and $\added{\v}_1$ on $\overline{\Omega}$ we obtain that 
$\left| \overline{I}^k_\delta - \overline{I}^k \right| \leq \beta(\delta)$ for a monotonically increasing continuous function 
$\beta: \R^+_0 \to \R$ with $\beta(0) =0$. In particular the convergence is uniform with respect to $k$.
The same argument applies for the difference of $I$ and 
\begin{equation*}
 \bar I_\delta := \sum_{z\in \Z^n} \int_{\omega_\delta^z} \zeta \det\big( \P(\added{\v}_2(z_\delta)) \D \phi \P(\added{\v}_1(z_\delta)) + \added{\v}_2(z_\delta) \otimes \added{\v}_1(z_\delta) \big )\dd x
\end{equation*}
and we get $\left| \bar I_\delta - I \right|<C\beta(\delta)$.
Using \eqref{eq:complement} it follows that 
\begin{equation*}
Q(\added{\v}_{\added{2}}(z_\delta))^T \added{\Big(}\P(\added{\v}_{\added{2}}(z_\delta)) A \P(\added{\v}_{\added{1}}(z_\delta)) + \added{\v}_{\added{2}}(z_\delta) \otimes \added{\v}_{\added{1}}(z_\delta)\added{\Big)}Q(\added{\v}_{\added{1}}(z_\delta)) = 
\left(
\begin{array}{c|c}
\tilde{A} &  0 \\ \hline
0  &  1
\end{array}
\right)\;.
\end{equation*}
Thus 
$ \det ( \P(\added{\v}_2(z_\delta)) A \P(\added{\v}_1(z_\delta)) + \added{\v}_2(z_\delta) \otimes \added{\v}_1(z_\delta)) = \det (\tilde{A})$ 
represents an $(n-1)\times (n-1)$ minor of the linear mapping corresponding to the matrix $A$ with respect to different orthogonal basis in preimage space (associated with $\P(\added{\v}_1(z_\delta))$ and $\added{\v}_1(z_\delta)$)
and the image space (associated with $\P(\added{\v}_2(z_\delta))$ and $\added{\v}_2(z_\delta)$). 
Indeed, denoting $Q_i:=Q(\added{\v}_i(z_\delta))$ we have
\begin{eqnarray*}
&&\int_{\omega_\delta^z} \zeta(x) \det\big( \P(\added{\v}_2(z_\delta)) \D \phi^k(x) \P(\added{\v}_1(z_\delta)) + \added{\v}_2(z_\delta) \otimes \added{\v}_1(z_\delta) \big )\dd x\,\\
&&=\int_{\omega_\delta^z} \zeta(x) \det\big( Q_2^T \big(\P(\added{\v}_2(z_\delta)) \D \phi^k(x) \P(\added{\v}_1(z_\delta)) + \added{\v}_2(z_\delta) \otimes \added{\v}_1(z_\delta) \big) Q_1 \big)\dd x\,\\
&&=\int_{\omega_\delta^z} \zeta(x) \det\big( Q_2^T \P(\added{\v}_2(z_\delta)) Q_2 Q_2^T \D \phi^k(x) Q_1 Q_1^T \P(\added{\v}_1(z_\delta)) Q_1 + e_n \otimes e_n \big )\dd x\,\\
&&=\int_{\omega_\delta^z} \zeta(x) \det\big( \P(e_n) Q_2^T  \D \phi^k(x) Q_1 \P(e_n) + e_n \otimes e_n \big)\dd x\,\\
&&=\int_{Q_1^T\omega_\delta^z} \zeta(Q_1y) \det\big( \P(e_n) \D \big( Q_2^T \circ \phi^k \circ Q_1\big) (y) \P(e_n) + e_n \otimes e_n \big)\dd y\,\\
&&=\int_{Q_1^T\omega_\delta^z} \zeta(Q_1y) \Cof_{nn}\big( \D \big( Q_2^T \circ \phi^k \circ Q_1\big) (y) \big)\dd y\,,
\end{eqnarray*}
where we have used the orthogonal change of variables $y = Q_1^T x$ and $\Cof_{nn}$ denotes the minor obtained by erasing the last column and the last row. This change of orthogonal coordinates is fixed on each cell $\omega_\delta^z$.
Since for each $\delta$ the domain $\Omega$ is covered by finitely many cells $\omega_\delta^z$, 
using the above computation and standard weak continuity results \cite[Theorem 8.20]{Dac08} for determinants of minors of the Jacobian we obtain that $\bar I^k_\delta \rightarrow \bar I_\delta$ for 
$k\to \infty$.
Finally, for given $\epsilon$ we first choose $\delta$ small enough to ensure that $\left| \bar I_\delta - I \right| + \left| \bar I^k_\delta - \bar I^k \right|\leq \tfrac{\epsilon}{2}$. Then we choose $k$ large enough
to ensure that $\left| I^k - \bar I^k \right| + \left| \bar I^k_\delta - \bar I_\delta \right|\leq \tfrac{\epsilon}{2}$. This proves that a subsequence of $I^k$ converges to $I$ for $k\to \infty$. 
Since the limit does not depend on the subsequence, we finally obtain weak convergence for the whole sequence.
%%%%%%%%%%

To prove \eqref{eq:second}, consider the three sequences of matrix functions
\begin{equation}\label{eq:factors}
\P(\added{\v}_2^k) (M_2^k)^{\frac{1}{2}} \P(\added{\v}_2^k) + \added{\v}_2^k \otimes \added{\v}_2^k,\; 
\P(\added{\v}_2^k) \D \phi^k \P(\added{\v}_1^k) + \added{\v}_2^k \otimes \added{\v}_1^k \text{ and }
\P(\added{\v}_1^k) (M_1^k)^{-\frac{1}{2}} \P(\added{\v}_1^k) + \added{\v}_1^k \otimes \added{\v}_1^k.
\end{equation}
The determinant of the second expression above converges weakly as $k \to \infty$ by the first part of the lemma, while the determinants of the first and third can be assumed to converge uniformly.
Moreover, the matrices in \eqref{eq:factors} have the block structure shown in \eqref{eq:complement}, so multiplying the three together and taking into account that $\P$ is a projection (depending on the argument) recovers the matrix
\[\P(\added{\v}_2^k) (M_2^k)^{\frac{1}{2}} \P(\added{\v}_2^k) \D \phi^k \P(\added{\v}_1^k) (M_1^k)^{-\frac{1}{2}} \P(\added{\v}_1^k) + \added{\v}_2^k \otimes \added{\v}_1^k\]
appearing in the statement. Multiplicativity of the determinant and the fact that a product of strongly converging and one weakly converging sequence converges weakly then finishes the proof.
\end{proof}

%Notice that in the statement above the regularity is not optimal, since the determinants of $(n-1)$-dimensional minors (in fixed directions) converge weakly in $L^{\frac{p}{n-1}}$, but our proof gives only weak convergence in $L^{\frac{p}{n}}$. This can be improved through a different choice of exponents in the application of H\"{o}lder's inequality, like for example $(p+1-n)/p, (n-1-\varepsilon)/p, \varepsilon/p$, for any $\varepsilon \in (0,1)$.

We are now in a position to prove existence of a minimizing deformation for the \added{hyper}surface matching energy $E$ in a suitable set of admissible deformations.
Of particular difficulty is that derivatives of $\dist_2$ are not defined in the whole of $\Omega$ and that in the functional these derivatives are evaluated at deformed positions.
We handle this by ensuring that the involved deformations are such that terms involving these derivatives are not evaluated near the singularities.
We obtain the following theorem:

\begin{theorem}[Existence of minimizing deformations]\label{thm:existence} Let $\M_1,\M_2$ be $C^{2,1}$ compact embedded \added{hyper}surfaces in $\R^n$ such that a $C^1$ diffeomorphism $\varphi: \M_1 \to \M_2$ exists between them. 

Assume further that
\begin{equation}\label{eq:sigmaCond}0 < \sigma < \min ( \Dist( \M_1, \Sing \dist_1 ), \Dist( \M_2, \Sing \dist_2 ) ),\end{equation}
where $\Sing\dist_i$ is the set of points where $\dist_i$ is not differentiable, and that $\added{\mathcal{C}}:\R^{n \times n} \to \SPD(n)$ is continuous. Then there exists a constant
$0 < \nu_0 := \nu_0(\Omega, \M_1, \M_2, \sigma, p, \alpha_p)$
such that for $0 < \nu \leq \nu_0$, the functional $E_\nu$ has at least one minimizer $\phi$ among deformations in the space $W^{1,p}_0(\Omega; \R^{n})+\id$. Moreover, $\phi$ is a homeomorphism of $\Omega$ into $\Omega$, and $\phi^{-1}\in W^{1,\theta}(\Omega; \R^{n})$, where $\theta$ is given by $\theta=q(1+s)/(q+s)$.
\end{theorem}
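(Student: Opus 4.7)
The plan is to apply the direct method of the calculus of variations, with the main work being the joint handling of weak convergence of $\D\phi^k$ and strong convergence of $\phi^k$ inside the integrands. First I would construct an explicit competitor $\Phi$ using the assumed $C^1$ diffeomorphism $\varphi\colon\M_1\to\M_2$: extending $\varphi$ via normal coordinates as $\Phi(y+t\n_1(y)):=\varphi(y)+t\,\n_2(\varphi(y))$ on a narrow band around $\M_1$ and smoothly connecting to the identity away from that band yields a deformation for which each of the four energy contributions is bounded by a constant depending only on $\Omega,\M_1,\M_2,\sigma,p,\alpha_p$ and, crucially, \emph{independently of} $\nu$ (because $\dist_2\circ\Phi=\dist_1$ inside the narrow band, making $E_{\text{match}}[\Phi]=0$). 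Thus $\inf E_\nu \leq C$, and for a minimizing sequence $\{\phi^k\}\subset W^{1,p}_0(\Omega;\R^n)+\id$ the $E_{\text{vol}}$-bound yields uniform control of $\D\phi^k$ in $L^p$, $\cof\D\phi^k$ in $L^q$, and $(\det\D\phi^k)^{-1}$ in $L^s$. After extracting a subsequence, $\phi^k\rightharpoonup\phi$ weakly in $W^{1,p}$ and, since $p>n$, Rellich--Kondrachov gives uniform convergence of $\phi^k$, even in $C^{0,1-n/p}$.

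Next I would establish weak lower semicontinuity of each term. For $E_{\text{vol}}$, polyconvexity in $(\D\phi,\cof\D\phi,\det\D\phi)$ together with the weak convergences of minors (classical under the above bounds) gives lower semicontinuity by Ball \cite{Bal77}; the set $\{\det\D\phi>0\text{ a.e.}\}$ is closed under this convergence. $E_{\text{match}}$ is continuous under uniform convergence using the Lipschitz regularity of $\dist_2$. For $E_{\text{mem}}$ and $E_{\text{bend}}$ the density $W$ splits into a Frobenius-norm quadratic, a squared-determinant term, and an exponential-of-determinant term. Identity \eqref{eq:complement} reduces the Frobenius-norm piece to $1+|\tilde B|^2$ with $\tilde B$ a convex polynomial in $\D\phi$ with coefficients continuous in $(x,\phi(x))$; an Ioffe--Olech-type theorem then gives its lower semicontinuity using uniform convergence of $\phi^k$. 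For the two determinant-dependent pieces, Lemma \ref{lem:cont} applies: \eqref{eq:first} handles $\det\big(\Di{tg}\phi+(\n_2\circ\phi)\otimes\n_1\big)$ entering $E_{\text{mem}}$, and \eqref{eq:second} handles $\det\Lambda$ entering $E_{\text{bend}}$; convexity and monotonicity of $t\mapsto t^2$ and $t\mapsto e^{-(t-1)}$ then promote weak $L^{p/n}$-convergence of the determinants to lower semicontinuity of the corresponding integrals.

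The main obstacle is that Lemma \ref{lem:cont} requires the coefficients $V_i=\n_i$ and $M_i=\mathcal{C}(\mathcal{S}_i^{ext})$ to be continuous on all of $\overline{\Omega}\times\R^n$, whereas $\n_2$ and $\mathcal{S}_2^{ext}$ are defined and continuous only on $\overline{\Omega}\setminus\overline{\Sing\dist_2}$. This is where the threshold $\nu_0$ enters. From $E_{\text{match}}[\phi^k]\leq E_\nu[\Phi]+1\leq C$, multiplying out the $1/\nu$ one obtains $\int(\eta_\sigma\circ\dist_1)\,|\dist_2\circ\phi^k-\dist_1|^2\,\dd x\leq C\nu$. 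The uniform $W^{1,p}$-bound with $p>n$ makes $x\mapsto \dist_2(\phi^k(x))-\dist_1(x)$ uniformly H\"{o}lder-continuous on the compact set $\supp(\eta_\sigma\circ\dist_1)$, and interpolation between $L^2$ and $C^{0,1-n/p}$ upgrades the $L^2$-smallness of order $\sqrt{\nu}$ to an $L^\infty$-smallness of order $\nu^\beta$ for some $\beta>0$. Choosing $\nu_0$ so that this $L^\infty$-bound is strictly less than $\sigma$ forces $\phi^k(\supp(\eta_\sigma\circ\dist_1))\subset\{|\dist_2|<\sigma\}$, which by \eqref{eq:sigmaCond} lies strictly inside the smooth region of $\dist_2$. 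Extending $\n_2$ and $\mathcal{S}_2^{ext}$ to continuous functions on $\overline{\Omega}\times\R^n$ in an arbitrary way (the extension leaves all integrals unchanged since only evaluations inside the smooth region occur) then makes Lemma \ref{lem:cont} applicable and closes the lower-semicontinuity argument, yielding the minimizer $\phi$.

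Finally, the homeomorphism and inverse-regularity statement follows from the classical injectivity theorem for polyconvex elasticity. The conditions $p,q>n$ and $s>(n-1)q/(q-n)$ together with the Dirichlet boundary constraint $\phi-\id\in W^{1,p}_0$ place the minimizer within the scope of Ball \cite{Bal81}: $\phi$ has a continuous representative that is an injection of $\Omega$ into itself, agrees with the identity on $\partial\Omega$, and is therefore a homeomorphism. The exponent $\theta=q(1+s)/(q+s)$ is the sharp one obtained by a H\"{o}lder estimate applied to the change-of-variables identity $\int_{\phi(\Omega)}|\D\phi^{-1}|^\theta\,\dd y=\int_\Omega |\cof\D\phi|^\theta(\det\D\phi)^{1-\theta}\,\dd x$, using the $L^q$-bound on $\cof\D\phi$ and the $L^s$-bound on $(\det\D\phi)^{-1}$.
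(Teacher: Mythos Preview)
Your overall strategy coincides with the paper's: construct a competitor via normal extension of $\varphi$, use $E_{\text{vol}}$ for coercivity, invoke Lemma~\ref{lem:cont} for weak continuity of the relevant determinants, handle the singularities of $\dist_2$ via an a priori estimate driven by $E_{\text{match}}$, and conclude injectivity from Ball \cite{Bal81}. One small slip: the $L^\infty$-bound you need on $\dist_2\circ\phi^k-\dist_1$ is not ``strictly less than $\sigma$'' but strictly less than $\Dist(\M_2,\Sing\dist_2)-\sigma$, since $|\dist_1|\leq\sigma$ on the support and you want $|\dist_2\circ\phi^k|<\Dist(\M_2,\Sing\dist_2)$.

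There is, however, a genuine gap in your a priori estimate. You claim that the weighted bound $\int(\eta_\sigma\circ\dist_1)\,|\dist_2\circ\phi^k-\dist_1|^2\leq C\nu$ together with uniform H\"older continuity of $f_k:=\dist_2\circ\phi^k-\dist_1$ gives $\|f_k\|_{L^\infty(\{|\dist_1|\leq\sigma\})}\leq C\nu^\beta$ by interpolation. This fails because $\eta_\sigma\in C_0^\infty$ vanishes to infinite order at $\{|\dist_1|=\sigma\}$: a function $f_k$ concentrated in a thin layer near that boundary can have arbitrarily small weighted $L^2$-norm while keeping $\|f_k\|_\infty$ of order one and $[f_k]_{C^{0,\alpha}}$ bounded. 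The paper circumvents this with an extra parameter $\hat\sigma\in(0,\sigma)$: on the inner set $\{|\dist_1|\leq\sigma-\hat\sigma\}$ one has $\eta_\sigma\geq\eta_\sigma(\sigma-\hat\sigma)>0$, so Ehrling's lemma (applied to $\eta_\sigma\circ\dist_1\cdot f_k$, whose $W^{1,p}$-norm is controlled by $E_\nu[\hat\varphi]$) yields genuine $L^\infty$-smallness there; on the remaining strip of width $\hat\sigma$ the uniform H\"older bound contributes only $C\hat\sigma^\alpha$. One then chooses first $\hat\sigma$, then the Ehrling parameter $\epsilon$, and finally $\nu$ small. Your interpolation idea is the right one, but it must be executed on the inner set and combined with this strip argument; as written, the step does not go through.
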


\begin{proof} We proceed in several steps.

%%%%%%%%%%%%%%%%%%%%%%%%
\noindent \emph{Step 1: Coercivity.}  First we point out the coercivity enjoyed by our functional. Using the Poincar\'{e} and Morrey inequalities (\cite{Leo09}, Theorem 12.30 and 11.34), and the Dirichlet boundary conditions we have
\begin{equation}\label{eq:coercivity}\|\phi\|_{C^{0,\alpha}(\Omega)} \leq C \|\phi\|_{W^{1,p}(\Omega)} \leq C ( 1 + \|\D\phi\|_{L^p(\Omega)}) \leq C (1 + E_\nu[\phi]^{\frac{1}{p}}),\end{equation}
for any $\phi \in W^{1,p}_0(\Omega)+\id$ and $\alpha=1-n/p$.
\\
%%%%%%%%%%%%%%%%%%%%%%%%
\noindent \emph{Step 2: Lower semicontinuity along sequences of constrained deformations.}
For the remainder of the proof, a deformation $\phi \in W^{1,p}_0(\Omega; \R^n) + \id$, $p>n$ is 
termed $\rho$-admissible for $\rho > 0 $, if 
\begin{itemize}
 \item $E_{\text{vol}}[\phi]< +\infty$, 
 \item $\det \D \phi(x) > 0$ for a.e. $x \in \Omega$, and 
 \item for all $x \in \supp\, \big( \eta_\sigma \circ \dist_1 \big)$ 
       and every $y \in \Sing(\dist_2)$, we have  $\left| \phi(x) - y \right|\geq \rho$.
\end{itemize}
Notice that since $p>n$, $\phi$ has a unique continuous representative, so the third property is well defined.

First, notice that with the assumption \eqref{eq:sigmaCond} we have 
\begin{equation}
\label{eq:dist}
 \supp (\eta_\sigma \circ \dist_i) = \{|\dist_i|\leq \sigma\} \subset \Omega \setminus \overline{\Sing(\dist_i)}\,,\quad i=1,2.
\end{equation}
Let $\phi^k$ be a sequence of $\rho$-admissible deformations with $E_{\text{vol}}[\phi^k]\leq C$. 
By \eqref{eq:coercivity} and using the Banach-Alaoglu and Rellich-Kondrakov theorems, 
a subsequence (again denoted by $(\phi^k)$) converges to a deformation $\phi$, both in the $W^{1,p}$-weak and 
uniform topologies. %and thus $\phi$ cannot have singularities in $\Omega \setminus \overline{\Sing(\dist_1)}$.

Now, we have (\cite[Theorem 8.20]{Dac08})
\begin{equation}\label{eq:weakConvDet}
(\det \D \phi^k, \cof \D \phi^k) \rightharpoonup (\det \D \phi, \cof \D \phi ) 
\text{ in } L^{\frac{p}{n}}(\Omega) \times \left( L^{\frac{p}{n-1}}(\Omega)\right)^{n^2}.
\end{equation}

Additionally, since \eqref{eq:weakConvDet} holds
and because $E_\nu[\phi_k]$ is \added{bounded}, $\int_\Omega (\det \D \phi_k)^{-s} d x$ 
is \added{bounded} by the definition of $\hat{W}$ and 
$\det \D \phi_k \geq 0$ a.e. Together with \eqref{eq:weakConvDet}, we have
\begin{equation}
\label{eq:admis2}
\det \D \phi(x) > 0 \text{ a.e.,} 
\end{equation}
so that $\phi$ is again $\rho$-admissible.
 
Notice also that by a.e. positivity of the determinants, \eqref{eq:weakConvDet} and a standard lower 
semicontinuity result for convex integrands (see e.g. \cite[Theorem 3.23]{Dac08}) implies
\[E_{\text{vol}}[\phi] \leq \liminf_{k \to \infty} E_{\text{vol}}[\phi^k],\]
and uniform convergence of $\phi^k$ immediately leads to
\[E_{\text{match}}[\phi] = \lim_{k \to \infty} E_{\text{match}}[\phi^k].\]

We claim that under the assumptions of this theorem, we also have that
\begin{equation}
\label{eq:Emem-lsc}
E_{\text{mem}}\left[\,\phi\,\right] \leq \liminf_{k \to \infty} E_{\text{mem}}[\phi^k]
\end{equation}
and 
\begin{equation}
\label{eq:Ebend-lsc}
E_{\text{bend}}\left[\,\phi\,\right] \leq \liminf_{k \to \infty} E_{\text{bend}}[\phi^k].
\end{equation}
To see this, notice that $\phi^k, \phi$ being $\rho$-admissible ensures that the normal vectors satisfy
\[\n_1,\, \n_2 \circ \phi^k,\, \n_2 \circ \phi \in C^{0}( \{|\dist_1|\leq \sigma\}; \R^{n}).\]
Consequently, the first part of Lemma \ref{lem:cont} \added{(with $V_i=\n_i$)} implies
\begin{equation}
\label{eq:DttConv}\begin{aligned}\raisebox{2pt}{$\chi$}_{\{|\dist_1|\leq \sigma\}}&\Big( \Di{tg}\phi^k, \det\big(\Di{tg}\phi^k\added{+ (\n_2\circ \phi^k) \otimes \n_1}\big)\Big) \\
\rightharpoonup \raisebox{2pt}{$\chi$}_{\{|\dist_1|\leq \sigma\}}&\Big( \Di{tg}\phi, \det\big(\Di{tg}\phi\added{+ (\n_2\circ \phi) \otimes \n_1}\big)\Big)\text{ in } \left(L^p(\Omega) \right)^{n^2} \times L^{\frac{p}{n}}(\Omega),\end{aligned}\end{equation}
with $\raisebox{2pt}{$\chi$}_{\{|\dist_1|\leq \sigma\}}$ denoting the indicator function. 
Combining \eqref{eq:DttConv} with the polyconvexity of $W$, defining the function $E_{\text{mem}}$, both introduced in 
\eqref{def:Emem} we find the assertion \eqref{eq:Emem-lsc}.

Furthermore, by our assumptions on $\M_i$ (see section \ref{sec:deformation}), we have that 
$$\raisebox{2pt}{$\chi$}_{\{|\dist_i|\leq \sigma\}}\mathcal{S}_i = \raisebox{2pt}{$\chi$}_{\{|\dist_i|\leq \sigma\}}\D^2 \dist_i \in C^0( \overline{\Omega}; \R^{n \times n}).$$
%Therefore, we can use the continuity and uniform positivity of the regularization function $\mathcal{R}$ and a 
Since $\mathcal{C}$ produces uniformly positive matrices, we have $\raisebox{2pt}{$\chi$}_{\{|\dist_1|\leq \sigma\}}(\mathcal{C}(\mathcal{S}^{ext}_1))^{-1}\in C^0(\overline{\Omega}; \R^{n \times n})$. We can then use a continuity result for square roots of nonnegative definite matrix-valued functions defined on $\Omega$ \cite[Theorem 1.1]{ChHu97} to see that
\[\raisebox{2pt}{$\chi$}_{\{|\dist_1|\leq \sigma\}}(\mathcal{C}(\mathcal{S}^{ext}_1))^{-\frac{1}{2}},\, \raisebox{2pt}{$\chi$}_{\{|\dist_1|\leq \sigma\}}( \mathcal{C}(\mathcal{S}^{ext}_2) \circ \phi^k )^{\frac{1}{2}},\, \raisebox{2pt}{$\chi$}_{\{|\dist_1|\leq \sigma\}}(\mathcal{C}(\mathcal{S}^{ext}_2) \circ \phi )^{\frac{1}{2}} \in C^0(\overline{\Omega}; \R^{n \times n}).\]
The second part of Lemma \ref{lem:cont} implies the weak convergence
\begin{equation*}
 \begin{aligned}
  ~ & \raisebox{2pt}{$\chi$}_{\{|\dist_1|\leq \sigma\}}\Big( \Lambda(\mathcal{C}(\mathcal{S}^{ext}_1(\phi^k)),\mathcal{C}(\mathcal{S}^{ext}_2(\phi^k)) ,\D \phi^k), \det\Lambda(\mathcal{C}(\mathcal{S}^{ext}_1),\mathcal{C}(\mathcal{S}^{ext}_2(\phi^k),\D \phi^k)\Big) \\
  &\xrightharpoonup{L^{\frac{p}{n}}}
  \raisebox{2pt}{$\chi$}_{\{|\dist_1|\leq \sigma\}}\Big( \Lambda(\mathcal{C}(\mathcal{S}^{ext}_1),\mathcal{C}(\mathcal{S}^{ext}_2(\phi)) ,\D \phi), \det\Lambda(\mathcal{C}(\mathcal{S}^{ext}_1),\mathcal{C}(\mathcal{S}^{ext}_2(\phi),\D \phi)\Big),
 \end{aligned}
\end{equation*}
from which \eqref{eq:Ebend-lsc} follows by using the polyconvexity of $W$.
\\
%%%%%%%%%%%%%%%%%%%%%%%%
\noindent \emph{Step 3: Existence of minimizers restricted to admissible deformations.} 
Since we have already seen that the set of $\rho$-admissible deformations is weakly closed and weakly compact, 
and that every term of $E$ is weakly lower semicontinuous on this set, we just need to check that for all fixed 
$\nu > 0$, the set of $\rho$-admissible deformations, with adequate $\rho$, is not empty.

For some given $\sigma$ satisfying $\Dist( \M_2, \Sing \dist_2 ) - \sigma > 0$ let $\rho$ satisfy
\begin{equation}
\label{eq:const_rho}
 0 < \rho < \Dist( \M_2, \Sing \dist_2 ) - \sigma\;.
\end{equation}
We construct a deformation $\hat{\varphi}$, which is $\rho$-admissible and satisfies $E_\nu[\hat{\varphi}] < \infty$.
By assumption, there exists a diffeomorphism $\varphi:\M_1\to \M_2$. 
Thus, we construct an extension of this diffeomorphism to 
$\{|\dist_1|\leq \sigma\}$ along the normal directions using 
\begin{equation}
\label{eq:diff-ext}
\hat{\varphi}( x+s\n_1(x) ):=\varphi(x)+s \n_2(\varphi(x)), \text{ for }x \in \M_1, -\sigma \leq s \leq \sigma.
\end{equation}
We can then extend $\hat{\varphi}$ to the inside and outside components $\Omega_i, \Omega_o$ of $\Omega \setminus \{|\dist_1|\leq \sigma\}$ by solving the minimization problems for $E_{\vol}$
with Dirichlet boundary conditions given by \eqref{eq:diff-ext} on $\partial \Omega_i$ and $\partial \Omega_o \setminus \partial \Omega$, and by $\hat{\varphi}(x)=x$ on $\partial \Omega$. For the resulting $\hat{\varphi}$ we have
\[E_{\text{match}}[\hat{\varphi}]=0,\, E_{\text{vol}}[\hat{\varphi}]<\infty,\, E_{\text{mem}}[\hat{\varphi}]<\infty,\, E_{\text{bend}}[\hat{\varphi}]<\infty,\]
where the first two statements follow by construction, and the last two by virtue of $\varphi$ being a diffeomorphism and the choice of $\sigma$. Moreover, we note that since $\hat{\varphi}$ has finite energy and the growth conditions assumed for 
$\hat{W}$ (see \eqref{def:Evol}), the condition $\det \D \hat{\varphi}(x) > 0$ for a.e. $x$ is also satisfied \cite{Bal81}.
\\
%%%%%%%%%%%%%%%%%%%%%%%%
\noindent \emph{Step 4: A priori estimate to remove the constraint.} 
Next, we show that for any $\rho$ satisfying \eqref{eq:const_rho} there exists a parameter $\nu_0 > 0$ such that for all $0 < \nu < \nu_0$ the 
constrained minimizers of $E_\nu$ subject to \eqref{eq:sigmaCond} solves the unconstrained optimization problem, 
consisting in minimizing $E_\nu$ on $W_0^{1,p}+\id$.

To this end, we verify that every $\phi$ that satisfies 
\begin{equation}\label{eq:energybound}
E_\nu[ \phi ] \leq E_\nu[ \hat{\varphi}]
\end{equation}
is $\rho$-admissible. It is immediate from \eqref{eq:energybound} that $E_{\text{vol}}(\phi) < + \infty$, and from the definition of $\hat{W}$ in \eqref{def:Evol} it follows with the same arguments as in \eqref{eq:admis2} that $\det \phi > 0$ a.e.

We prove now that for all deformations $\phi$ satisfying \eqref{eq:energybound} also satisfy
\begin{equation}
\label{eq:distance}
\|\dist_2 \circ \phi \|_{L^\infty(\{|\dist_1| \leq \sigma\})} \leq \Dist(\M_2,\Sing \dist_2) - \rho\;.
\end{equation}
This is sufficient because from \eqref{eq:distance} it follows for all $x$ satisfying $|\dist_1(x)| \leq \sigma$
by the triangle inequality that 
\begin{equation*}
\begin{aligned}
\rho &\leq \Dist(\M_2,\Sing \dist_2) - \|\dist_2 \circ \phi \|_{L^\infty(\{|\dist_1| \leq \sigma\})}\\
& = \Dist(\M_2,\Sing \dist_2) - \Dist(\phi(x),\M_2) \\
&\leq \Dist(\phi(x),\Sing \dist_2),
\end{aligned}
\end{equation*}
which is the third property of a $\rho$-admissible deformation $\phi$.
      
To prove \eqref{eq:distance} we use the triangle inequality and estimate
\begin{equation}\label{eq:geoestimate1}
\|\dist_2 \circ \phi \|_{L^\infty(\{|\dist_1| \leq \sigma\})} \leq 
\sigma + \|\dist_2 \circ \phi  - \dist_1 \|_{L^\infty(\{|\dist_1| \leq \sigma\})}.
\end{equation}
By the monotonicity of $\eta_\sigma$ and the fact that the signed distance functions $\dist_i$ are 
Lipschitz continuous with constant $1$ we have, for each $\hat{\sigma} \in (0, \sigma)$ that
\begin{eqnarray}\label{eq:geoestimate2}
&& \|\dist_2 \circ \phi  - \dist_1 \|_{L^\infty(\{|\dist_1| \leq \sigma\})} \nonumber \\
&&\leq \left(1 \!+\! \| \phi \|_{C^{0,\alpha}(\{\sigma - \hat{\sigma} \leq |\dist_1| \leq \sigma\})}\right)\hat{\sigma}^\alpha + \frac{ \|\eta_\sigma \circ \dist_1 (\dist_2 \circ \phi  - \dist_1) \|_{L^\infty( \{ |\dist_1| < \sigma -\hat{\sigma}\} )}}{\eta_\sigma(\sigma  - \hat{\sigma} )}.
\end{eqnarray}
Estimates \eqref{eq:coercivity} and \eqref{eq:energybound} imply in turn
\begin{equation}\label{eq:geoestimate3}
\begin{gathered}
\| \phi \|_{C^{0,\alpha}(\{\sigma - \hat{\sigma} \leq |\dist_1| \leq \sigma\})} \leq C \| \phi \|_{W^{1,p}(\Omega)} 
\leq C (1+E_\nu[\hat{\varphi}]^{\frac{1}{p}}).
\end{gathered}
\end{equation}
Finally, combining \eqref{eq:geoestimate1}, \eqref{eq:geoestimate2}, and \eqref{eq:geoestimate3} we obtain
\begin{eqnarray} \nonumber
&&\|\dist_2 \circ \phi \|_{L^\infty(\{|\dist_1| \leq \sigma\})}  \\
&& \leq \sigma + \left(1 + C ( 1 +  E_\nu[\hat{\varphi}]^{\frac{1}{p}} ) \right)\hat{\sigma}^\alpha + \frac{1}{\eta_\sigma(\sigma  - \hat{\sigma} )}
\|\eta_\sigma \circ \dist_1 (\dist_2 \circ \phi  - \dist_1) \|_{L^\infty( \{ |\dist_1| < \sigma -\hat{\sigma}\} )}.
\label{eq:geoestimate}
\end{eqnarray}
Now we can apply Ehrling's lemma \cite[Theorem 7.30]{ReRo04} for the embeddings
$ W^{1,p}(\Omega) \subset \subset  L^\infty(\Omega) \subset L^2(\Omega)$
to control the last term in \eqref{eq:geoestimate}. Taking into account the Poincar\'{e} inequality and Dirichlet boundary conditions, we obtain for any $\epsilon > 0$ a constant $C(\epsilon) > 0$ such that\begin{eqnarray} \nonumber 
&&\|\eta_\sigma \circ \dist_1 (\dist_2 \circ \phi  - \dist_1) \|_{L^\infty( \{ |\dist_1| < \sigma -\hat{\sigma}\} )} \leq \|\eta_\sigma \circ \dist_1 (\dist_2 \circ \phi  - \dist_1) \|_{L^\infty( \Omega )} \\
&& \leq C(\epsilon)\|\eta_\sigma \circ \dist_1 (\dist_2 \circ \phi  - \dist_1) \|_{L^2( \Omega )} \! + \epsilon\, C \! \left(\|\nabla ( \eta_\sigma \circ \dist_1 (\dist_2 \circ \phi  - \dist_1) )\|_{L^p( \Omega )}+1\right).
\label{eq:ehrlingestimate1}
\end{eqnarray}
Now, for the first term in the right hand side of \eqref{eq:ehrlingestimate1} we can estimate
\begin{equation}\label{eq:ehrlingestimate2}
\|\eta_\sigma \circ \dist_1 (\dist_2 \circ \phi  - \dist_1) \|_{L^2( \Omega )} = \nu^{\frac{1}{2}} E_{\text{match}}[\phi]^{\frac{1}{2}} \leq \nu^{\frac{1}{2}} E_\nu[\hat{\varphi}]^{\frac{1}{2}}.
\end{equation}
For the second term, denoting $\diam \Omega = \sup_{x,y \in \Omega}|x-y|$, 
\begin{eqnarray} 
&&\|\nabla ( \eta_\sigma \circ \dist_1 (\dist_2 \circ \phi  - \dist_1) )\|_{L^p( \Omega )}\nonumber  \\ 
&&\leq \|\nabla ( \eta_\sigma \circ \dist_1 ) ( \dist_2 \circ \phi  - \dist_1 ) \|_{L^p(\Omega)} + \| ( \eta_\sigma \circ \dist_1 ) \nabla ( \dist_2 \circ \phi  - \dist_1 ) \|_{L^p(\Omega)} + 1\nonumber \\
&&\leq C \nu^{\frac{1}{p}} \left( \| \dist_2 \circ \phi - \dist_1 \|^{\frac{p-2}{p}}_{L^\infty( \Omega )} E_{\text{match}}[\phi]^{\frac{1}{p}}\right) + C \big( \| \D \phi \|_{L^p(\Omega)} + \| \nabla \dist_1 \|_{L^{p}(\Omega)} + 1 \big) \nonumber\\
&&\leq C \nu^{\frac{1}{p}} \left( ( \|\phi\|_{C^{0,\alpha}(\Omega)} + 2 \diam \Omega )^{\frac{p-2}{p}}E_\nu[\hat{\varphi}]^{\frac{1}{p}} \right) + C \! \left( E_\nu[\hat{\varphi}]^{\frac{1}{p}} + 1 \right) \nonumber\\
&&\leq C \nu^{\frac{1}{p}} \left( ( 1 + E_\nu[\hat{\varphi}]^{\frac{1}{p}} )^{\frac{p-2}{p}} E_\nu[\hat{\varphi}]^{\frac{1}{p}}\right) + C\! \left( E_\nu[\hat{\varphi}]^{\frac{1}{p}} + 1\right),
\label{eq:ehrlingestimate3}
\end{eqnarray}
where we have applied the product rule, the definition of $E_{\text{match}}$, 
$\eta_\sigma \in C^\infty_0$, $\eta_\sigma \leq C$, that $| \nabla \dist_i | = 1$ a.e., $i=1,2$, the chain rule, and \eqref{eq:energybound}. The use of the chain rule is justified by \cite[Theorem 2.2]{MaMi72}, since $\dist_2$ has Lipschitz constant $1$. 

Together, \eqref{eq:ehrlingestimate1}, \eqref{eq:ehrlingestimate2}, and \eqref{eq:ehrlingestimate3} imply
\begin{eqnarray} \nonumber
&&\|\eta_\sigma \circ \dist_1 (\dist_2 \circ \phi  - \dist_1) \|_{L^\infty( \{ |\dist_1| < \sigma -\hat{\sigma}\} )} \\
&&\leq \nu^{\frac{1}{p}} \left( C(\epsilon) \nu^{\frac{1}{2}-\frac{1}{p}} E_\nu[\hat{\varphi}]^{\frac{1}{2}} + \epsilon\, C ( 1 + E_\nu[\hat{\varphi}]^{\frac{1}{p}} )^{\frac{p-2}{p}} E_\nu[\hat{\varphi}]^{\frac{1}{p}}\right) + \epsilon\, C\! \left( E_\nu[\hat{\varphi}]^{\frac{1}{p}} + 1\right).
\label{eq:ehrlingestimate}
\end{eqnarray}
In light of \eqref{eq:geoestimate} and \eqref{eq:ehrlingestimate}, and since $E_\nu[\hat{\varphi}]$ is independent of $\nu$, we can now choose first $\hat{\sigma}$, then $\epsilon$ and finally $\nu$ small enough to obtain
\[ \| \dist_2 \circ \phi \|_{L^\infty(\{|\dist_1| \leq \sigma\})} \leq \sigma + ( \Dist( \M_2, \Sing \dist_2 ) - \sigma - \rho) \leq \Dist( \M_2, \Sing \dist_2 ) - \rho\;.\]
%%%%%%%%%%%%%%%%%%%%%%%%
\noindent \emph{Step 5: Injectivity.}
The injectivity and regularity of the inverse follow by the growth conditions satisfied by $E_\text{vol}$ and classical results of Ball \cite[Theorems 2 and 3]{Bal81}. \added{Note that Theorem 3 in \cite{Bal81} is stated in the mechanical application context in dimension $n=3$, but it holds also in $\R^n$ following the same proof and using the condition $s>(n-1)q/(q-n)$.}
\end{proof}

%We have particularized the statement of Theorem \ref{thm:existence} to the case of %surfaces ($n=3$) and Dirichlet boundary conditions to use standard growth %assumptions on $\hat{W}$ that ensure global invertibility of deformations of %finite energy in $\R^3$. However, the existence results generalizes easily to %different dimensions $n$.
We have particularized the statement of Theorem \ref{thm:existence} to the case of  Dirichlet boundary conditions to ensure global invertibility. In fact, we also have existence of minimizing deformations for the case of Neumann boundary conditions.

\begin{corollary}[Natural boundary conditions]\label{cor:neumann}Under the assumptions of Theorem \ref{thm:existence} above, there exists a constant 
\[ 0 < \nu_N = \nu_N(\Omega, \M_1, \M_2, \sigma, p, \alpha_p)\]
such that for $0 < \nu \leq \nu_N$, the functional $E_\nu$ possesses at least one minimizer among deformations in the space $W^{1,p}(\Omega; \R^{n})$.
\end{corollary}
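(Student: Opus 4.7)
The plan is to mimic the proof of Theorem \ref{thm:existence} step by step, with the only substantive modification being the coercivity estimate, since Poincar\'{e}'s inequality with zero boundary values is no longer available. The weak lower semicontinuity arguments on the set of $\rho$-admissible deformations (Step 2) and the construction of a competitor $\hat{\varphi}$ (Step 3) go through unchanged; in fact, $\hat{\varphi}$ is easier to construct here, since we drop the requirement $\hat{\varphi}(x)=x$ on $\partial\Omega$ and can extend the diffeomorphism from the narrow band to all of $\Omega$ via any $E_{\vol}$-minimizing filling with natural boundary conditions. The injectivity conclusion of Step 5 is not claimed in the corollary and need not be addressed.

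To replace the bound \eqref{eq:coercivity} in Step 1, I would combine $E_{\vol}$ with the matching penalty. Let $\phi \in W^{1,p}(\Omega;\R^n)$ satisfy $E_\nu[\phi]\leq C$. Then $\|\D\phi\|_{L^p(\Omega)}\leq C$ from the growth of $\hat{W}$. Since $\eta_\sigma$ is strictly decreasing on $[0,\sigma]$, there exist $c_0>0$ and a set $K\subset\supp(\eta_\sigma\circ\dist_1)$ of positive measure on which $\eta_\sigma\circ\dist_1\geq c_0$. The matching term then gives
$$\int_K (\dist_2\circ\phi-\dist_1)^2\,\dd x \leq \frac{1}{c_0}\int_\Omega (\eta_\sigma\circ\dist_1)(\dist_2\circ\phi-\dist_1)^2\,\dd x = \frac{\nu}{c_0}E_{\text{match}}[\phi]\leq \frac{\nu C}{c_0}.$$
A Chebyshev argument produces $A\subset K$ with $|A|\geq |K|/2$ and $|\dist_2\circ\phi-\dist_1|\leq\lambda$ on $A$, with $\lambda$ depending only on $\nu, C$ and $|K|$. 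Because $|\dist_1|\leq\sigma$ on $K$ and $|\dist_2(y)|=\Dist(y,\M_2)$, the image $\phi(A)$ lies in a bounded neighbourhood of $\M_2\subset\Omega$, giving $\sup_{x\in A}|\phi(x)|\leq R$ for some $R$ that depends only on the data. Combining this with the Morrey embedding $W^{1,p}(\Omega)\hookrightarrow C^{0,\alpha}(\overline{\Omega})$ for $\alpha=1-n/p$, we obtain
$$\|\phi\|_{L^\infty(\Omega)}\leq R + C\|\D\phi\|_{L^p(\Omega)}(\diam\Omega)^\alpha,$$
and therefore a uniform $W^{1,p}$-bound on any energy-bounded deformation.

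With coercivity in hand, a minimizing sequence yields a weakly converging subsequence by Banach--Alaoglu and Rellich--Kondrakov. The a priori estimate of Step 4 requires minor rearrangement: the invocation of \eqref{eq:coercivity} used to derive \eqref{eq:geoestimate3} is replaced by the uniform $W^{1,p}$-bound just derived, which depends only on $E_\nu[\hat\varphi]$ (and $\nu\leq 1$, say), so the chain \eqref{eq:geoestimate1}--\eqref{eq:ehrlingestimate} yields the analogue of \eqref{eq:distance} after choosing $\hat\sigma$, $\epsilon$ and finally $\nu_N$ small enough. Putting the coercivity bound together with the lower semicontinuity on the $\rho$-admissible set concludes existence.

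The main potential obstacle is ensuring that the coercivity bound depends on $\phi$ only through $E_\nu[\hat\varphi]$, in particular that the constants $c_0, |K|, R$ can be fixed from the geometry of $\M_1, \M_2$ and the fixed cutoff $\eta_\sigma$ before any $\phi$ is chosen. This is the case because $\eta_\sigma\circ\dist_1$, $\supp(\eta_\sigma\circ\dist_1)$, $\dist_1$ and $\M_2$ are all independent of $\phi$; hence the threshold $\nu_N$ in the corollary can be selected analogously to $\nu_0$ in Theorem \ref{thm:existence}.
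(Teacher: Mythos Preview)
Your proposal is correct and follows the paper's own strategy: rerun the proof of Theorem \ref{thm:existence} and replace only the coercivity estimate. The paper's coercivity argument differs slightly from yours in the details. Rather than Chebyshev plus the Morrey seminorm bound, the paper invokes a Poincar\'e--Wirtinger inequality of the form
\[
\|\phi\|_{W^{1,p}(\Omega)}\leq C\Big(\|\D\phi\|_{L^p(\Omega)}+\Big|\int_\omega\phi\,\dd x\Big|\Big),\qquad \omega=\{|\dist_1|\leq\sigma/2\},
\]
and controls the mean via $|\phi(x)|\leq |\dist_2(\phi(x))|+\sup_{y\in\M_2}|y|$ together with H\"older's inequality on $E_{\text{match}}$, arriving at the explicit estimate \eqref{eq:modifiedEstimate} in which the $\nu$-dependence enters multiplicatively as $\nu^{1/2}E_\nu[\phi]^{1/2}$. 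This makes the tracking of constants in Step 4 marginally cleaner. Your Chebyshev--Morrey route is equally valid and arguably more elementary, since it avoids the generalized Poincar\'e inequality; both arguments exploit the same mechanism, namely that $E_{\text{match}}$ pins $\phi$ near $\M_2$ on a set of positive measure, after which the gradient bound propagates control to all of $\Omega$.
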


\begin{proof}
The proof follows the same arguments used for Theorem \ref{thm:existence}, so we only point out the necessary modifications. 
We need a replacement for the coercivity estimate \eqref{eq:coercivity} and claim 
\begin{equation}\label{eq:modifiedEstimate}\|\phi\|_{W^{1,p}(\Omega)} \leq  
C(1 + \nu^{\frac{1}{2}} E_{\text{match}}[\phi]^{\frac{1}{2}} + \|\D\phi\|_{L^p(\Omega)}) 
\leq C (1 + \nu^{\frac{1}{2}} E_\nu[\phi]^{\frac{1}{2}}+ E_\nu[\phi]^{\frac{1}{p}} ).\end{equation}
To verify this let us consider $\omega:=\{|\dist_1| \leq \sigma/2 \}$.
An adequate Poincar\'{e} inequality (see e.g. \cite[Theorem 12.23]{Leo09}) implies that
\[\|\phi\|_{W^{1,p}(\Omega)} \leq C \left( \|\D \phi\|_{L^p(\Omega)}+\left| \int_\omega \phi \,\dd x \right| \,\right),\]
and we estimate the second term in the right hand side by
\begin{equation*}
\begin{split}
\left| \int_\omega \phi \,\dd x \right| &\leq \int_\omega | \phi | \,\dd x \leq \int_\omega | \dist_2 \circ \phi | \,\dd x + |\omega|\sup_{x \in \M_2}|x|  \\
&\leq \int_\omega | \dist_2 \circ \phi - \dist_1 | \,\dd x + \int_\omega |\dist_1| \,\dd x + |\omega|\sup_{x \in \M_2}|x|  \\
&\leq \eta_\sigma\left(\frac{\sigma}{2}\right)^{-1} |\omega|^{-\frac{1}{2}} \left( \nu E_{\text{match}}[\phi]\right)^{\frac{1}{2}} + \int_\omega |\dist_1| \,\dd x + |\omega|\sup_{x \in \M_2}|x|,
\end{split}
\end{equation*}
where H\"{o}lder's inequality has been used to compare $L^1$ and $L^2$ norms. Therefore, \eqref{eq:modifiedEstimate} follows.

The proof of the estimate for $\| \dist_2 \circ \phi \|_{L^\infty(\{|\dist_1| \leq \sigma\})}$ (to ensure that deformations stay away from the singularities of $\dist_2$) is still valid with minor modifications, since $\nu$ appears in \eqref{eq:modifiedEstimate} multiplicatively.
\end{proof}

We conclude this section with the following proposition, which explores the penalization limit in which the parameter $\nu$ tends to zero.

\begin{proposition}\label{prop:penalization-limit}
Let $\{\nu_k\}_{k \in \mathbb{N}}$, be a sequence of penalty matching parameters such that $\nu_k \to 0$  as $k \to \infty$, and $\phi^k$ be solutions of the Dirichlet minimization problem for $E_{\nu_k}$. Then, up to a choice of subsequence, the $\phi^k$ converge strongly 
in $W^{1,p}$ to a minimizer of
\[E_{\text{mem}}+E_{\text{bend}}+E_{\text{vol}}\] 
in $W_0^{1,p}(\Omega;\R^n)+\Id$ under the constraint $\phi(\M_1^{c}) = \M_2^{c}$ for all $c \in (-\sigma, \sigma)$.
\end{proposition}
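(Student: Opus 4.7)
The plan is a standard penalty-method argument combining the tools already developed for Theorem \ref{thm:existence}. Setting $F := E_{\text{mem}} + E_{\text{bend}} + E_{\text{vol}}$, the first step is to use the extension $\hat{\varphi}$ constructed in Step 3 of the proof of Theorem \ref{thm:existence} as a competitor. Since $\hat{\varphi}$ satisfies $\hat{\varphi}(\M_1^c) = \M_2^c$ for $|c|\leq \sigma$, one has $E_{\text{match}}[\hat{\varphi}]=0$ and hence $E_{\nu_k}[\hat{\varphi}] = F[\hat{\varphi}]$ independently of $k$. By minimality of $\phi^k$, we have $E_{\nu_k}[\phi^k] \leq F[\hat{\varphi}]$, which combined with the definition \eqref{def:Ematch} yields both the uniform bound $F[\phi^k] \leq F[\hat{\varphi}]$ and
\[\int_\Omega (\eta_\sigma \circ \dist_1) \, |\dist_2 \circ \phi^k - \dist_1|^2 \, \dd x \leq \nu_k F[\hat{\varphi}] \to 0.\]
The coercivity estimate \eqref{eq:coercivity} and compactness then give, after extracting a subsequence, $\phi^k \rightharpoonup \phi^*$ in $W^{1,p}$ and uniformly on $\overline{\Omega}$. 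The uniform convergence, combined with continuity of $\dist_2$ near $\phi^*\bigl(\supp(\eta_\sigma \circ \dist_1)\bigr)$ (which stays uniformly away from $\Sing \dist_2$ thanks to the $\rho$-admissibility inherited along the lines of Step 4 of Theorem \ref{thm:existence}), forces $\dist_2 \circ \phi^* = \dist_1$ on $\{|\dist_1| \leq \sigma\}$, so $\phi^*(\M_1^c) \subseteq \M_2^c$ for every $c \in (-\sigma, \sigma)$. Equality follows because $E_{\text{vol}}[\phi^*] < \infty$ and Ball's theorem \cite{Bal81} make $\phi^*$ a homeomorphism of $\Omega$; then $\phi^*|_{\M_1^c}$ is a continuous injection between diffeomorphic compact connected $(n-1)$-manifolds, whose image is both closed by compactness and open by invariance of domain, hence all of $\M_2^c$.

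For the minimality, pick any competitor $\psi \in W_0^{1,p}(\Omega; \R^n) + \id$ satisfying $\psi(\M_1^c) = \M_2^c$ on the band, so that $E_{\text{match}}[\psi] = 0$ and $E_{\nu_k}[\psi] = F[\psi]$. Dropping the nonnegative matching term and using minimality of $\phi^k$ gives the chain
\[F[\phi^k] \leq E_{\nu_k}[\phi^k] \leq E_{\nu_k}[\psi] = F[\psi].\]
The lower semicontinuity arguments from Step 2 of Theorem \ref{thm:existence} apply verbatim to each of $E_{\text{mem}}$, $E_{\text{bend}}$, $E_{\text{vol}}$ on the weakly convergent subsequence, so passing to $\liminf_k$ yields $F[\phi^*] \leq \liminf_k F[\phi^k] \leq F[\psi]$, establishing that $\phi^*$ is a constrained minimizer.

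For the strong convergence, testing the previous inequality against $\psi = \phi^*$ itself (admissible by the first paragraph) gives $\limsup_k F[\phi^k] \leq F[\phi^*]$, so together with the lower bound $F[\phi^k] \to F[\phi^*]$. Since $E_{\text{mem}}, E_{\text{bend}}, E_{\text{vol}} \geq 0$ are individually lower semicontinuous, a standard subsequence-and-sum argument forces each of the three terms to converge separately; in particular the coercive contribution $\alpha_p \|\D\phi^k\|_{L^p}^p$ in $E_{\text{vol}}$ converges to $\alpha_p \|\D\phi^*\|_{L^p}^p$. Combined with the weak convergence $\D\phi^k \rightharpoonup \D\phi^*$ in $L^p$, uniform convexity of $L^p$ (as $p>n>1$) upgrades this to strong convergence $\D\phi^k \to \D\phi^*$ in $L^p$, and hence $\phi^k \to \phi^*$ in $W^{1,p}$. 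The main obstacle I anticipate is the clean upgrade from the inclusion $\phi^*(\M_1^c) \subseteq \M_2^c$ to full equality, which requires invoking Ball's injectivity result on the weak limit and a careful invariance-of-domain step to transfer openness of the image between level sets.
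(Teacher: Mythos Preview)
Your argument is correct and follows the same penalty-method skeleton as the paper (uniform bound via the diffeomorphic extension $\hat\varphi$, compactness, identification of the constraint in the limit, minimality, upgrade to strong convergence via Radon--Riesz), but two steps are handled differently and it is worth recording the comparison.

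For the equality $\phi^*(\M_1^c)=\M_2^c$, the paper argues that $\phi|_{\M_1^c}$ is a uniform limit of maps $\phi^k|_{\M_1^c}$ ``which are surjective onto $\M_2^c$''. Taken literally this is not justified, since the penalized minimizers $\phi^k$ only satisfy the level-set constraint approximately. Your route---apply Ball's theorem to the weak limit to get a global homeomorphism of $\Omega$, then use invariance of domain on the compact connected $(n-1)$-manifolds $\M_1^c$, $\M_2^c$---is cleaner and does not rely on any exact matching for finite $k$. For minimality, the paper packages the conclusion as $\Gamma$-convergence of the monotone family $E_{\nu_k}$, whereas you argue directly by comparison with an arbitrary constrained competitor; your version is more elementary and equivalent in content. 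For the strong convergence, the paper isolates $\alpha_p\|\D\phi^k\|_{L^p}^p$ by writing the remainder as a single smooth convex function $H$ of the weakly continuous quantities and using $\D H$ as a subgradient; your ``sum of nonnegative lsc terms'' argument achieves the same, but note that to extract $\alpha_p\|\D\phi^k\|_{L^p}^p \to \alpha_p\|\D\phi^*\|_{L^p}^p$ from $E_{\text{vol}}[\phi^k]\to E_{\text{vol}}[\phi^*]$ you are tacitly using that the three summands $\int|\D\phi|^p$, $\int|\cof\D\phi|^q$, $\int(\det\D\phi)^{-s}$ are each separately weakly lower semicontinuous, which is true by the weak continuity of $\cof$ and $\det$ together with convexity, and should be stated.
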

\begin{proof}
First, notice that the energy $E$ may be written as
\begin{eqnarray}\label{eq:ERewritten}
E_\nu[\phi]=&&\frac{1}{\nu}\int_\Omega \eta_\sigma \circ \dist_1 |\dist_2 \circ \phi - \dist_1|^2 + \alpha_p |\D\phi|^p\\
&&+H\Big(\det \D\phi, \Cof \D\phi, \Di{tg}\phi, \det ( \Di{tg}\phi + \n_2 \circ \phi  \otimes \n_1) , \nonumber\\
&&\Lambda(\mathcal{C}(\mathcal{S}^{ext}_1),\mathcal{C}(\mathcal{S}^{ext}_2 \circ \phi)) ,\D \phi),\det\big( \Lambda(\mathcal{C}(\mathcal{S}^{ext}_1),\mathcal{C}(\mathcal{S}^{ext}_2 \circ \phi)), \D \phi \big) \Big)\,\dd x,\nonumber
\end{eqnarray}
where $H:\R^+ \times \R^{n \times n} \times \R^{n \times n} \times \R \times \R^{n \times n} \times \R \to \R^+$ is smooth and convex.

Denote by $\hat{\varphi}$ the extension of a diffeomorphism between $\M_1$ and $\M_2$ used in the proof of Theorem \ref{thm:existence}. Since $E_{\text{match}}[\hat{\varphi}]=0$, we have that $E_{\nu_k}[\phi^k] \leq E_1[\hat{\varphi}]$. By the coercivity estimate \eqref{eq:coercivity} the $\phi^k$ are then bounded in $W^{1,p}$ and we may extract a (not relabelled) subsequence converging uniformly and weakly in $W^{1,p}$ to some limit $\phi$. Since $\{E_{\nu_k}[\phi^k]\}$ is bounded and $\nu_k \to 0$, the uniform convergence of $\phi^k$ implies that 
\begin{equation}\label{eq:conv-ematch}\int_\Omega \eta_\sigma(\dist_1)|\dist_2 \circ \phi^k - \dist_1|^2 \,\dd x \xrightarrow[k \to \infty]{} \int_\Omega \eta_\sigma(\dist_1)|\dist_2 \circ \phi - \dist_1|^2 \,\dd x = 0.\end{equation}
In consequence, $\phi(\M_1^c)\subseteq \M_2^c$. Since $\restr{\phi}{\M_1^c}$ is the uniform limit of the maps $\restr{\phi^k}{\M_1^c}$ which are surjective onto $\M_2^c$ and $\M_1^c$ is compact, we conclude that $\phi(\M_1^c)=\M_2^c$ for all $c \in (-\sigma, \sigma)$. Therefore, $\phi$ is admissible for all $\nu_k$ and $E_{\nu_k}[\phi^k] \leq E_{1}[\phi]$. Combined with lower semicontinuity and \eqref{eq:ERewritten}, the above implies
\begin{equation}\label{eq:conv-derpart}
\int_\Omega \alpha_p |\D\phi^k|^p + H(\det(\D\phi^k), \ldots ) \,\dd x \xrightarrow[k \to \infty]{} 
\int_\Omega \alpha_p |\D\phi|^p + H(\det(\D\phi), \ldots ) \,\dd x.
\end{equation}
From this identity, the fact that $H$ is convex and differentiable, and $\D\phi^k \rightharpoonup \D \phi$ in $L^p$ it follows that 
\begin{equation*}
 \begin{aligned}
  0&= \limsup_{k \to \infty} \left( \int_\Omega \alpha_p \left( |\D\phi^k|^p - |\D\phi|^p \right) + 
                     H(\det(\D\phi^k), \ldots ) - H(\det(\D\phi), \ldots ) \,\dd x \right) \\
   &\geq 
      \limsup_{k \to \infty} \left( \int_\Omega \alpha_p \left( |\D\phi^k|^p - |\D\phi|^p \right) + 
                     \D H(\det(\D\phi), \ldots ) \cdot (\det(\D\phi^k) - \det(\D\phi), \ldots ) \,\dd x\right)\\
   &= \limsup_{k \to \infty} \int_\Omega \alpha_p |\D\phi^k|^p \dd x - \int_\Omega \alpha_p |\D\phi|^p \,\dd x\;.
 \end{aligned}
\end{equation*}
Together with the weak lower semicontinuity of the $L^p$-norm, the above shows that 
\begin{equation*}
   \int_\Omega \alpha_p |\D\phi|^p \,\dd x = \lim_{k \to \infty} \int_\Omega \alpha_p |\D\phi^k|^p \dd x\;.
\end{equation*}
Because $L^p(\Omega)$ has the Radon-Riesz property (\cite[2.5.26]{Meg98}), weak convergence and convergence of the norm guarantee strong convergence.
Since $\phi_k$ was assumed to converge uniformly, we have also $\phi_k \to \phi$ in $L^p$, and this shows that 
$\phi_k \to \phi$ in $W^{1,p}(\Omega; \R^{n})$. 

That $\phi$ is a minimizer of the constrained problem follows directly (\cite{Bra02}, Theorem 1.21) from the fact that the $E_{\nu_k}$ are an equicoercive family of functionals, $\Gamma$-converging in the weak topology of $W^{1,p}$. Indeed, equicoercivity follows easily from the above, while $\Gamma$-convergence is implied by the fact that $E_{\nu_k}$ is an increasing sequence (\cite{Bra02}, Remark 1.40), because $\nu_k \to 0$ appears as a denominator in $E_{\text{match}}$. 
\end{proof}

\begin{remark}By the coercivity estimate \eqref{eq:modifiedEstimate} of Corollary \ref{cor:neumann}, an entirely analogous result holds for minimizers with Neumann boundary conditions.
\end{remark}

\begin{remark}
Contrary to what might be expected, the limit problem we have obtained is not a surface problem, since all the level sets are still coupled through the volume energy $E_{\text{vol}}$. The line of reasoning above depends heavily on the fact that the coefficients of the volume term are held fixed, since the equicoercivity and uniform strict quasiconvexity (in the language of \cite{EvaGar87}) both require the presence of $\|\D\phi\|^p_{L^p(\Omega)}$ in the functional.
\end{remark}
%%%%%%%%%%%%%%%%%%%%%%%%%%%%%%%%%%%%%%%%%%%%%
\subsection{Oscillations and lack of rank-one convexity for the naive approach}
%%%%%%%%%%%%%%%%%%%%%%%%%%%%%%%%%%%%%%%%%%%%%
To model the tangential distortion energy we have considered a frame indifferent energy density with the argument $\Di{tg}\phi +(\n_2 \circ  \phi ) \otimes \n_1$.
Let us now consider \added{the case $n=2$ and} a simpler version of the membrane energy \eqref{def:Ematch}, where we use as an argument of the energy density directly the tangential Cauchy-Green strain tensor (cf \eqref{eq:Ctt})
$(\DiOs \phi(x))^T (\DiOs \phi(x)) + \n_1(x) \otimes \n_1(x)$,
and define the membrane energy
\begin{equation}\label{eq:badMem}
\tilde E_\text{mem}[\phi]:=\int_{\Omega} \eta_\sigma (\dist_1(x))  W\!\! \left( \big(\DiOs \phi(x)\big)^T \DiOs \phi(x) + \n_1(x) \otimes \n_1(x) \right) \dd x,
\end{equation}
with $\DiOs\phi := \D\phi \P_1$ defined as the tangential part of the derivative along $T_x \M_1^{\dist_1(x)}$, 
and $W:\R^{2 \times 2}\to\R$ a frame indifferent energy density that has a strict minimum at $\SO(2)$. 
In fact, this energy is no longer lower semicontinuous and we will present counterexamples.
\begin{example}[Oscillation patterns]
We construct an explicit sequence for which lower semicontinuity of the membrane energy $\tilde E_\text{mem}$ fails. Fix $0<R<1$ and $\M_1 = \S^1$ with the parametrization $\xi \to e^{i \xi}$. Consider a sequence of deformations $\varphi_k: \S^1 \to \R^2$ defined in polar coordinates of $(r, \theta)$ by the condition
\begin{equation}\label{eq:varphik}
\partial_\xi \varphi_k(\xi) = \left(R \sin k\xi \right) e_r\big( r(\varphi_k( \xi ) ), \theta(\varphi_k( \xi ) ) \big) + \left(1-R^2 \sin^2 k \xi \right)^{\frac{1}{2}} e_\theta \big(r(\varphi_k( \xi )), \theta(\varphi_k( \xi ) ) \big),
\end{equation}
 where $e_r=(\cos \theta, \sin \theta)^T, e_\theta=(-\sin \theta, \cos \theta)^T$ for given $\phi_k(0)$. 
 Note that for any $k$ and $\theta$ that $|\partial_\theta \varphi_k(\theta)|=1$, so that the transformations are tangentially isometric. 
We define $\varphi_k(0)$ via two integration constants $r_0$ and $\theta_0$ for the initialization of $r$ and $\theta$ at $\xi=0$. 
We set $\theta_0=0$ and choose $r_0$  such that the curve $\varphi_k$ is closed and simple, which imposes $r_0=r(\varphi_k(0))=r(\varphi_k(2\pi))$ 
since the first term in \eqref{eq:varphik} has zero average. From the second term, taking into account that $e_\theta(r, \theta)$ is independent of $r$, we get the condition
\[2 \pi r_0 = \int_0^{2\pi}\left(1-R^2 \sin^2 k \xi \right)^{\frac{1}{2}} \dd \xi=\frac{1}{k} \int_0^{2\pi k}\left(1-R^2 \sin^2 \zeta \right)^{\frac{1}{2}} \dd \zeta,\]
where we have applied the change of variables $\zeta = k \xi$. By periodicity the right hand side (an incomplete elliptic integral of the second kind with modulus $R$) is independent of $k$ and thus determines $r_0$. The resulting $\varphi_k$ for several values of $k$ are depicted in Figure \ref{fig:oscillation}. 

We observe that
$\partial_\theta \varphi_k(\theta) \rightharpoonup r_0 e_\theta \text{ in }L^p,$
for any $1 \leq p < \infty$ (and also weak-* in $L^\infty$). Therefore, the weak $W^{1,p}$-limit $\varphi$ of the $\varphi_k$ is the function defined by $\varphi(\theta)=r_0 e_r$ and obviously not an isometry. 
Assuming $0 < \sigma < 1$ and extending $\varphi_k, \varphi$ along the radial direction $e_r$ to the annulus $\{1-\sigma \leq r \leq 1+\sigma\}$, we obtain corresponding deformations given by 
\[\phi^k(r,\theta)=\varphi_k(\theta)+(r-1)Q_{\frac\pi2}\partial_\theta \varphi_k(\theta) \text{, and }\phi(r,\theta)=\varphi(\theta)+(r-1)r_0 e_r=r \, r_0 e_r,\]
where $Q_{\frac\pi2}$ stands for clockwise rotation by $\pi/2$, so that $Q_{\frac\pi2}\partial_\theta \varphi_k(\theta)$ is the unit outward normal to $\varphi_k(\S^1)$. Clearly also $\phi^k \rightharpoonup \phi$ in $W^{1,p}$ on the annulus. We observe that $\tilde E_{\text{mem}}[\phi^k]=0$, but $\tilde E_{\text{mem}}[\phi]>0$. Hence,  $\tilde E_\text{mem}$ is not weakly lower semicontinuous.
%fffffffffffffffffffffffffffff
\begin{figure}[ht]
    \begin{center}
        \includegraphics[width=0.85\textwidth]{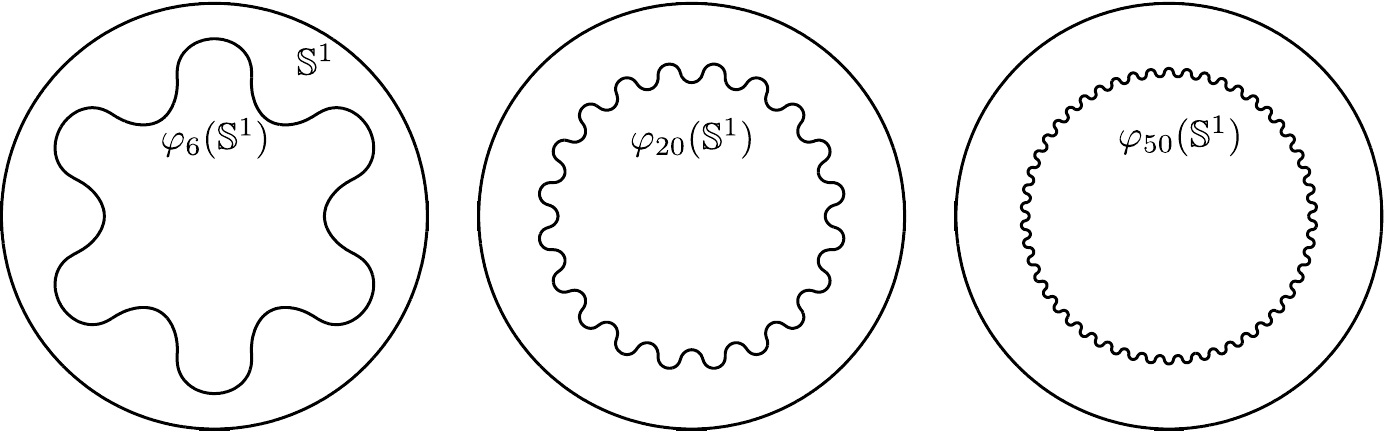}
    \caption{Explicit oscillations for a simplified model. $\varphi_k$ for $R=0.95$, $k=6,20,50$}
    \label{fig:oscillation}
    \end{center}
\end{figure}
\end{example}
%fffffffffffffffffffffffffffff
%An intuitive explanation for this failure of lower semicontinuity is that we are trying to control through determinants the $n$ components of $T\phi(\M_1^c)$ (which is $n-1$ dimensional, but we don't know a priori in 
%which directions) with just the $n-1$ directions in $T\M_1^c$. Therefore, we cannot get enough cancellation (in the sense of compensated compactness \cite{Tar79}) to obtain weak continuity of the area distortion, nor 
%weak lower semicontinuity of the functionals, and thus oscillations are encouraged when trying to impose an isometry constraint. In fact, it is well known that the partial differential relations arising from low-regularity %isometric immersion of a $n-1$ dimensional surface in $\R^n$ have a large amount of solutions. 
The celebrated Nash-Kuiper theorem \cite{Nas54, Kui55} states that it is possible to uniformly approximate any short $C^\infty$ immersion by $C^1$ isometric ones. Our explicit oscillations around $r_0 \S^1$ is just one example of this phenomenon.  Notice that a bending term of the type $E_{\text{bend}}$ introduced in our model only compares the curvatures of $\M_1^{\dist_1(x)}$ and $\M_2^{\dist_2(\phi(x))}$. It therefore does not penalize oscillations, since it does not detect the curvature of $\phi(\M_1)$ at all.
%It is however plausible that the volume term \eqref{def:Evol} penalizes such oscillations strongly enough to recover lower semicontinuity (notice that the $\phi^k$ in our example are not even injective), but we do not pursue this direction here. This interaction is likely related to singular limits of shell theory in which curvature-dependent terms appear \cite{FriJamMul06}. Let us mention that the limit theories for thin plates and shells with energy scaling linearly on their width \cite{LedRao95, LedRao96} also do not properly account for compression resistance, since in the (strong $L^p$) $\Gamma$-limit a quasiconvex envelope appears, forcing the resulting oscillations to be `averaged away'.

\begin{example}\label{ex:norank1conv}[Lack of rank-one convexity]
We present an additional example of a configuration for which the integrand of an energy of the type $\tilde E_{\text{mem}}$ is not rank-one convex. 
Rank-one convexity of the complete energy density, \ie, convexity in $t \in \R$ when composed with the function $A+tB$ for any matrix $A$ and any rank one matrix $B$, is known to be a necessary condition for quasiconvexity 
(\cite{Dac08}, Theorem 5.3). Quasiconvexity, in turn, is necessary for weak lower semicontinuity of integral functionals in Sobolev spaces (\cite{Dac08}, Theorem 8.1 and Remark 8.2).

Let $\Omega=\added{(-2,2)^2}$, and \added{$\M_1$ be a closed $C^2$ curve such that $\M_1 \cap (-1,1)\times(0,2)=(-1,1)\times\left\{\added{1}\right\}$. At any point $x_0 \in (-1,1)\times\left\{1\right\}$}, the tangential derivatives are just partial derivatives along the first coordinate, yielding
\begin{gather*}\DiOs\phi\added{(x_0)} = \D\phi\added{(x_0)} \P(e_2)=\left(\begin{array}{cc} \partial_1 \phi_1\added{(x_0)} & 0 \\ \partial_1 \phi_2\added{(x_0)} & 0 \end{array}\right), \textrm{ and }\\
\DiOs\phi\added{(x_0)}^T \DiOs\phi\added{(x_0)} = \left(\begin{array}{cc} \left(\partial_1 \phi_1\added{(x_0)}\right)^2 + \left(\partial_1 \phi_2\added{(x_0)}\right)^2 & 0 \\ 0 & 0 \end{array}\right).\end{gather*}
Hence the tangential area distortion measure reduces to
\begin{equation}\begin{aligned}\label{eq:ad1d}\tr(\DiOs\phi\added{(x_0)}^T \DiOs\phi\added{(x_0)})&=\det(\DiOs\phi\added{(x_0)}^T \DiOs\phi\added{(x_0)}+e_2 \otimes e_2)\\
&=\left(\partial_1 \phi_1\added{(x_0)}\right)^2 + \left(\partial_1 \phi_2\added{(x_0)}\right)^2,\end{aligned}\end{equation}
where $e_2=(0,1)^T$. Defining now the convex function 
\[F(a,d)=\frac{1}{2} a + \frac{1}{2} d + d^{-1}-2,\]
which has a unique minimum with value $0$ for $a=d=1$, we have that the energy density
\[W_F(B)=F\left( \tr(B^TB), \det(B^TB+e_2 \otimes e_2) \right)\]
has a pointwise minimum, with value zero, whenever $\D\phi$ is such that $\left(\partial_1 \phi_1\right)^2 + \left(\partial_1 \phi_2\right)^2=1$. %Note that although the length and area distortion measures are the same in 1D, the corresponding energy fits in \eqref{eq:badMem} in any dimension.

Consider now, for $0\leq \lambda \leq 1$, the family of matrices
\begin{equation}\label{eq:rank1conn}B(\lambda)=\left(\begin{array}{cc} \lambda & 0 \\ (1-\lambda) & 0 \end{array}\right)=\lambda \left(\begin{array}{cc} 1 & 0 \\ 0 & 0 \end{array}\right)+(1-\lambda) \left(\begin{array}{cc} 0 & 0 \\ 1 & 0 \end{array}\right).\end{equation}
Clearly $B(\lambda)$ is rank one. But we have
$W_F(B(\lambda))=\lambda^2+(1-\lambda)^2+\frac{1}{\lambda^2+(1-\lambda)^2}-2$ and therefore
\[W_F(B(0))=F(B(1))=0, \textrm{ but }W_F( B(1/2))=\frac{1}{2},\]
which demonstrates that $W_F$ is not rank-one convex. 
%Observe that the situation is not improved by adding a simple regularization term, like for example considering a density of the type $W_\varepsilon(\D\phi):=W(\D\phi)+\varepsilon |\D\phi|^2$, since in this case we have
%\[W_F^\varepsilon(B(0))=W_F^\varepsilon(B(1))=\varepsilon, \textrm{ and }W_F^\varepsilon(B(1/2))=\frac{1}{2}+\frac{\varepsilon}{4},\]
%and therefore if $\varepsilon < \frac{2}{3}$, the function $W_F^\varepsilon$ is also not convex on this particular rank-one segment.
%
%Indeed, the expression $\det(\DiOs\phi^T \DiOs\phi+ \n_1 \otimes \n_1)$ appearing above can not be reduced to a squared determinant of a minor of $\D\phi$, in contrast to the corresponding %construction \eqref{eq:Ctt} for our tangential derivative $\Di{tg}\phi$. In fact, a density which is polyconvex in $\DiOs\phi$ would give rise to a lower semicontinuous energy, but no such %energy can be useful for our purpose as a measure of tangential area distortion. 
%An application of the Binet-Cauchy formula (\cite[Section 3.2, Theorem 4]{EvaGar92}) yields
%$$\big( \det(\DiOs\phi^T \DiOs\phi+ \n_1 \otimes \n_1) \big)^2 = \sum_{j=1}^3 \left( \det\big(\cof( \DiOs\phi \, Q(\n_1) )_{j3} \big) \right)^2,$$
%where the last column of the matrix $\DiOs\phi \, Q(\n_1) \in \R^{3 \times 3}$ vanishes. Therefore, the condition for lack of area distortion $\det(\DiOs\phi^T \DiOs\phi+ \n_1 \otimes \n_1)=1$ %expresses that a triplet of determinants of minors of $\DiOs\phi$ belongs to $\S^2$, which is not a convex set.}
\end{example}
 
%%%%%%%%%%%%%%%%%%%%%%%%%%%%%%%%%%%%%%%%%%%%%
%%%%%%%%%%%%%%%%%%%%%%%%%%%%%%%%%%%%%%%%%%%%%
%%%%%%%%%%%%%%%%%%%%%%%%%%%%%%%%%%%%%%%%%%%%%

\section{Finite element discretization based on adaptive octrees}\label{sec:discr}
We adopt a `discretize, then optimize' approach and consider a finite element approximation and optimize for the coefficients of the solution. Since the energy $E_\nu$ is highly nonlinear and nonconvex, we use a cascadic multilevel minimization scheme in which the solution for one grid level is used as the initial data for the minimization on the next finer grid. We use adaptive refinement of the underlying meshes around the surfaces $\M_1, \M_2 \added{\subset (0,1)^n}$ \added{for $n=2,3$} (Algorithm \ref{alg:scheme}).

One of the main characteristics of our functional is the pervasive presence of coefficients depending on the deformed position $\phi(x)$. Indeed, this is how the functional takes into account the geometry of target surface, through the projection $\P_2$ and shape operator $\mathcal{S}_2$. From an implementation perspective, however, this means that frequently discrete functions have to evaluated at deformed positions. Therefore, the ability to efficiently search the index of an element containing a given position is of paramount importance, so a hierarchical data structure that allows for efficient searching is needed. The model only contains first derivatives of the unknown deformation. Hence, multilinear finite elements already allow a conforming discretization. 
For these reasons we use multilinear FEM on octree grids. The grids used are such that all of the elements are either squares or cubes of side length $h=2^{-\ell}$, for an integer $\ell$ to which we refer as grid level of the element. In what follows let us detail the different ingredients of the algorithm.

\begin{algorithm}[H]
\begin{minipage}{0.5\columnwidth}
  \begin{algorithmic}
		\State Starting grid: Uniform of level $\ell_{\text{min}}$, $h=2^{-\ell_{\text{min}}}$
		\State $\phi$ $\gets$ $\Id$
    \For{$\ell \gets \ell_{\min} \textrm{ to } \ell_{\max}$}
		\State Regenerate $\dist_1, \dist_2$ on grid by aFMM.
		\State Compute $\n_1$, $\n_2$, $\mathcal{S}_1$, $\mathcal{S}_2$ from $\dist_1, \dist_2$.
%    	\State $\beta$ $\gets$ $\beta_{\max}$
%      \While{$\beta > \beta_{\min}$}
%		\State $\phi \gets$ $H^1$-CG-descent ($\beta$, $\phi$)        
%        \State $\beta$ $\gets$ $\beta/2$
%      \EndWhile
		\State $\phi \gets$ $L^2$-CG-descent ($\phi$)
			\State Mark all elements intersecting $\M_1$ or $\M_2$.
			\State Refine the grid ($h$ $\gets$ $2^{-\ell+1}$).
    \EndFor
    \State \Return{$\phi$}
  \end{algorithmic}
  \caption{\label{alg:scheme}Cascadic minimization scheme.}
  \end{minipage}
\end{algorithm}

%%%%%%%%%%%%%%%%%%%%%%%%%%%%%%%%%%%%%%%%%%%%%
\paragraph*{\textbf{Multilinear Finite Elements on Octrees.}}
We assume $n=3$ for the presentation here. 
Using an adaptive octree grid based on cubic cells leads to hanging nodes (see Figure \ref{fig:grids}), nodes which are on the facet of a cell without being one of its vertices. Enforcing continuity of the finite element functions leads to constraints for function values on hanging nodes and these hanging nodes are not degrees of freedom. Additionally, to minimize the complexity of the required interpolation rules, the subdivision is propagated in such a way that the grid level of neighboring elements sharing a cell facet differs at most by one.
%%%%%%%%%%%%%%%%%%%%%%%%%%%%%%%%%%%%%%%%%%%%%
\paragraph*{\textbf{Octrees and the access to degrees of freedom via hashtables.}}
Even though the tree structure gives a natural hierarchical structure to the elements of the mesh, maintaining consistent linear indices for degrees of freedom, hanging nodes, and elements can be delicate. Consistent rules could be devised to maintain consistency with the element octree for a given mesh, but these would not be easy to update when the grid is refined. 
In order to keep track of vertex indices in a simple manner without sacrificing efficiency, hash maps (\cite{ColeRiSt09}, Chapter 11) are maintained to keep track of the indices of degrees of freedom, hanging nodes, and cells. The keys used in the hashmap are a combination of a level value $\ell$ and point coordinates as integer multiples of $h=2^{-\ell}$. These keys uniquely identify nodes or elements, with the convention that an element is identified with its lower-left-back corner. Whenever a query for a node or cell is made, there are two possible outcomes. If it is already contained in the corresponding hash table, a linear index for it 
can be retrieved. Otherwise, a new entry of the hash table is created and the node or cell is given the next unused index. Since we do not require coarsening of the mesh, this scheme guarantees a consistent linear set of indices with a computational cost for insertions and queries that is, on average, independent of the mesh size.
%%%%%%%%%%%%%%%%%%%%%%%%%%%%%%%%%%%%%%%%%%%%%
\paragraph*{\textbf{Computing distance functions on octrees.}}
In our model we have assumed that the distance functions to our surfaces are given. In practice, especially when using adaptive grids, we need to compute signed distance functions on such grids. This has been accomplished by a straightforward adaptation of the Fast Marching Method on cartesian grids \cite{Set99} exploiting the fact that our grids still are subgrids of a regular cartesian grid. In the implemented variant hanging nodes are not taken into account for the propagation, their values being linearly interpolated to accommodate the constraints needed for conformality. 
The initialization for the distance computation has been performed starting from triangular meshes of the surfaces (for $n=3$; for $n=2$ two-bit segmentation of interior and exterior of the curves has been used). 
The signs of the distance functions have to be computed separately, by detecting which points of the grid are inside (resp. outside) the initial surface data. In our case, they have been computed with the provably correct algorithm given in \cite{BaAa05}.
\paragraph*{\textbf{Computation of the coefficients.}}
The discretization for the unknown deformation $\phi$, as already mentioned, is done by multilinear finite elements. However, the coefficients of our model include first and second derivatives of the signed distance functions $\dist_i$, for the normal vectors $\n_i$ and shape operators $\mathcal{S}_i$ ($i=1,2$), respectively. The approximations are required to be robust, since they appear in the highest order terms of the model. 
For the normal vectors $\n_i$, we compute the $L^2$ projection of the finite element derivative of $\dist_i$ to recover the nodal values of a piecewise multilinear function, followed by a orthogonal projection to the unit sphere to restore the constraint $| \n_i | = 1$.

In the case of the shape operators, our approach is to approximate the distance functions $\dist_i$ by a quadratic polynomial supported on a neighborhood of each point. Given a fixed integer neighborhood size $r$, for each non hanging node $x_k$ (\ie the neighborhood $B_r(x_k)$ contains the $r$ closest other degrees of freedom of the adaptive grid) the local quadratic polynomial  $p_k$ is defined as the one minimizing the least-squares error
\[\sum_{x_j \in B_r(x_k)} \left(p_k(x_j)-\dist_i(x_j) \right)^2\,.\]
which can be easily computed by inverting a small matrix. The Hessian of $\dist_i$ at the node $x_k$ is then approximated by the Hessian of $p_k$.

For the computation of matrix square roots and their inverses, we have used the method described in \cite{Fra89}, taking appropriate care to truncate almost-singular matrices, since the resulting square roots also appear inverted.

%ffffffffffffffffffffffffffffffffffffffffffffffffffffffff
\begin{figure*}
  \centering
  \mbox{} \hfill
  \includegraphics[width=.40\linewidth]{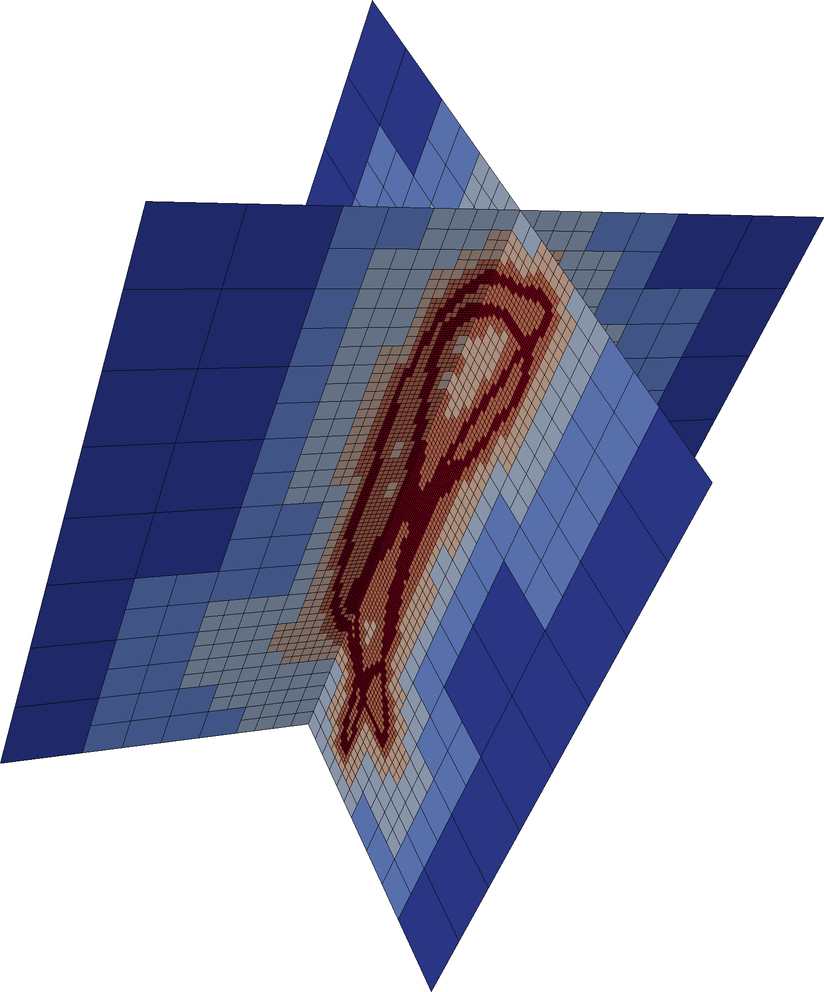}
  \hfill
  \includegraphics[width=.40\linewidth]{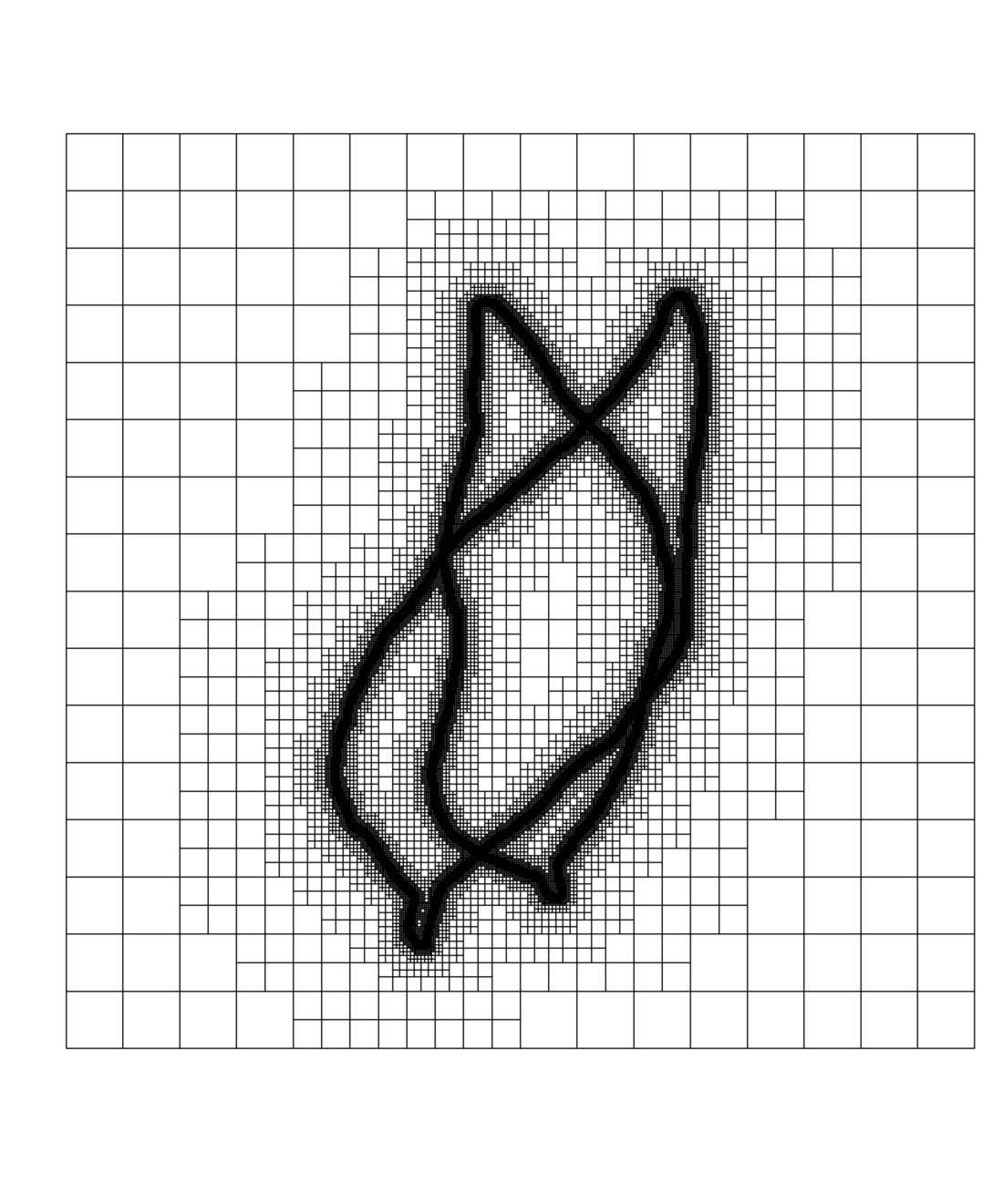}
  \hfill \mbox{}
  \caption{\label{fig:grids}Hierarchical grids corresponding to the dolphin surfaces (different 2D slices in 3D, grid level 8, 178584 DOFs, $1.1\%$ of the amount of DOFs in the full grid case) and leaf contours
  (2D, level 10).}
\end{figure*}
%ffffffffffffffffffffffffffffffffffffffffffffffffffffffff

%%%%%%%%%%%%%%%%%%%%%%%%%%%%%%%%%%%%%%%%%%%%%
\paragraph*{\textbf{Minimization strategy.}}
For the minimization at each level, we have opted for a Fletcher-Reeves nonlinear conjugate gradient method (\cite{NocWri06}, Section 5.2).
%, with gradients computed with respect to a $H^1_\beta$ metric, thereby introducing additional smoothing to mitigate the nonconvexity of the problem,\ie 
%\begin{equation*}\nabla_{H^1_\beta}E_\nu[\phi]=(1+\frac{\beta^2}{2} \Delta)^{-1}\nabla_{L^2}E_\nu[\phi],\end{equation*}
%where $\Delta$ is the Laplacian corresponding to the chosen boundary conditions, and $\nabla_{L^2}E_\nu$ is the usual $L^2$ gradient appearing in the Euler-Lagrange equation for $E_\nu$. 
The $L^2$ gradient of $E_\nu$, whose computation is involved but elementary, was implemented directly. The parameter $\alpha$ is progressively reduced when a further feasible descent step is not found, according to an Armijo line search (\cite{NocWri06}, Section 3.1).

%%%%%%%%%%%%%%%%%%%%%%%%%%%%%%%%%%%%%%%%%%%%%
%%%%%%%%%%%%%%%%%%%%%%%%%%%%%%%%%%%%%%%%%%%%%
%%%%%%%%%%%%%%%%%%%%%%%%%%%%%%%%%%%%%%%%%%%%%
\section{Numerical results}\label{sec:appl}
All of our results have been computed on the unit cube $\Omega=[0,1]^3$ for the matching of surfaces in 3D, and the unit square $[0,1]^2$ for the matching of contour curves in 2D. 
In practice, we have used homogeneous Neumann boundary conditions, since this allows to have relatively large shapes $\M_i$ in comparison to the size of the domain $\Omega$ without creating excessive volume energies near the boundary (for the justification we refer to Corollary \ref{cor:neumann}). However, if the boundary is not fixed, the deformed domain $\phi(\Omega)$ is not necessarily contained in $\Omega$, so evaluation of coefficients on deformed positions has to be appropriately handled numerically. We use a projection of outside position onto the boundary of $\Omega$ for sufficient large $\Dist(\M_2, \partial \Omega)$. 

For the membrane and the bending energy we use the material parameters $\lambda=\mu=1$, corresponding to the density
\[\W(A)=\frac{1}{2}| A |^2 + \frac{1}{4} (\det A)^2 + \frac{3}{2}e^{- ( \det A - 1 ) } - \frac{13}{4}.\]
In the bending term, the shape operators have been regularized through the truncated absolute value function with $\tau=1$. Since we work on the unit cube, this corresponds to a comparatively large curvature radius. For the volume term, given that enforcing orientation preservation in a finite element framework is a far from straightforward, it is advantageous to work with the simplified version
\[c_{\text{vol}}\int_\Omega W(\D \phi) \,\dd x.\]

We have run the minimization scheme of Algorithm \ref{alg:scheme} beginning from a uniform grid of level $\ell_{\text{min}}=2$ or $\ell_{\text{min}}=3$ with $9^3=729$ nodes, and refined up to $\ell_{\text{max}}=8$ for 3D examples. For 2D cases a reasonable range turned out to be $\ell_{\text{min}}=4, \ell_{\text{max}}=10$. The finest grids used for two of the examples below are depicted in Figure \ref{fig:grids}. The width  of the narrow band was chosen proportional the finest resolution of the mesh ($\sigma=2h$) since a small value of $\sigma$ clearly produces inaccurate results when $\eta_\sigma$ is evaluated on coarse grids. However, the constraint $\int_\Omega \eta_\sigma = 1$ ensures that the overall strength of the surface terms $E_{\text{match}}$, $E_{\text{mem}}$ and $E_{\text{bend}}$ is not affected. The value of the penalty constraint $\nu$ was divided by $10$ for each grid refinement, which is justified by Proposition \ref{prop:penalization-limit}. Furthermore, the 
%coefficients $\alpha_p, \beta_q, \gamma_s$ are 
volume weight $c_{\text{vol}}$ was also halved per level to allow for simultaneously higher initial regularization and close final matches. Note that this reduction is much slower than that of the matching parameter.

In all examples, we have used the identity as the initial deformation. It should be noted that although the energy is geometric by design, we are using a first-order descent method for its minimization. In consequence, an adequate rigid pre-alignment can be beneficial for intricate shapes. Figure \ref{fig:dolphin} shows results for the matching of two different dolphin shapes. Our variational approach is highly nonlinear and non-convex. Thus, the numerical approximation of the globally optimal deformation depends on the initialization of the deformation. Figure  \ref{fig:dolphinRot} shows that
the identity deformation as the initial deformation is advisable only if the expected optimal deformation is not too large. This is demonstrated by applying different rigid body motions to $\M_1$.

All figures have been produced by deforming the input data (polygonal curve or triangulated surface) via the resulting deformation $\phi$. This is in contrast to deforming the grid and plotting the resulting extracted level sets (which effectively visualizes the {\it inverse} deformation), as commonly done in the registration literature, and also in \cite{IgBeRuSc13}.

\paragraph*{\textbf{Test case.}}
First we present a simple test case to underline the qualitative properties of our model. Figure \ref{fig:cubes} shows a configuration in which a high amount of compression, combined with rotation, is required. Our model finds the intuitively correct deformation, but oscillations typical for the lack of lower semicontinuity of the underlying energy are induced when $\P_2$ is not used in the membrane and bending terms. The bending term assists in matching the curvatures even if the deformation is not rigid. Note, however, that for the optimal match the curvature energy $E_{\text{bend}}$ is not expected to vanish, as can easily be seen from \eqref{eq:bendingExplanation}, \eqref{eq:matchingObjective} and the related discussion in Section \ref{sec:levelsets}.
%\begin{itemize}
%\item oscillations and non oscillations in 2D
%\item oscillation for compression  in 3D with the old model
%\item compression and expansion in 3D for different (rotated) ellipsoids 
%\item smoothed squares in 2D
%\item smoothed octahedrons in 3D (rotated); curvature matching
%\item constant mean curvature surfaces 
%\end{itemize}
%ffffffffffffffffffffffffffffffffffffffffffffffffffffffff
\begin{figure*}
  \centering
  \mbox{} \hfill
  \includegraphics[width=.243\linewidth]{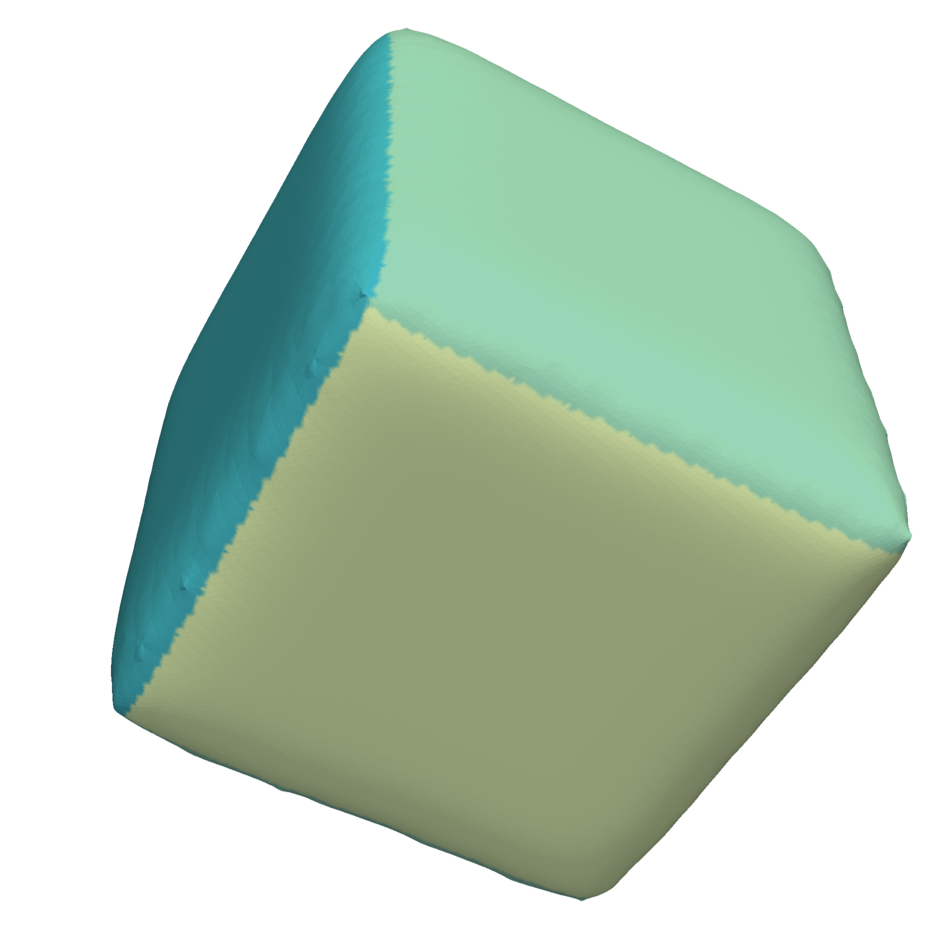}
  \hfill
  \includegraphics[width=.243\linewidth]{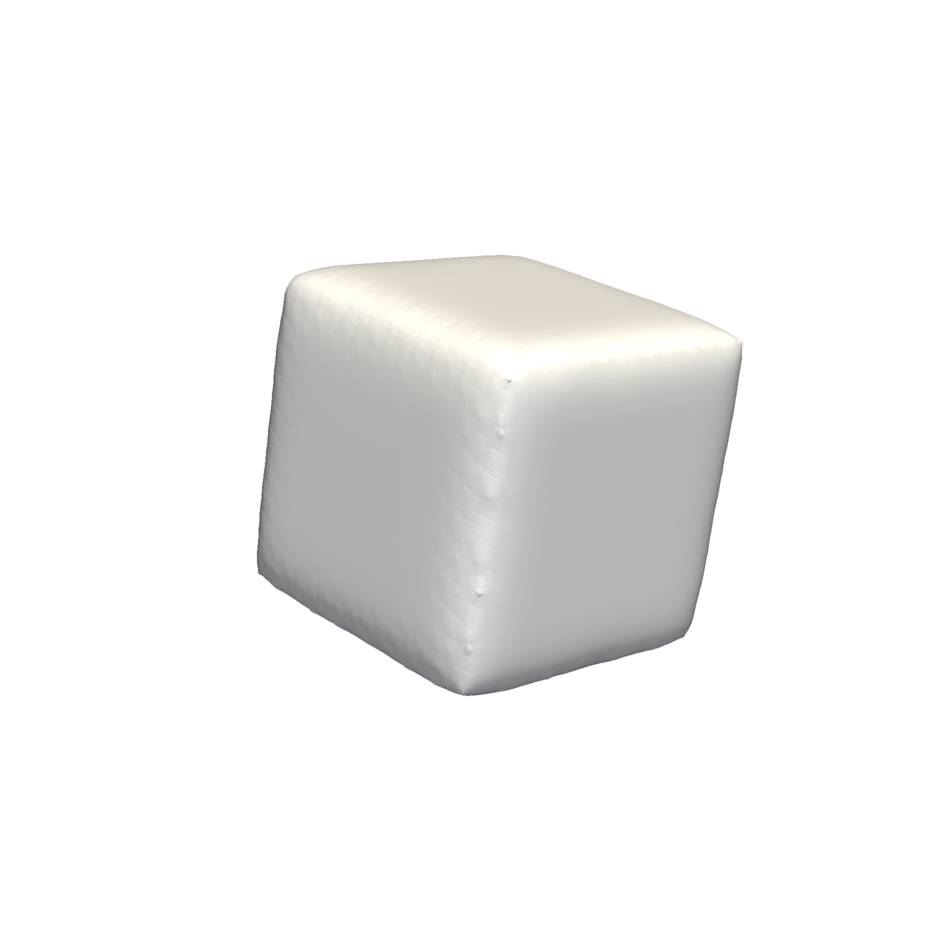}
  \hfill
  \includegraphics[width=.243\linewidth]{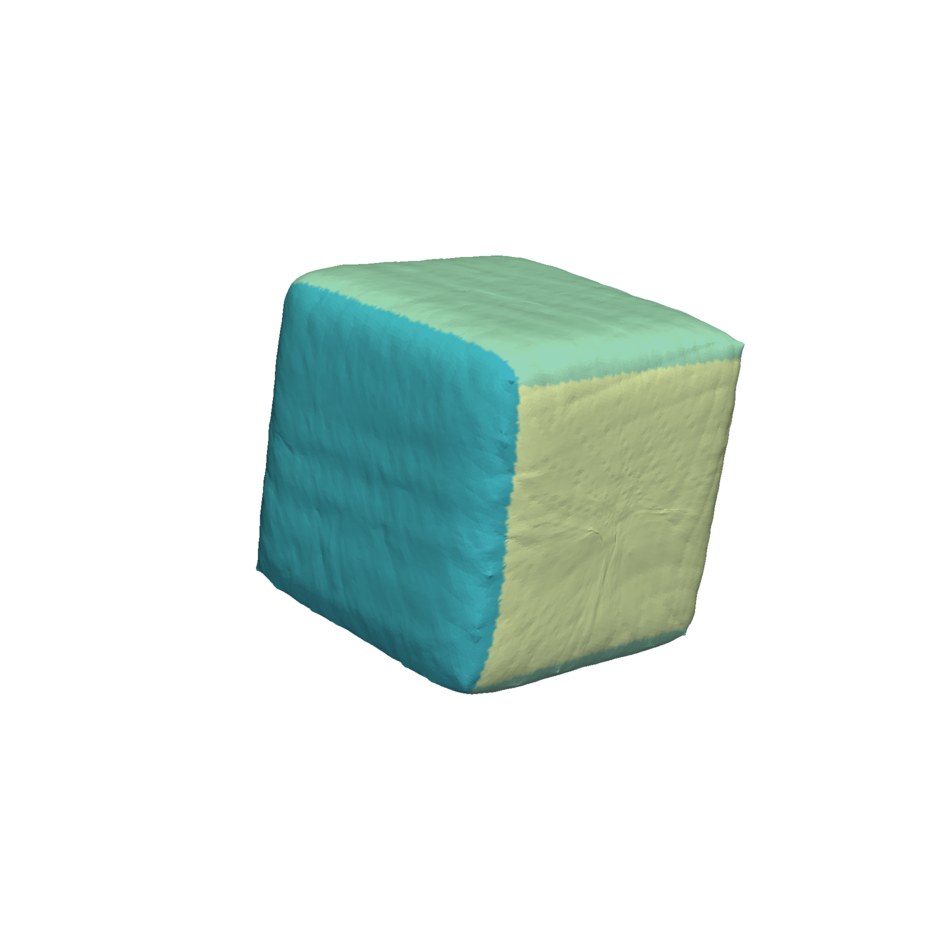}
  \hfill
  \includegraphics[width=.243\linewidth]{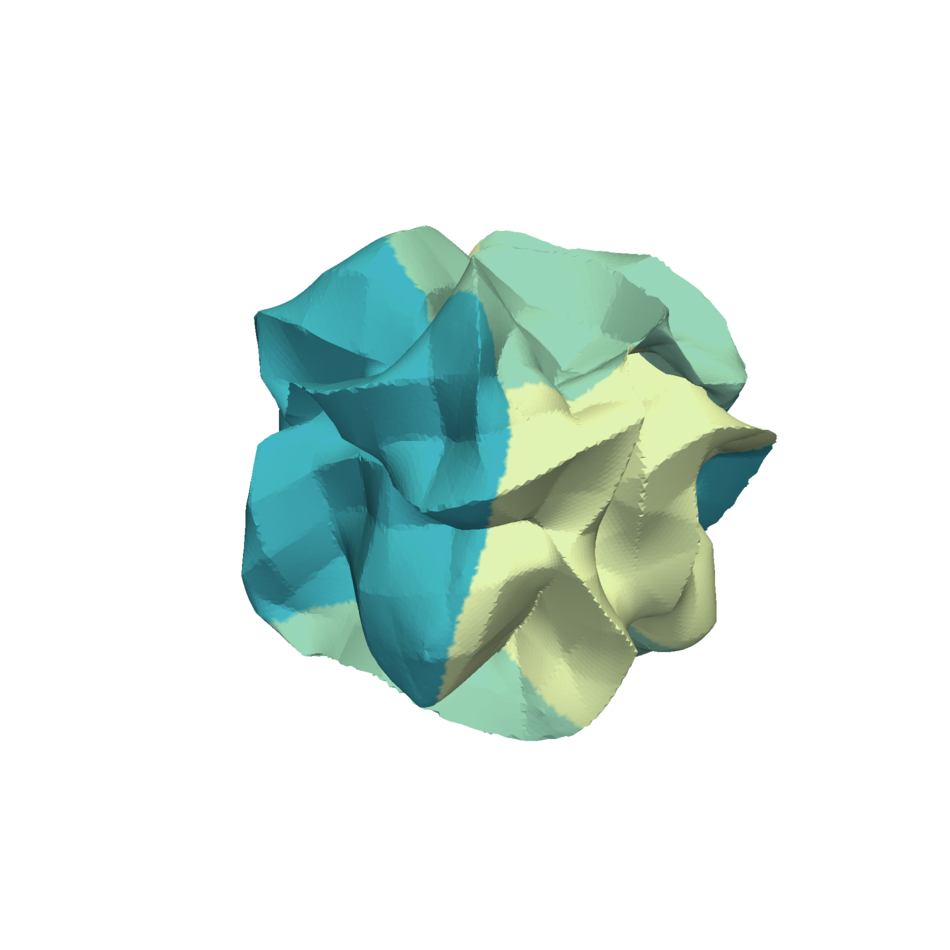}
  \hfill \mbox{}
  \caption{\label{fig:cubes}Behaviour of the optimal (numerical) deformation in the presence of strong compression. From left to right: Textured $\M_1$, $\M_2$, resulting deformed shape $\phi(\M_1)$ after grid level $7$ with our model, and corresponding result after grid level $4$ when $\P_2$ is not present in $E_{\text{mem}}$.}
\end{figure*}
%ffffffffffffffffffffffffffffffffffffffffffffffffffffffff
\paragraph*{\textbf{Shape matching applications.}}
We now turn our attention to high resolution examples with real data. Figure \ref{fig:faces} demonstrates the effect of the multilevel descent scheme, in which details are added progressively to avoid spurious local minima. In Figure \ref{fig:leaves} a high-resolution 2D example is presented. Figures \ref{fig:hand}, \ref{fig:dolphin} and \ref{fig:beets} show 3D examples in which the influence of the curvature matching is indispensable to obtain shape sensitive matching deformations. For these examples, the shell parameter $\delta$ was chosen 
%relatively high with a typical value of $\delta=0.35$, 
quite high, since the curvature matching term $E_{\text{bend}}$ is a major driving force to obtain correct matching of geometric features. Table \ref{tab:parstimes} lists the parameter values used, and run times for our implementation. We have split the timings between the highest-resolution level and the combined previous ones, since in many applications a very high level of detail might not be necessary, thereby significantly reducing the required computational effort.

\begin{table}[h!]
  \centering  
  \begin{tabular}{c|c|c|c|c|c|c}
    \toprule
    \multicolumn{1}{c}{Fig.} & 
    \multicolumn{1}{c}{$\ell_{\text{min}}, \ell_{\text{max}}$} & 
    \multicolumn{1}{c}{$\delta$} & 
    \multicolumn{1}{c}{$c_{\text{vol}}, \nu$ at $\ell_{\text{min}}$} &
    \multicolumn{1}{c}{Time, $\ell \leq (\ell_{\text{max}}-1)$} & 
    \multicolumn{1}{c}{Time, $\ell = \ell_{\text{max}}$} & 
    \multicolumn{1}{c}{DOFs at $\ell_{\text{max}}$}\\
    \midrule
    \ref{fig:faces}   & $3,8$ & 0.5  & $0.025, 0.002$ & 1h 04m  & 4h 34m  & 695K\\
    \ref{fig:hand}    & $2,8$ & 0.71 & $0.05, 0.1$    & 30m 10s & 1h 27m  & 313K\\
    \ref{fig:dolphin} & $3,8$ & 1    & $0.025, 0.002$ & 20m 04s & 50m 50s & 179K\\
    \ref{fig:beets}   & $3,8$ & 0.5  & $0.025, 0.002$ & 28m 56s & 1h 25m  & 408K
  \end{tabular}\vspace{0.3cm}
    \caption{Parameters and running times on a workstation with a single Intel Xeon E5-1650 CPU (6 cores, 3.2Ghz). Our implementation splits the computation of the different terms of the energy and the corresponding derivatives in different threads (obtaining a speedup factor $\approx 2$), but no further parallelization is used.}\label{tab:parstimes}
\end{table}

%The initial values of $\nu$ and the coefficients of the volume term $\alpha_p, \beta_q, \gamma_s$ were %set significantly smaller than $\delta$. For example, for Figure \ref{fig:hand} the computation was run %from levels $\ell_{\text{min}}=2$ to $\ell_{\text{max}}=7$, with $\delta=0.39$, $p=s=2$, $\beta_q=0$, %initial $\alpha_p=\gamma_s=0.01$, and initial $\nu = 9.8 \times 10^{-2}$.
%ffffffffffffffffffffffffffffffffffffffffffffffffffffffff
\begin{figure*}
  \centering
  \mbox{} \hfill
  \includegraphics[width=.190\linewidth]{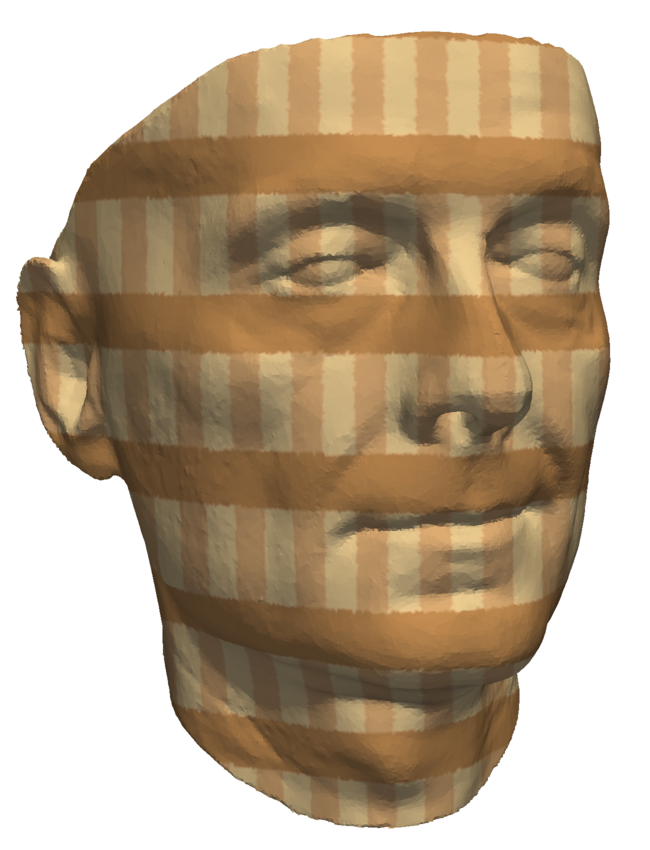}
  \hfill
  \includegraphics[width=.190\linewidth]{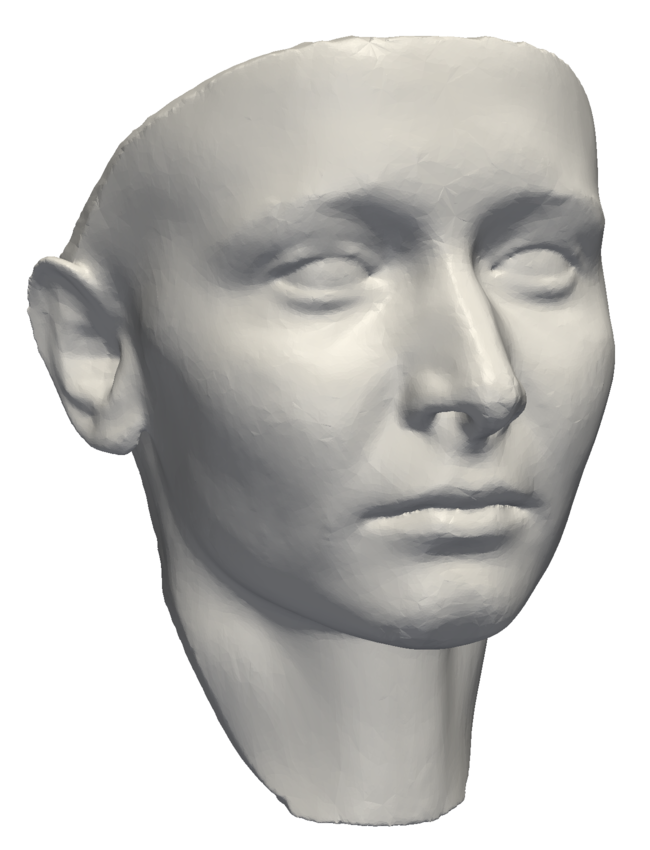}
  \hfill
  \includegraphics[width=.190\linewidth]{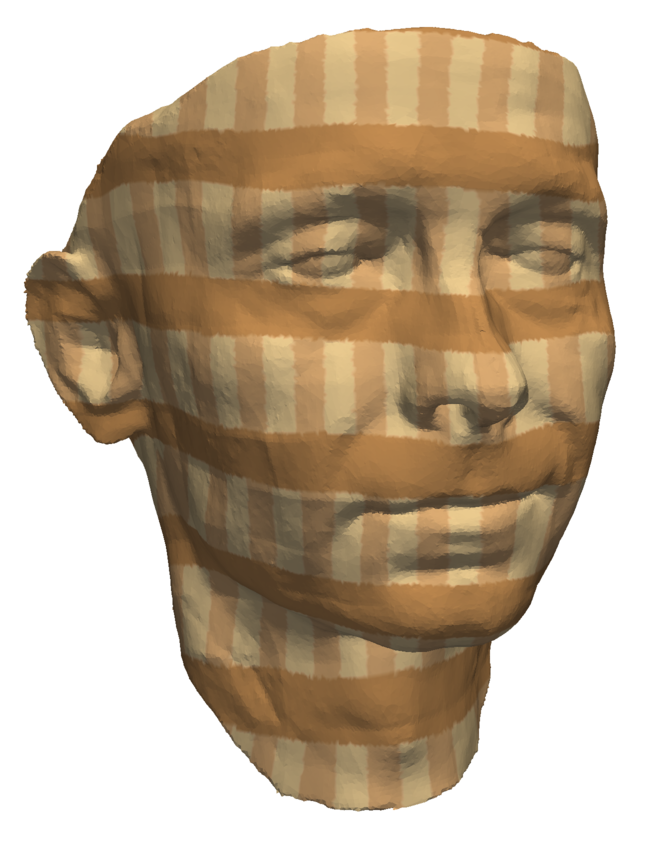}
  \hfill
  \includegraphics[width=.190\linewidth]{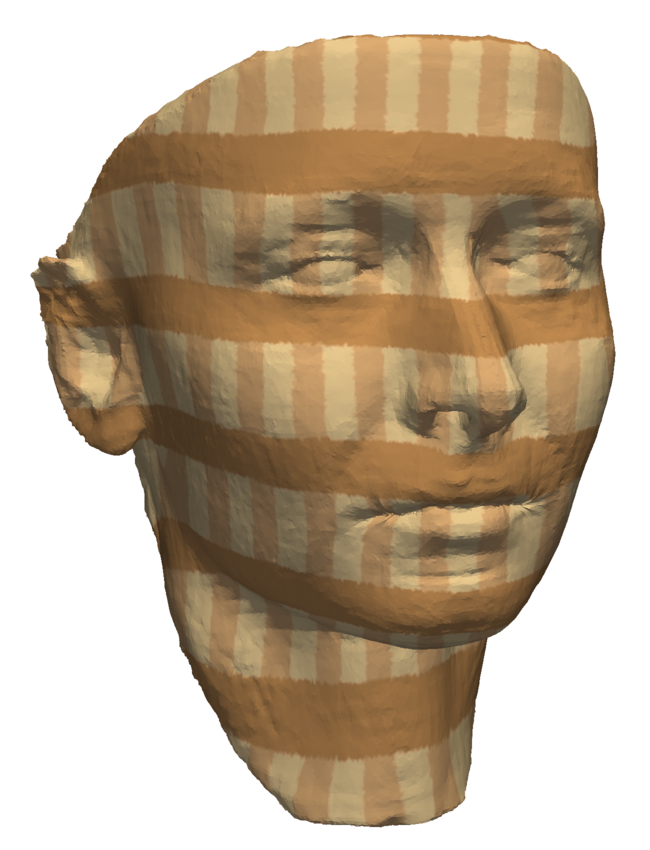}
  \hfill
  \includegraphics[width=.190\linewidth]{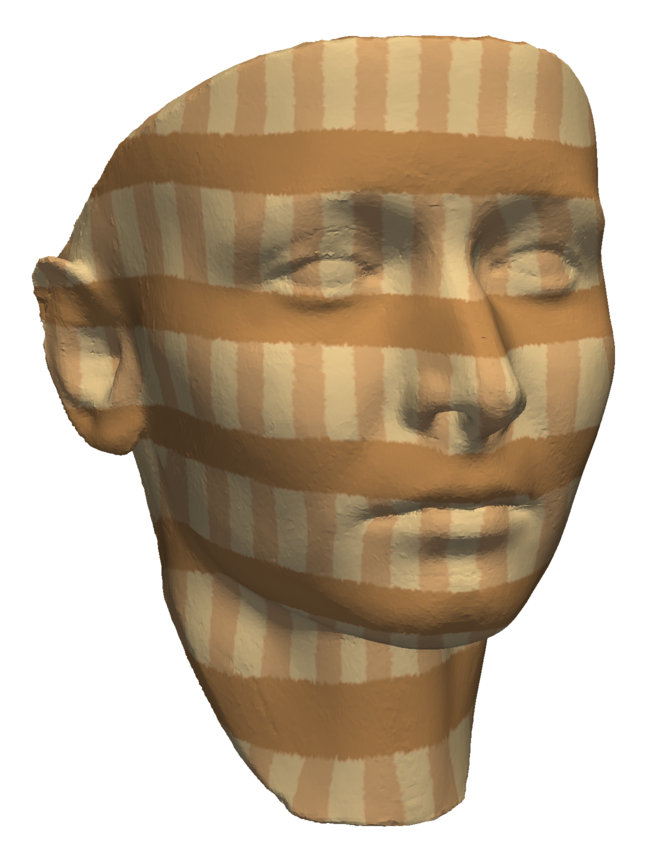}
  \hfill \mbox{}
  \caption{\label{fig:faces}Detail is added progressively in the cascadic coarse-to-fine scheme. From left to right: Textured $\M_1$, $\M_2$, resulting deformed shape $\phi(\M_1)$ after the computation on grid level $4,6$ and $8$, respectively.}
\end{figure*}
%ffffffffffffffffffffffffffffffffffffffffffffffffffffffff
%ffffffffffffffffffffffffffffffffffffffffffffffffffffffff
\begin{figure*}
  \centering
  \mbox{} \hfill
  \includegraphics[width=.22\linewidth]{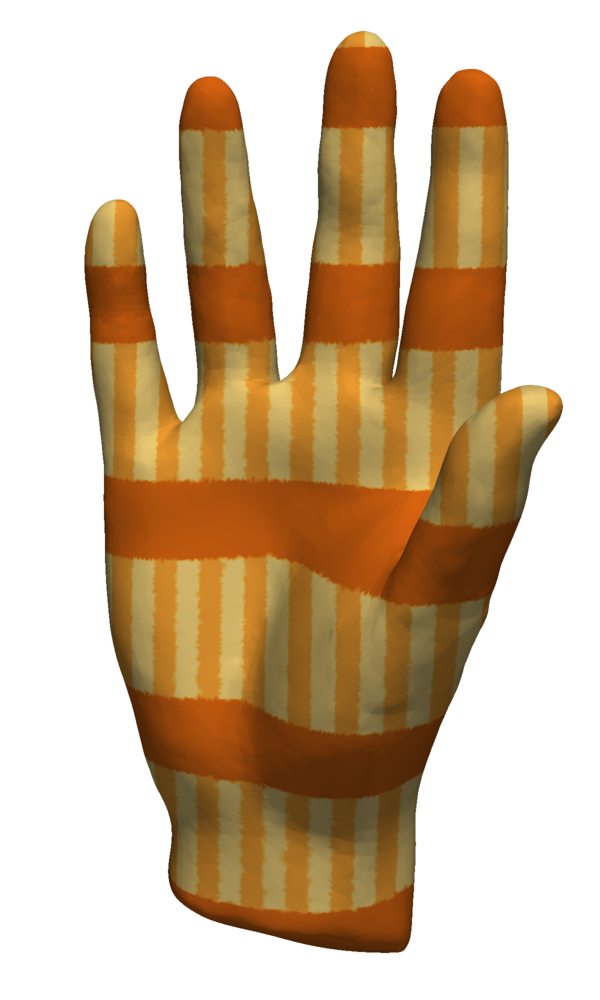}
  \hfill
  \includegraphics[width=.22\linewidth]{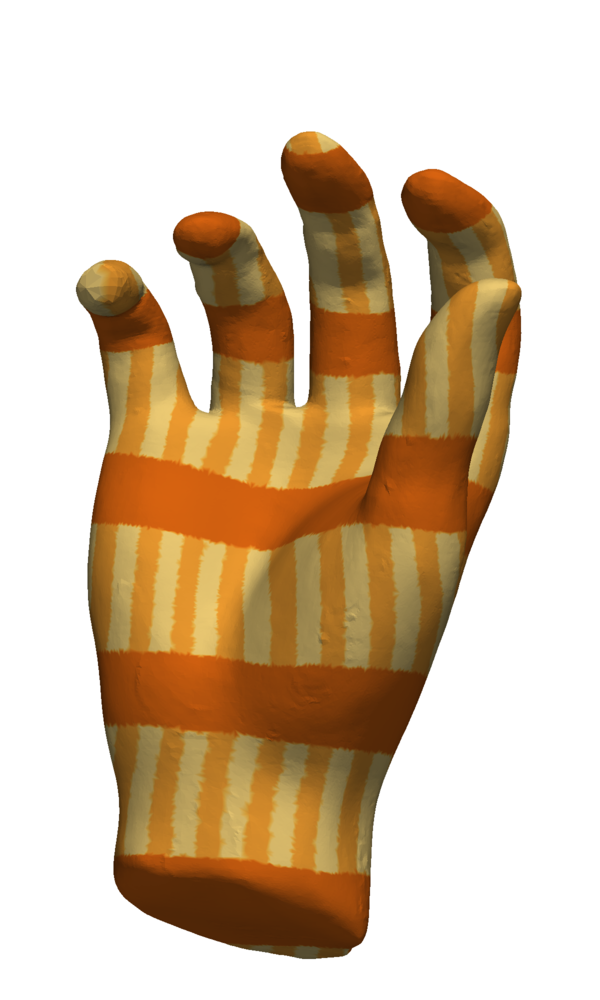}
  \hfill
  \includegraphics[width=.22\linewidth]{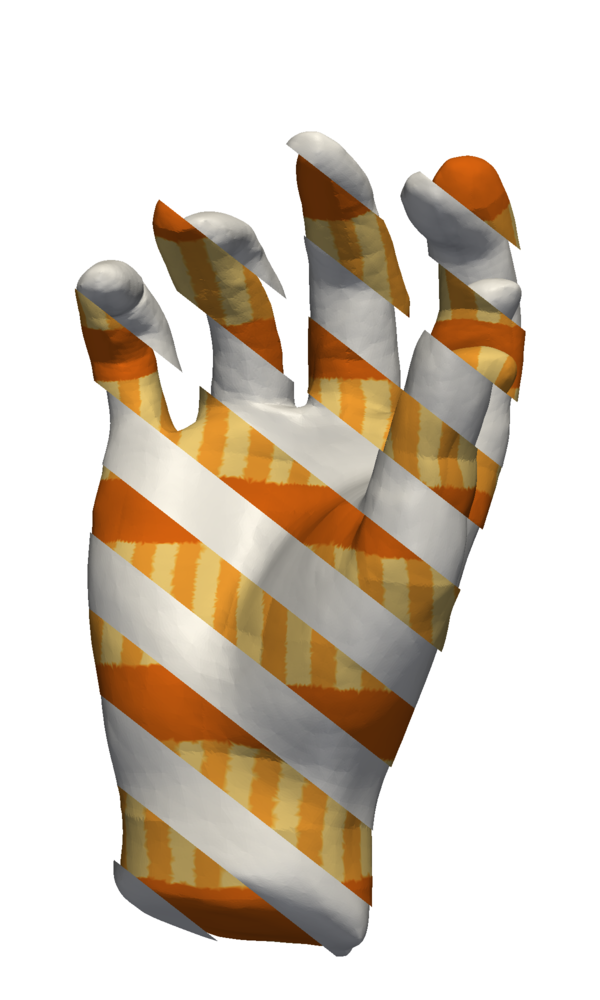}
  \hfill
  \includegraphics[width=.22\linewidth]{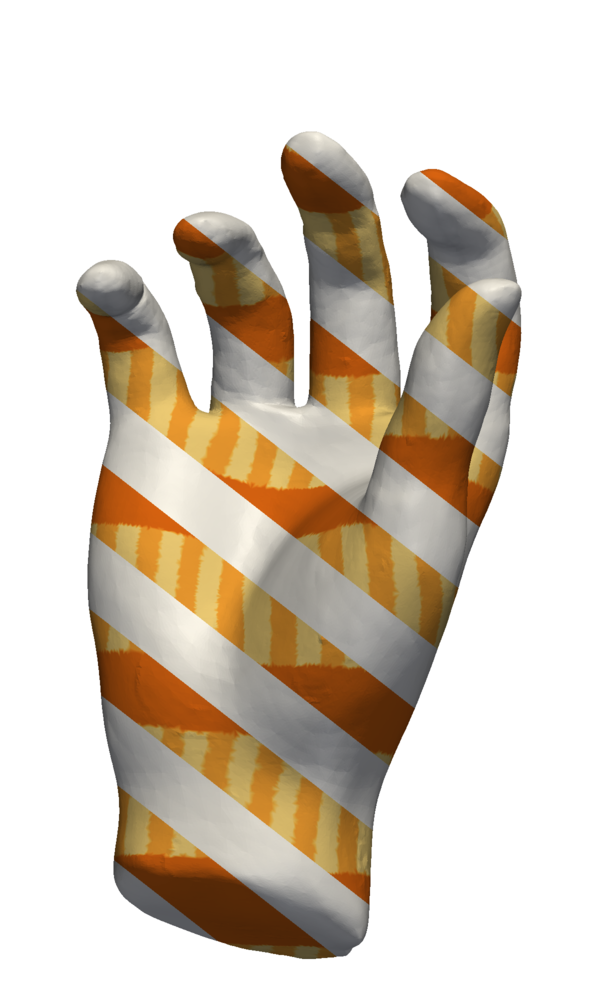}
  \hfill \mbox{}
  \caption{\label{fig:hand}From left to right: Textured hand shape $\M_1$, resulting deformed shape $\phi(\M_1)$ after level $8$ in the minimization scheme, comparison of target and obtained shapes after the computation on grid level $4$ and $8$, respectively.}
\end{figure*}
%ffffffffffffffffffffffffffffffffffffffffffffffffffffffff
%ffffffffffffffffffffffffffffffffffffffffffffffffffffffff
\begin{figure*}
  \centering
  \mbox{} \hfill
  \includegraphics[width=.192\linewidth]{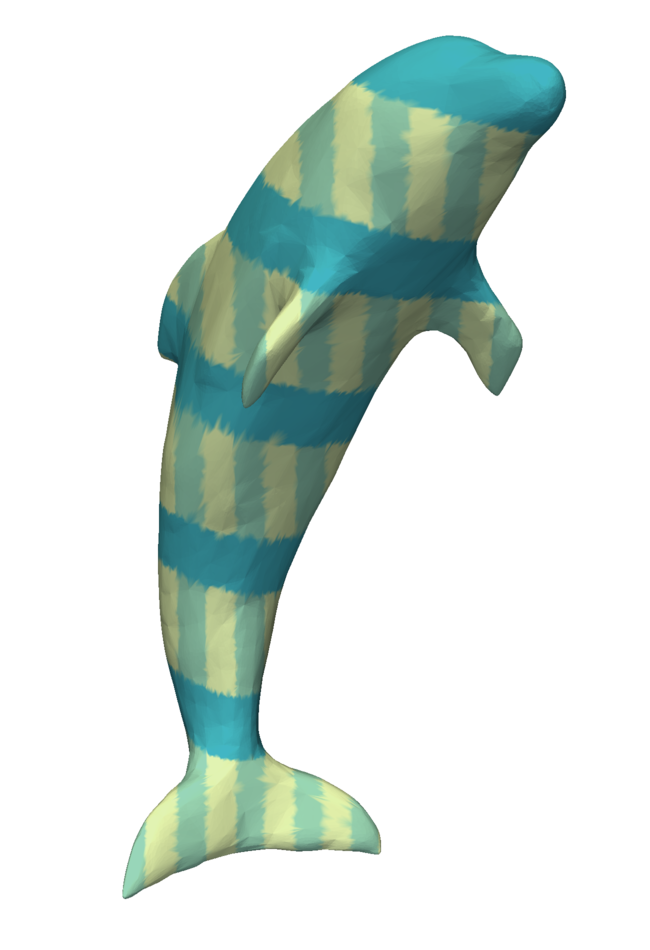}
  \hfill
  \includegraphics[width=.192\linewidth]{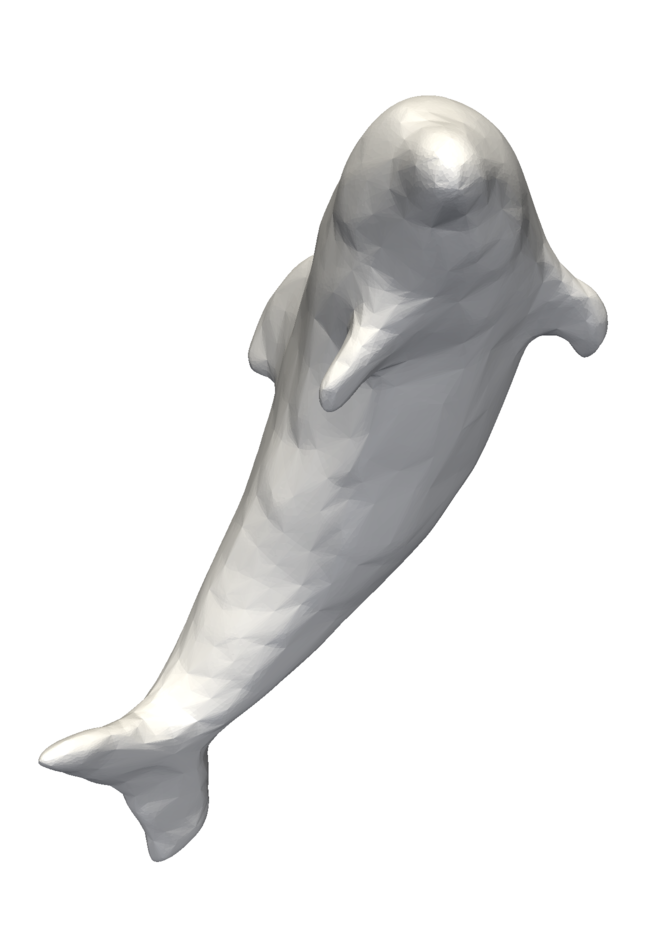}
  \hfill
  \includegraphics[width=.192\linewidth]{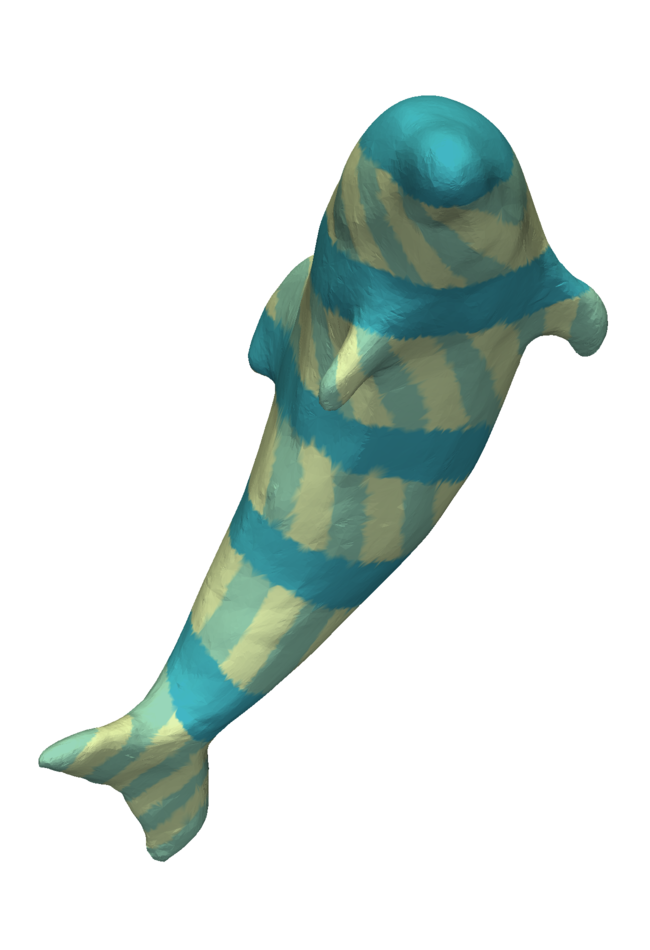}
  \hfill
  \includegraphics[width=.192\linewidth]{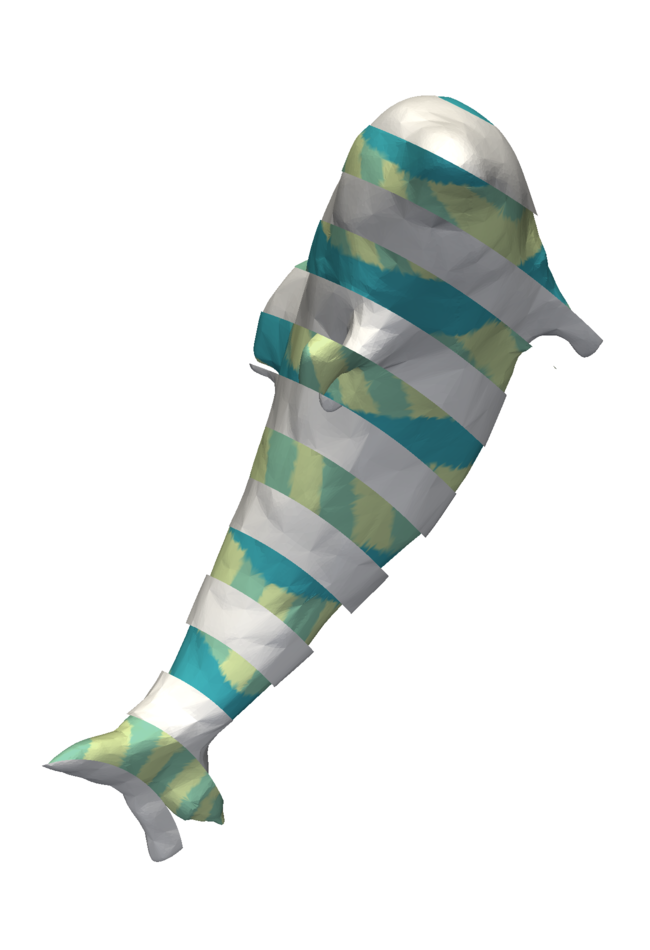}
  \hfill
  \includegraphics[width=.192\linewidth]{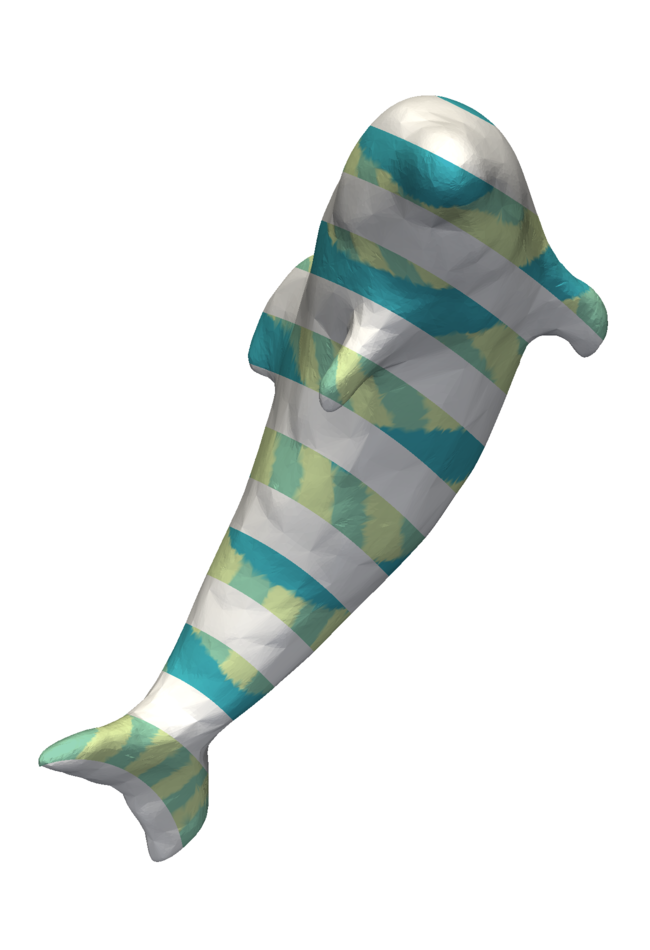}
  \hfill \mbox{}
  \caption{\label{fig:dolphin}From left to right: Textured dolphin $\M_1$, $\M_2$, resulting deformed shape $\phi(\M_1)$ after level $8$ in the minimization scheme, comparison of target and obtained shapes after the computation on grid level $4$ and $8$, respectively. The corresponding final grid is depicted in Figure \ref{fig:grids}.}
\end{figure*}
%ffffffffffffffffffffffffffffffffffffffffffffffffffffffff
%ffffffffffffffffffffffffffffffffffffffffffffffffffffffff
\begin{figure*}
  \centering
  \mbox{} \hfill
  \includegraphics[width=.24\linewidth]{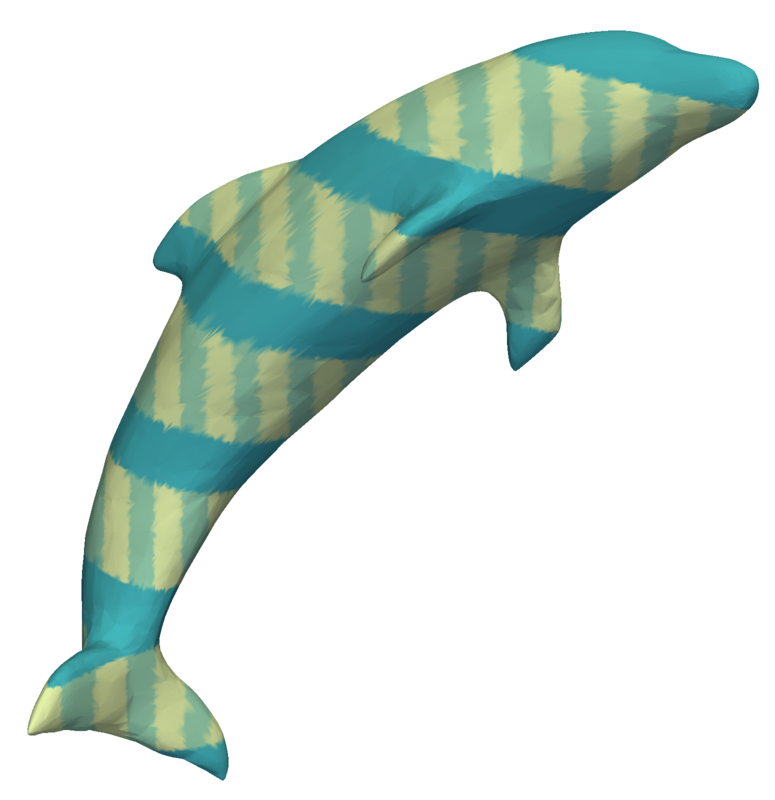}
  \hfill
  \includegraphics[width=.24\linewidth]{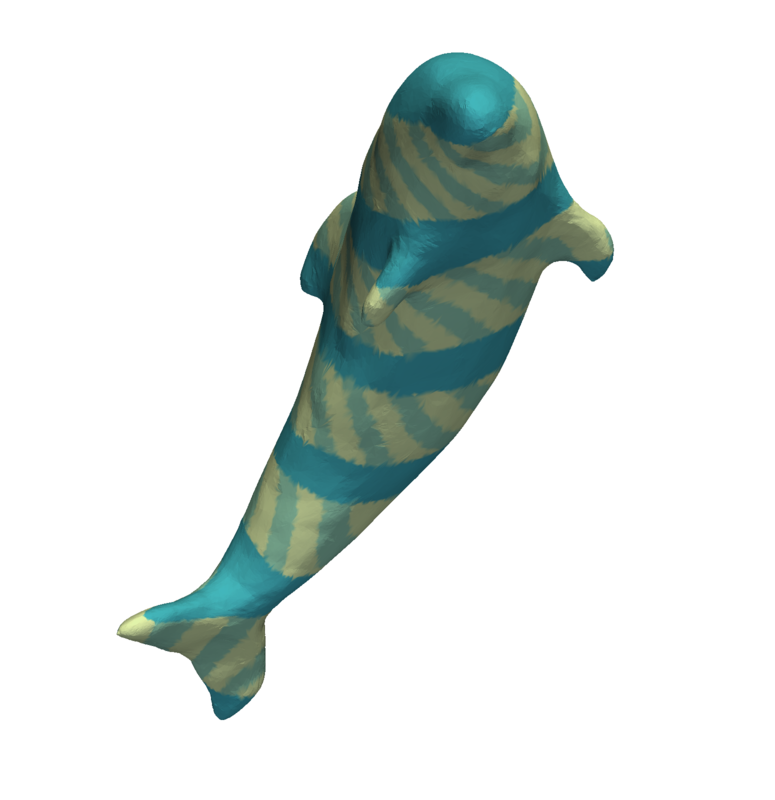}
  \hfill
  \vline
  \hfill
  \includegraphics[width=.24\linewidth]{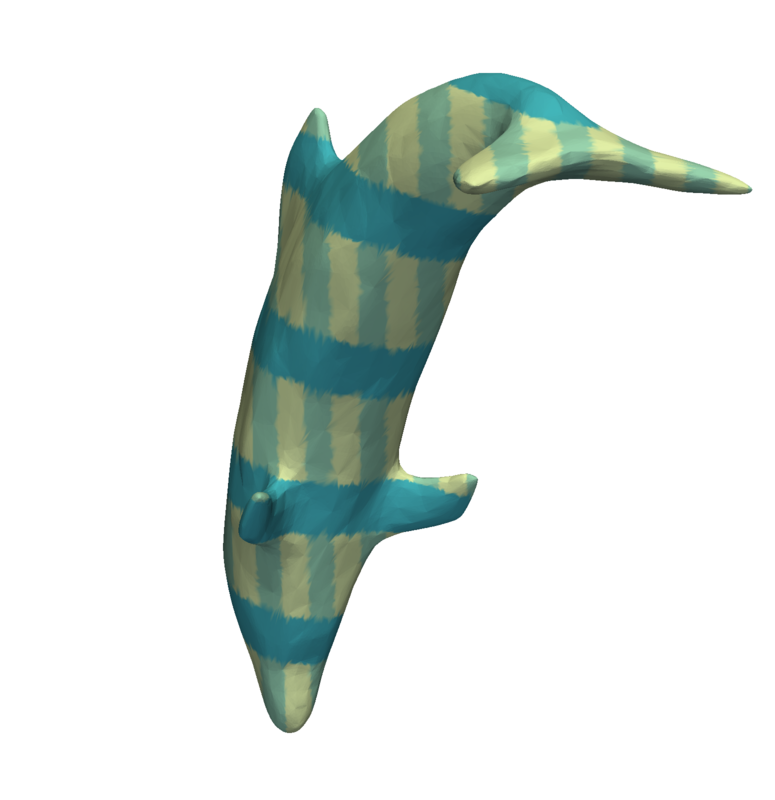}
  \hfill
  \includegraphics[width=.24\linewidth]{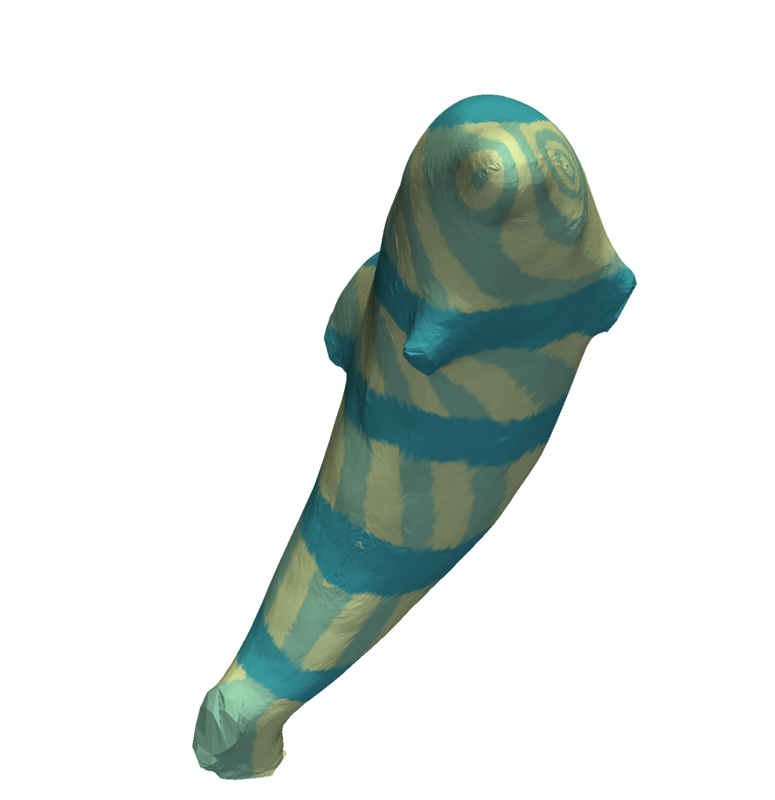}
  \hfill \mbox{}
  \caption{\label{fig:dolphinRot}From left to right: Initial shape of Figure \ref{fig:dolphin} after undergoing a rotation of $\pi/6$, deformed shape after level $8$ in the minimization (correct matching), after a rotation of $\pi$, and corresponding result (incorrect matching). Moderate changes in the initial alignment are handled correctly, while drastic ones are not.}
\end{figure*}
%ffffffffffffffffffffffffffffffffffffffffffffffffffffffff
%ffffffffffffffffffffffffffffffffffffffffffffffffffffffff
\begin{figure*}
  \centering
  \mbox{} \hfill
  \includegraphics[width=.23\linewidth]{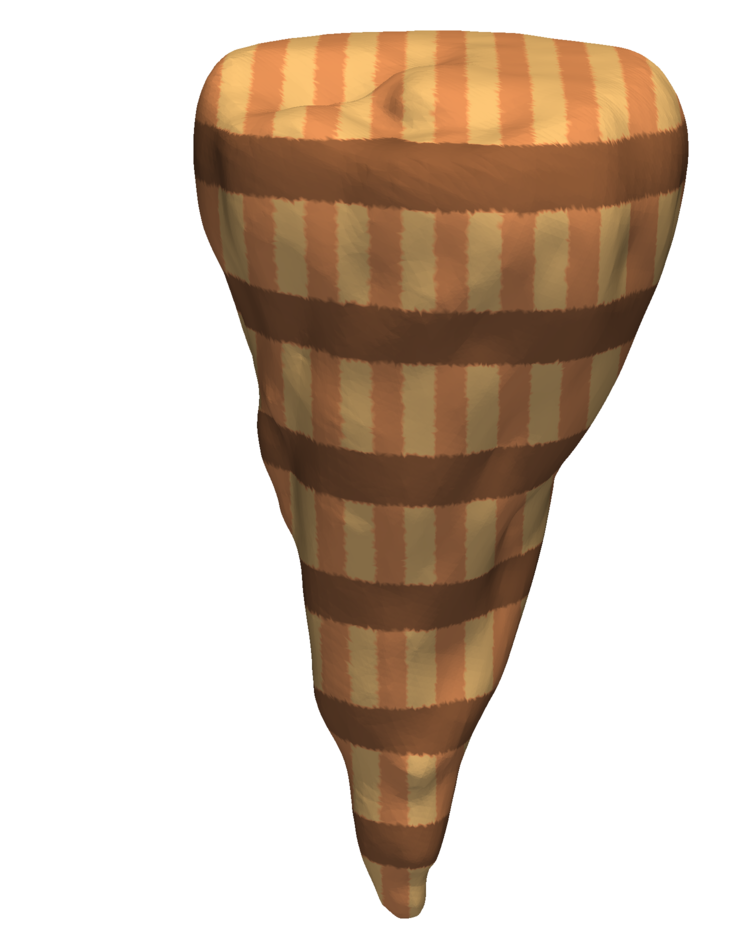}
  \hfill
  \includegraphics[width=.23\linewidth]{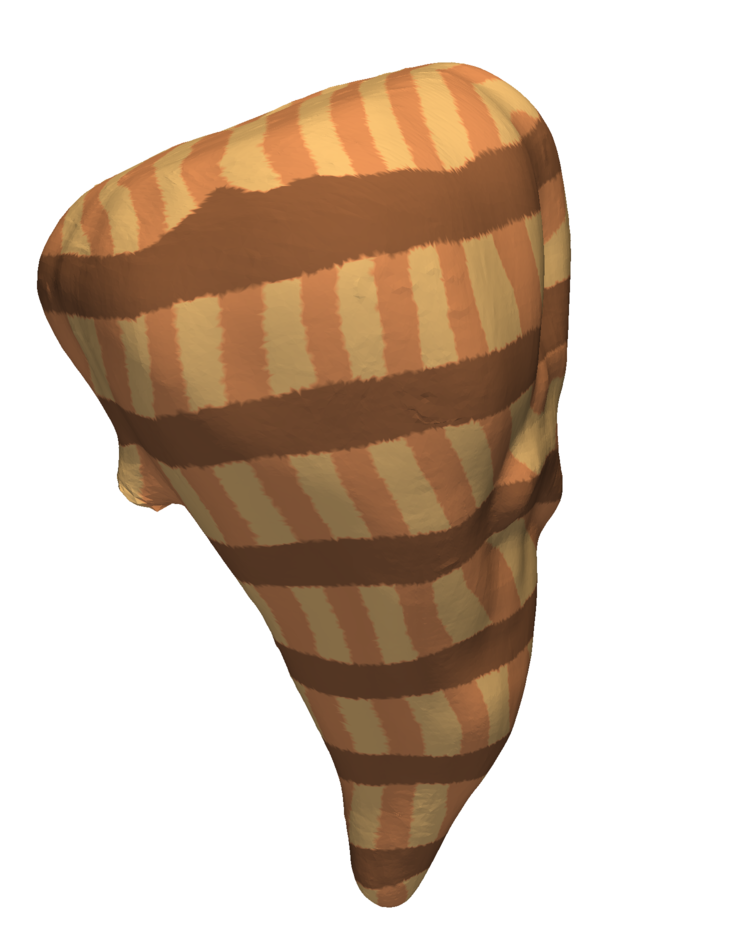}
  \hfill
  \includegraphics[width=.23\linewidth]{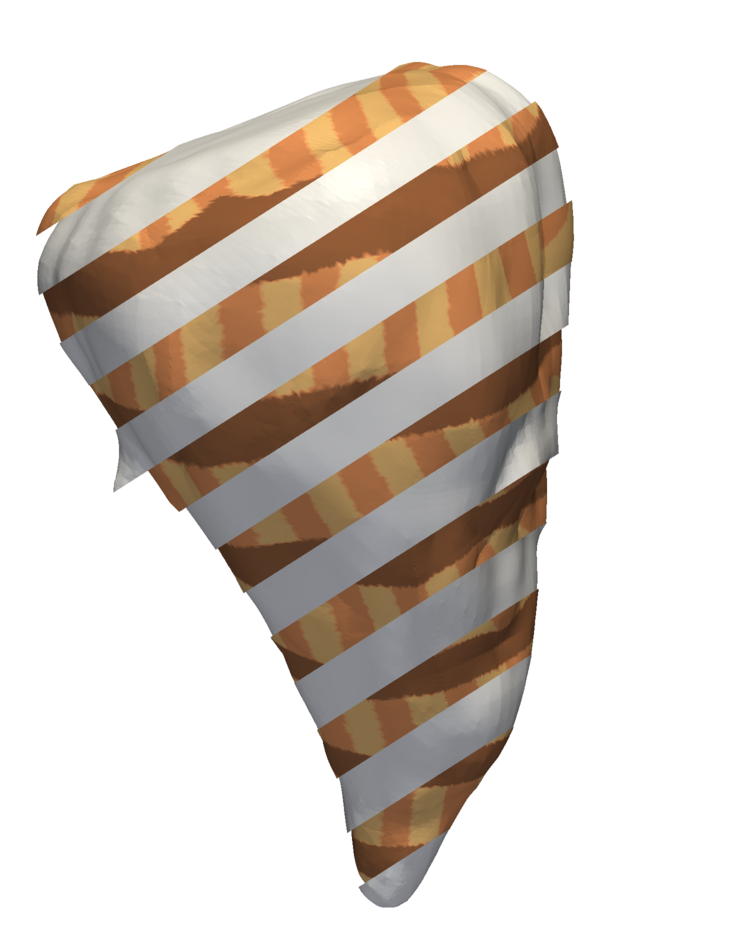}
  \hfill
  \includegraphics[width=.23\linewidth]{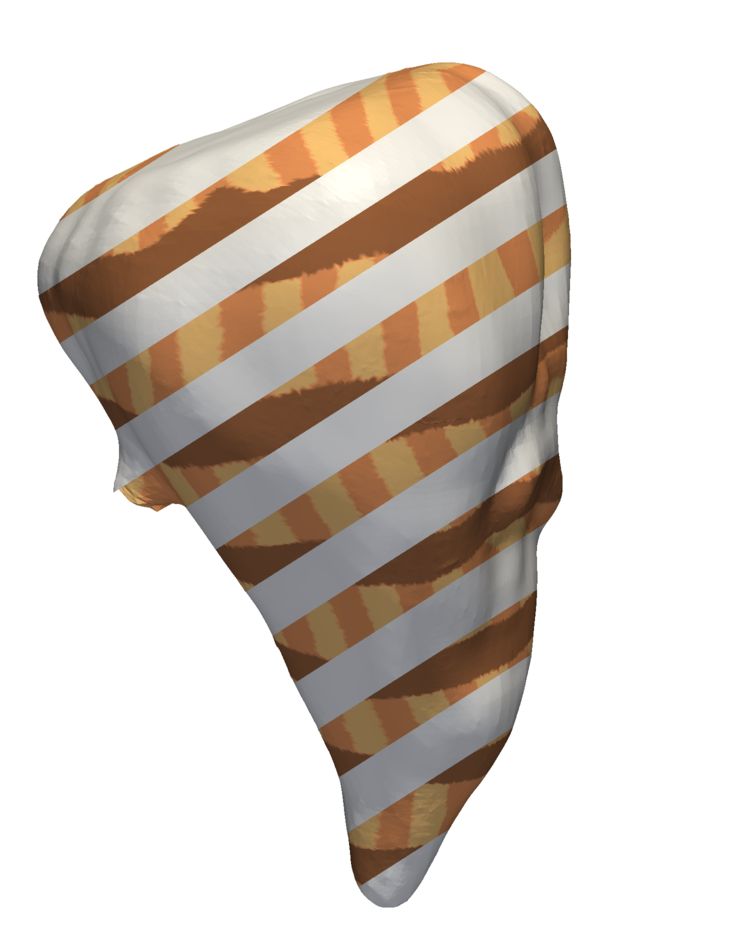}
  \hfill \mbox{}
  \caption{\label{fig:beets}From left to right: Textured sugar beet shape $\M_1$, resulting deformed shape $\phi(\M_1)$ after level $8$ in the minimization scheme, comparison of target sugar beet shape 
  and obtained shapes after the computation on grid level $4$ and $8$, respectively.}
\end{figure*}
%ffffffffffffffffffffffffffffffffffffffffffffffffffffffff
%ffffffffffffffffffffffffffffffffffffffffffffffffffffffff
\begin{figure*}
  \centering
  \mbox{} \hfill
  \raisebox{-0.5\height}{\includegraphics[width=.190\linewidth]{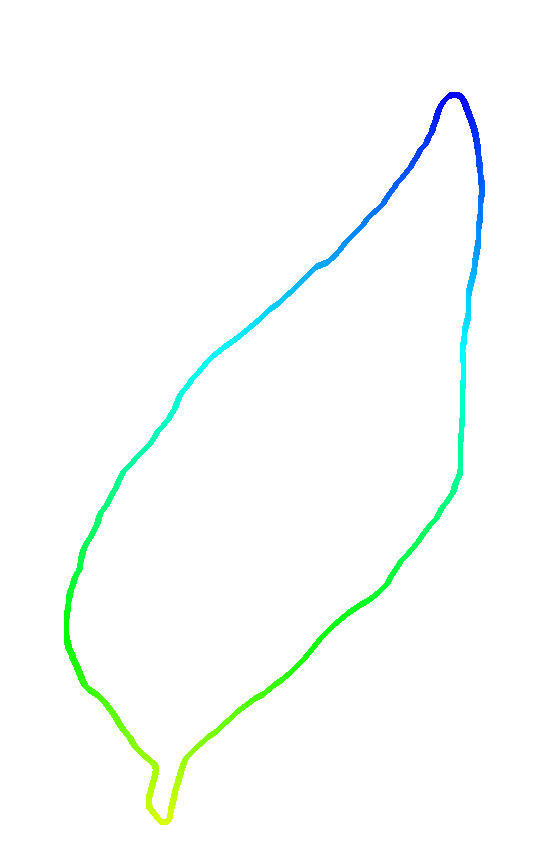}}
  \hfill
  \raisebox{-0.5\height}{\includegraphics[width=.190\linewidth]{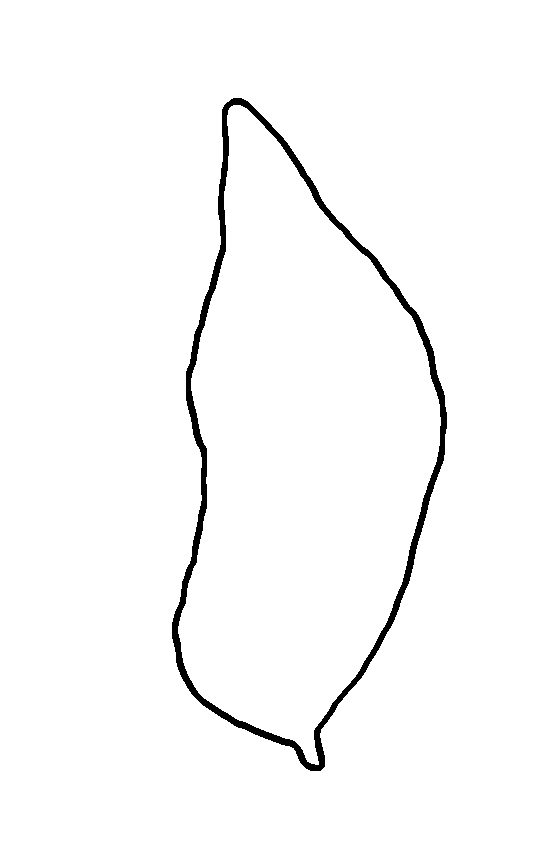}}
  \hfill
  \raisebox{-0.5\height}{\includegraphics[width=.190\linewidth]{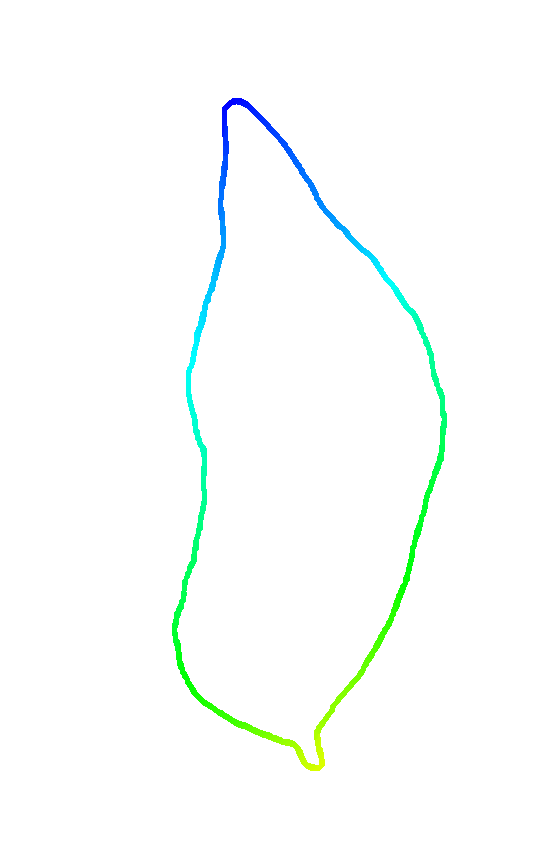}}
  \hfill
  \raisebox{-0.5\height}{\includegraphics[width=.400\linewidth]{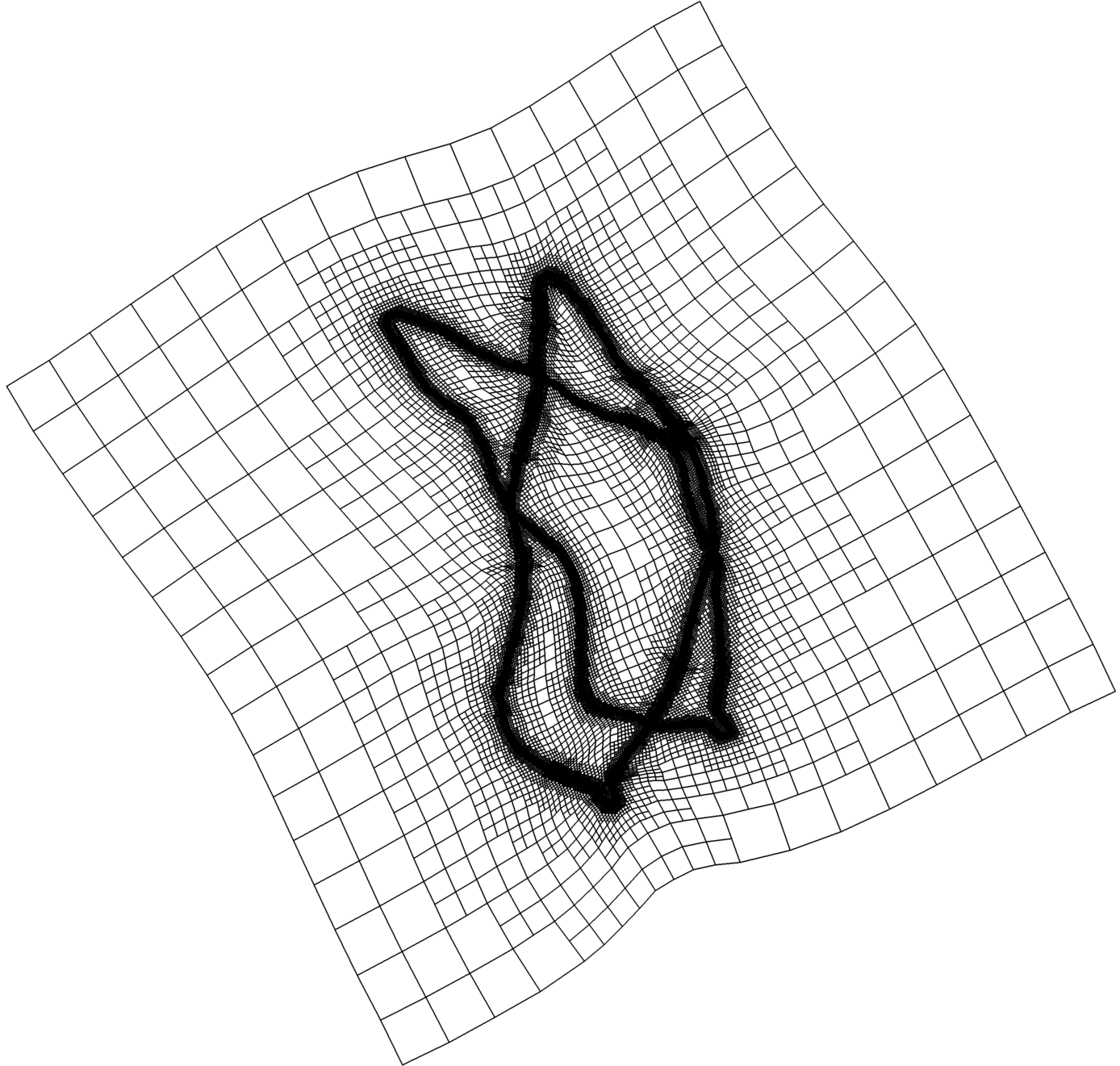}}
  \hfill \mbox{}
  \caption{\label{fig:leaves}2D example. From left to right: Colored leaf contours $\M_1$, $\M_2$, resulting deformed leaf shape $\phi(\M_1)$ after after the computation on grid level $10$ and corresponding deformed grid. The corresponding undeformed grid is depicted in Figure \ref{fig:grids}.}
\end{figure*}
%ffffffffffffffffffffffffffffffffffffffffffffffffffffffff

%
%%%%%%%%%%%%%%%%%%%%%%%%%%%%%%%%%%%%%%%%%%%%%%
%%%%%%%%%%%%%%%%%%%%%%%%%%%%%%%%%%%%%%%%%%%%%%
%%%%%%%%%%%%%%%%%%%%%%%%%%%%%%%%%%%%%%%%%%%%%%
%\section{Conclusions}\label{sec:conc}
%\MR{} \OS{}

%%%%%%%%%%%%%%%%%%%%%%%%%%%%%%%%%%%%%%%%%%%%%
%%%%%%%%%%%%%%%%%%%%%%%%%%%%%%%%%%%%%%%%%%%%%
%%%%%%%%%%%%%%%%%%%%%%%%%%%%%%%%%%%%%%%%%%%%%
\section*{Acknowledgements}
This research was supported by the Austrian Science Fund (FWF) through the National Research Network `Geometry+Simulation' (NFN S117) and Doctoral Program `Dissipation and Dispersion
in Nonlinear PDEs' (W1245). Furthermore, the authors acknowledge support of the Hausdorff Center for Mathematics at the University of Bonn. We would like to thank the anonymous reviewers for comments that have led to substantial improvements in this paper. The shapes for Figure \ref{fig:dolphin} are originally from the McGill 3D Shape Benchmark \cite{SidZhaMacShoBouDic08}. The scanned faces of Figure \ref{fig:faces} are part of the 3D Basel Face Model dataset \cite{PayKnoAmbVet09}. The laser-scanned sugar beets of Figure \ref{fig:beets} and the original shapes for Figure \ref{fig:hand} were kindly provided by Behrend Heeren.
%%%%%%%%%%%%%%%%%%%%%%%%%%%%%%%%%%%%%%%%%%%%%
%%%%%%%%%%%%%%%%%%%%%%%%%%%%%%%%%%%%%%%%%%%%%
%%%%%%%%%%%%%%%%%%%%%%%%%%%%%%%%%%%%%%%%%%%%%

\bibliographystyle{plain}
%\bibliography{\BibPath strings,\BibPath articles,\BibPath books,../bibtex/Shells,shells}
\bibliography{IgRuSc14}

\begin{thebibliography}{10}

\bibitem{Ada03}
R.~A. Adams and J.~J.~F. Fournier.
\newblock {\em Sobolev spaces}, volume 140 of {\em Pure and Applied Mathematics
  (Amsterdam)}.
\newblock Elsevier/Academic Press, Amsterdam, second edition, 2003.

\bibitem{BaAa05}
J.~A. Baerentzen and H.~Aanaes.
\newblock Signed distance computation using the angle weighted pseudonormal.
\newblock {\em IEEE Trans. Vis. Comput. Graphics}, 11(3):243--253, May 2005.

\bibitem{Bal77}
J.~M. Ball.
\newblock Convexity conditions and existence theorems in nonlinear elasticity.
\newblock {\em Arch. Ration. Mech. Anal.}, 63:337--403, 1977.

\bibitem{Bal81}
J.~M. Ball.
\newblock Global invertibility of {S}obolev functions and the interpenetration
  of matter.
\newblock {\em Proc. Roy. Soc. Edinburgh Sect. A}, 88:315--328, 1981.

\bibitem{BoNoPa10}
A.~Bonito, R.~H. Nochetto, and M.~S. Pauletti.
\newblock Parametric {FEM} for geometric biomembranes.
\newblock {\em J. Comput. Phys.}, 229:3171--3188, 2010.

\bibitem{BotPauGroKob06}
M.~Botsch, M.~Pauly, M.~Gross, and L.~Kobbelt.
\newblock {PriMo}: Coupled prisms for intuitive surface modeling.
\newblock In {\em Proceedings of the Fourth Eurographics Symposium on Geometry
  Processing}, pages 11--20. Eurographics Association, 2006.

\bibitem{BotSor08}
M.~Botsch and O.~Sorkine.
\newblock On linear variational surface deformation methods.
\newblock {\em IEEE Trans. Vis. Comput. Graph.}, 14(1):213--230, January 2008.

\bibitem{Bra02}
A.~Braides.
\newblock {\em {$\Gamma$}-convergence for beginners}, volume~22 of {\em Oxford
  Lecture Series in Mathematics and its Applications}.
\newblock Oxford University Press, Oxford, 2002.

\bibitem{BrePocWir13}
K.~Bredies, T.~Pock, and B.~Wirth.
\newblock Convex relaxation of a class of vertex penalizing functionals.
\newblock {\em J. Math. Imaging Vis.}, 47(3):278--302, 2013.

\bibitem{BrePocWir15}
K.~Bredies, T.~Pock, and B.~Wirth.
\newblock A convex, lower semicontinuous approximation of {E}uler's elastica
  energy.
\newblock {\em SIAM J. Math. Anal.}, 47(1):566--613, 2015.

\bibitem{BroBroKim08}
A.M. Bronstein, M.M. Bronstein, and R.~Kimmel.
\newblock {\em Numerical Geometry of Non-Rigid Shapes}.
\newblock Monographs in Computer Science. Springer, 2008.

\bibitem{BurModRut13}
M.~Burger, J.~Modersitzki, and L.~Ruthotto.
\newblock A hyperelastic regularization energy for image registration.
\newblock {\em SIAM J. Sci. Comput.}, 35(1):B132--B148, 2013.

\bibitem{ChaPinSanSch10}
I.~Chao, U.~Pinkall, P.~Sanan, and P.~Schr\"{o}der.
\newblock A simple geometric model for elastic deformations.
\newblock {\em ACM Trans. Graph.}, 29:38:1--38:6, July 2010.

\bibitem{CharFauFer05}
G.~Charpiat, O.~Faugeras, and R.~Keriven.
\newblock Approximations of shape metrics and application to shape warping and
  empirical shape statistics.
\newblock {\em Found. Comp. Math.}, 5:1--58, 2004.

\bibitem{ChHu97}
Z.~Chen and Z.~Huan.
\newblock On the continuity of the m-th root of a continuous nonnegative
  definite matrix-valued function.
\newblock {\em J. Math. Anal. Appl.}, 209(1):60 -- 66, 1997.

\bibitem{Cia88}
P.~G. Ciarlet.
\newblock {\em Mathematical elasticity, volume I: Three-dimensional
  elasticity}.
\newblock North-Holland, Amsterdam, 1988.

\bibitem{Cia00}
P.~G. Ciarlet.
\newblock {\em Mathematical elasticity, volume III: Theory of shells}.
\newblock North-Holland, Amsterdam, 2000.

\bibitem{CiGe82}
P.G. Ciarlet and G.~Geymonat.
\newblock Sur les lois de comportement en élasticité non linéaire
  compressible.
\newblock {\em CR Acad. Sci. Paris Sér. II}, 295:423--426, 1982.

\bibitem{ColeRiSt09}
T.~H. Cormen, C.~E. Leiserson, R.~L. Rivest, and C.~Stein.
\newblock {\em Introduction to {A}lgorithms, 3rd Edition}.
\newblock The MIT Press, 3rd edition, 2009.

\bibitem{Dac08}
B.~Dacorogna.
\newblock {\em Direct methods in the calculus of variations}, volume~78 of {\em
  Applied Mathematical Sciences}.
\newblock Springer, New York, second edition, 2008.

\bibitem{DelZol94}
M.~C. Delfour and J.-P. Zol{\'e}sio.
\newblock Shape analysis via oriented distance functions.
\newblock {\em J. Funct. Anal.}, 123:129--201, 1994.

\bibitem{DelZol95}
M.~C. Delfour and J.-P. Zol{\'e}sio.
\newblock A boundary differential equation for thin shells.
\newblock {\em J. Differential Equations}, 119(2):426--449, 1995.

\bibitem{DelZol11}
M.~C. Delfour and J.-P. Zol{\'e}sio.
\newblock {\em Shapes and geometries}, volume~22 of {\em Advances in Design and
  Control}.
\newblock Society for Industrial and Applied Mathematics (SIAM), Philadelphia,
  PA, second edition, 2011.
\newblock Metrics, analysis, differential calculus, and optimization.

\bibitem{ElaKim03}
A.~Elad and R.~Kimmel.
\newblock On bending invariant signatures for surfaces.
\newblock {\em IEEE Trans. Pattern Anal. Mach. Intell.}, 25(10):1285--1295,
  2003.

\bibitem{EvaGar87}
L.~C. Evans and R.~F. Gariepy.
\newblock Some remarks concerning quasiconvexity and strong convergence.
\newblock {\em Proc. Roy. Soc. Edinburgh Sect. A}, 106:53--61, 1 1987.

\bibitem{Fra89}
L.~P. Franca.
\newblock An algorithm to compute the square root of a {$3\times 3$} positive
  definite matrix.
\newblock {\em Comput. Math. Appl.}, 18(5):459--466, 1989.

\bibitem{Fri82}
S.~Friedland.
\newblock Variation of tensor powers and spectra.
\newblock {\em Linear and Multilinear Algebra}, 12(2):81--98, 1982/83.

\bibitem{FriJamMorMul03}
G.~Friesecke, R.~D. James, M.~G. Mora, and S.~M{\"u}ller.
\newblock Derivation of nonlinear bending theory for shells from
  three-dimensional nonlinear elasticity by {G}amma-convergence.
\newblock {\em C.R.A.S. Ser. I: Math}, 336(8):697 -- 702, 2003.

\bibitem{FucJueSchYan09a}
M.~Fuchs, B.~J{\"u}ttler, O.~Scherzer, and H.~Yang.
\newblock Shape metrics based on elastic deformations.
\newblock {\em J. Math. Imaging Vis.}, 35(1):86--102, 2009.

\bibitem{GriHirDesSch03}
E.~Grinspun, A.~N. Hirani, M.~Desbrun, and P.~Schr\"{o}der.
\newblock Discrete shells.
\newblock In {\em Proceedings of the 2003 ACM SIGGRAPH/Eurographics symposium
  on Computer animation}, SCA '03, pages 62--67. Eurographics Association,
  2003.

\bibitem{GuiPol74}
V.~Guillemin and A.~Pollack.
\newblock {\em Differential topology}.
\newblock Prentice-Hall, Inc., Englewood Cliffs, N.J., 1974.

\bibitem{IgBeRuSc13}
J.A. Iglesias, B.~Berkels, M.~Rumpf, and O.~Scherzer.
\newblock A thin shell approach to the registration of implicit surfaces.
\newblock In {\em Proceedings of the Vision, Modeling, and Visualization
  Workshop 2013}, pages 89--96. Eurographics Association, 2013.

\bibitem{KabFraFis06}
S.~Kabus, A.~Franz, and B.~Fischer.
\newblock Variational image registration with local properties.
\newblock In J.~P.~W. Pluim, B.~Likar, and F.~A. Gerritsen, editors, {\em
  Biomedical Image Registration}, volume 4057 of {\em Lecture Notes in Computer
  Science}, pages 92--100. Springer Berlin Heidelberg, 2006.

\bibitem{KabLor10}
S.~Kabus and C.~Lorenz.
\newblock Fast elastic image registration.
\newblock In {\em Proceedings of the Medical Image Analysis For The Clinic: A
  Grand Challenge, MICCAI, Beijing, China}, pages 81--89, 2010.

\bibitem{KeRi05}
S.~L. Keeling and W.~Ring.
\newblock Medical image registration and interpolation by optical flow with
  maximal rigidity.
\newblock {\em J. Math. Imaging Vision}, 23(1):47--65, 2005.

\bibitem{Koi66}
W.~T. Koiter.
\newblock On the nonlinear theory of elastic shells.
\newblock {\em Proc. Kon. Ned. Akad. Wetesnch.}, (B69):1--54, 1966.

\bibitem{Kui55}
N.~H. Kuiper.
\newblock On {$C^1$}-isometric imbeddings {I}.
\newblock {\em Nederl. Akad. Wetensch. Proc. Ser. A.}, 58:545--556, 1955.

\bibitem{KybUns03}
J.~Kybic and M.~Unser.
\newblock Fast parametric elastic image registration.
\newblock {\em IEEE Trans. Image Process.}, 12(11):1427--1442, 11 2003.

\bibitem{LedRao95a}
H.~Le~Dret and A.~Raoult.
\newblock The quasiconvex envelope of the {Saint Venant\mbox{-}Kirchhoff}
  stored energy function.
\newblock {\em Proc. Roy. Soc. Edinburgh Sect. {A}}, 125:1179--1192, 1 1995.

\bibitem{LeeLai08}
T.-Y. Lee and S.-H. Lai.
\newblock {3D} non-rigid registration for {MPU} implicit surfaces.
\newblock In {\em CVPR Workshop on Non-Rigid Shape Analysis and Deformable
  Image Alignment}, 2008.

\bibitem{Leo09}
G.~Leoni.
\newblock {\em A first course in {S}obolev spaces}, volume 105 of {\em Graduate
  Studies in Mathematics}.
\newblock American Mathematical Society, Providence, RI, 2009.

\bibitem{LiEtal08}
X.~Li, Y.~Bao, X.~Guo, M.~Jin, X.~Gu, and H.~Qin.
\newblock Globally optimal surface mapping for surfaces with arbitrary
  topology.
\newblock {\em IEEE Trans. Vis. Comput. Graph.}, 14(4):805--819, 2008.

\bibitem{LiNir05}
Y.Y. Li and L.~Nirenberg.
\newblock The distance function to the boundary, {F}insler geometry, and the
  singular set of viscosity solutions of some {H}amilton-{J}acobi equations.
\newblock {\em Comm. Pure Appl. Math.}, 58(1):85--146, 2005.

\bibitem{DrLiRuSc05}
N.~Litke, M.~Droske, M.~Rumpf, and P.~Schr{\"{o}}der.
\newblock An image processing approach to surface matching.
\newblock In M.~Desbrun and H.~Pottmann, editors, {\em Symposium on Geometry
  Processing}, pages 207--216, 2005.

\bibitem{ManMen03}
C.~Mantegazza and A.~C. Mennucci.
\newblock Hamilton-{J}acobi equations and distance functions on {R}iemannian
  manifolds.
\newblock {\em Appl. Math. Optim.}, 47(1):1--25, 2003.

\bibitem{MaMi72}
M.~Marcus and V.~J. Mizel.
\newblock Absolute continuity on tracks and mappings of {S}obolev spaces.
\newblock {\em Arch. Ration. Mech. Anal.}, 45(4):294--320, 1972.

\bibitem{Meg98}
R.~E. Megginson.
\newblock {\em An introduction to {B}anach space theory}, volume 183 of {\em
  Graduate Texts in Mathematics}.
\newblock Springer, 1998.

\bibitem{Mod04}
J.~Modersitzki.
\newblock {\em Numerical Methods for Image Registration}.
\newblock OUP Oxford, 2004.

\bibitem{MuRa12}
D.~P. Mukherjee and N.~Ray.
\newblock Contour interpolation using level-set analysis.
\newblock {\em Int. J. Img. Graph.}, 12(1):1250004, 2012.

\bibitem{Mur87}
F.~Murat.
\newblock A survey on compensated compactness.
\newblock In {\em Contributions to modern calculus of variations ({B}ologna,
  1985)}, volume 148 of {\em Pitman Res. Notes Math. Ser.}, pages 145--183.
  Longman Sci. Tech., Harlow, 1987.

\bibitem{Nas54}
J.~Nash.
\newblock {$C^1$}-isometric imbeddings.
\newblock {\em Ann. Math.}, 60(3):383--396, 1954.

\bibitem{NocWri06}
J.~Nocedal and S.~Wright.
\newblock {\em Numerical Optimization}.
\newblock Springer, second edition, 2006.

\bibitem{PayKnoAmbVet09}
P.~Paysan, R.~Knothe, B.~Amberg, S.~Romdhani, and T.~Vetter.
\newblock A {3D} face model for pose and illumination invariant face
  recognition.
\newblock In {\em Proc. Advanced Video and Signal based Surveillance}, 2009.

\bibitem{ReRo04}
M.~Renardy and R.C. Rogers.
\newblock {\em An Introduction to Partial Differential Equations}, volume~13 of
  {\em Texts in Applied Mathematics}.
\newblock Springer, second edition, 2004.

\bibitem{RisPetSam10}
P.~Risholm, E.~Samset, and W.~Wells~III.
\newblock Bayesian estimation of deformation and elastic parameters in
  non-rigid registration.
\newblock In B.~Fischer, B.~M. Dawant, and C.~Lorenz, editors, {\em Biomedical
  Image Registration}, volume 6204 of {\em Lecture Notes in Computer Science},
  pages 104--115. Springer, 2010.

\bibitem{RuWi08}
M.~Rumpf and B.~Wirth.
\newblock A nonlinear elastic shape averaging approach.
\newblock {\em SIAM J. Imaging Sci.}, 2(3):800--833, 2009.

\bibitem{Set99}
J.~A. Sethian.
\newblock {\em Level set methods and fast marching methods}.
\newblock Cambridge University Press, second edition, 1999.

\bibitem{SidZhaMacShoBouDic08}
K.~Siddiqi, J.~Zhang, D.~Macrini, A.~Shokoufandeh, S.~Bouix, and S.~Dickinson.
\newblock Retrieving articulated {3-D} models using medial surfaces.
\newblock {\em Machine Vision and Applications}, 19(4):261--275, 2008.

\bibitem{SrSaJo09}
A.~Srivastava, C.~Samir, S.~H. Joshi, and M.~Daoudi.
\newblock Elastic shape models for face analysis using curvilinear coordinates.
\newblock {\em J. Math. Imaging. Vis.}, 33:253--265, 2009.

\bibitem{WanEtal06}
S.~Wang, Y.~Wang, M.~Jin, X.~Gu, and D.~Samaras.
\newblock 3d surface matching and recognition using conformal geometry.
\newblock In {\em 2006 IEEE Computer Society Conference on Computer Vision and
  Pattern Recognition (CVPR'06)}, volume~2, pages 2453--2460. IEEE, 2006.

\bibitem{WiScSc11}
T.~Windheuser, U.~Schlickewei, F.~R. Schmidt, and D.~Cremers.
\newblock Geometrically consistent elastic matching of {3D} shapes: A linear
  programming solution.
\newblock In {\em International Conference on Computer Vision}, pages
  2134--2141, 2011.

\bibitem{ZhaHer99}
D.~Zhang and M.~Hebert.
\newblock Harmonic maps and their applications in surface matching.
\newblock In {\em 1999 IEEE Computer Society Conference on Computer Vision and
  Pattern Recognition (CVPR'99)}, volume~2, pages 524--530. IEEE, 1999.

\end{thebibliography}

\end{document}